\def\M{\mathcal{M}}
\def\L{\mathcal{L}}
\def\D{\mathrm{D}}
\def\bzero{{\mathbf 0}}
\def\bSigma{{\mathbf \Sigma}}
\def\ker{{\mathrm{Ker}}}
\def\bOmega{{\mathbf \Omega}}
\def\ba{{\mathbf a}}
\def\be{{\mathbf e}}
\def\bw{{\mathbf w}}
\def\bx{{\mathbf x}}
\def\by{{\mathbf y}}
\def\bA{\mathbf A}
\def\bB{\mathbf B}
\def\bC{\mathbf C}
\def\bD{\mathbf D}
\def\bE{\mathbf E}
\def\bH{\mathbf H}
\def\bI{{\mathbf I}}
\def\bK{{\mathbf K}}
\def\bL{\mathbf L}
\def\bM{\mathbf M}
\def\bO{\mathbf O}
\def\bP{{\mathbf P}}
\def\bR{{\mathbf R}}
\def\bS{\mathbf S}
\def\bT{\mathbf T}
\def\bU{{\mathbf U}}
\def\bV{{\mathbf V}}
\def\bW{{\mathbf W}}
\def\bX{{\mathbf X}}
\def\bY{{\mathbf Y}}
\def\bZ{{\mathbf Z}}
\def\sR{{\mathbb R}}
\def\sS{{\mathbb S}}
\def\gC{{\mathcal{C}}}
\def\gN{{\mathcal{N}}}
\def\gP{{\mathcal{P}}}
\def\gH{{\mathcal{H}}}
\def\gV{{\mathcal{V}}}
\def\vec{{\mathrm{vec}}}
\def\grad{{\mathrm{grad}}}
\def\hess{{\mathrm{Hess}}}
\newcommand{\trace}{\mathrm{tr}}
\DeclareMathOperator*{\argmax}{arg\,max}
\DeclareMathOperator*{\argmin}{arg\,min}
\title{{Learning with Symmetric Positive Definite Matrices via Generalized Bures-Wasserstein Geometry
}}
\titlerunning{Learning with Generalized Bures-Wasserstein Geometry}
\author{Andi Han\inst{1} \and
Bamdev Mishra\inst{2} \and
Pratik Jawanpuria\inst{2} \and
Junbin Gao\inst{1}}
\institute{}
\authorrunning{}
\institute{University of Sydney \\ \email{\{andi.han,junbin.gao\}@sydney.edu.au } \and
Microsoft India\\
\email{\{bamdevm, pratik.jawanpuria\}@microsoft.com}}
\begin{document}
%
%
%
%
%
%
\maketitle              
\begin{abstract}
Learning with symmetric positive definite (SPD) matrices has many applications in machine learning. Consequently, understanding the Riemannian geometry of SPD matrices has attracted much attention lately. A particular Riemannian geometry of interest is the recently proposed Bures-Wasserstein (BW) geometry which builds on the Wasserstein distance between the Gaussian densities. In this paper, we propose a novel generalization of the BW geometry, which we call the GBW geometry. The proposed generalization is parameterized by a symmetric positive definite matrix $\bM$ such that when $\bM = \bI$, we recover the BW geometry. We provide a rigorous treatment to study various differential geometric notions on the proposed novel generalized geometry which makes it amenable to various machine learning applications. We also present experiments that illustrate the efficacy of the proposed GBW geometry over the BW geometry.

\keywords{Riemannian geometry  \and SPD matrices \and Bures-Wasserstein.}
\end{abstract}
%
%
%


\section{Introduction}
Symmetric positive definite (SPD) matrices play a fundamental role in various fields of machine learning, such as metric learning \cite{huang2015log}, signal processing \cite{brooks2019exploring}, sparse coding \cite{cherian2016riemannian,harandi2015sparse}, computer vision \cite{guillaumin2009you,mahadevan19a}, and medical imaging \cite{pennec2006riemannian,lotte2018review}, etc. The set of SPD matrices, denoted as $\sS_{++}^n$, is a subset of the Euclidean space $\sR^{n (n+1)/2}$. To measure the (dis)similarity between SPD matrices, one needs to assign a metric (an inner product structure on the tangent space) on $\sS_{++}^n$, which yields a Riemannian manifold. 
Consequently, various  Riemannian metrics have been studied such as the affine-invariant \cite{bhatia2009positive,pennec2006riemannian}, Log-Euclidean \cite{arsigny2006log}, and Log-Cholesky \cite{lin2019riemannian} metrics, and those induced from symmetric divergences  \cite{sra2012new,sra2016positive}. 
Different metrics lead to different differential structures on the SPD matrices, and therefore, picking the ``right''  one depends on the application at hand. Indeed, the choice of metric has profound effect on the performance of learning algorithms \cite{mishra2016riemannian,shustin2019preconditioned,han2021riemannian}.

The Bures-Wasserstein (BW) metric and its geometry for SPD matrices have lately gained popularity, especially in machine learning applications~\cite{bhatia2019bures,malago2018wasserstein,van2020bures} such as statistical optimal transport \cite{bhatia2019bures}, computer graphics \cite{rabin2011wasserstein}, neural sciences \cite{gramfort2015fast}, and evolutionary biology \cite{demetci2020gromov}, among others. It also connects to the theory of optimal transport and the $L_2$-Wasserstein distance between zero-centered Gaussian densities \cite{bhatia2019bures}. More recently, \cite{han2021riemannian} analyzes the BW and the affine-invariant (AI) geometries in SPD learning problems and 
compare their advantages/disadvantages in various machine learning applications. 



In this paper, we propose a natural generalization of the BW metric by scaling SPD matrices with a given parameter SPD matrix $\bM$. The introduction of $\bM$ gives flexibility to the BW metric.
Choosing $\bM$ is equivalent to choosing a suitable metric for learning tasks on SPD matrices. For example, a proper choice of $\bM$ can lead to faster convergence of algorithms for certain class of optimization problems (see more discussions in Section \ref{sec:experiments}).
Indeed, when $\bM = \bI$, the generalized metric reduces to the BW metric for SPD matrices. When $\bM = \bX$, the proposed metric coincides locally with the AI metric, i.e., around the neighbourhood of a SPD matrix $\bX$. The proposed generalized metric allows to connect the BW and AI metrics (locally) with different choices of $\bM$. The following are our contributions.
\begin{itemize}
    \item We propose a novel generalized BW (GBW) metric by generalizing the Lyapunov operation in the BW metric (Section~\ref{sec:motivations}). In addition, it can also be viewed as a generalized Procrustes distance and also as the Wasserstein distance with Mahalanobis cost metric for Gaussians. 
    
    \item The GBW metric leads to a Riemannian geometry for SPD matrices. In Section~\ref{sec:bw_geometry}, we derive various Riemannian operations like geodesics, exponential and logarithm maps, Levi-Civita connection. We show that they are also natural generalizations of operations with the BW geometry. Section~\ref{sec:applications} derives Riemannian optimization ingredients under the proposed geometry.
    
    \item In Section \ref{sec:experiments}, we show the usefulness of the GBW geometry in the applications of covariance estimation and Gaussian mixture models.
    
\end{itemize}

\section{Generalized Bures-Wasserstein metric} \label{sec:motivations}

The Bures-Wasserstein (BW) distance is defined as
\begin{equation}
    d_{\rm bw}(\bX, \bY) = \sqrt{\trace(\bX) + \trace(\bY) - 2 \trace(\bX \bY)^{1/2} }, \label{wasserstein_dist}
\end{equation}
where $\bX$ and $\bY$ are SPD matrices and $\trace(\bX)^{1/2}$ denotes the trace of the matrix square root. It has been shown in \cite{bhatia2019bures,malago2018wasserstein} that the BW distance \eqref{wasserstein_dist} induces a Riemannian metric and geometry on the manifold of SPD matrices. The BW metric that leads to the distance (\ref{wasserstein_dist}) is defined as
\begin{equation}\label{bw_metric}
    g_{\rm bw}(\bU, \bV) =   \frac{1}{2}\trace( \L_\bX[\bU] \bV ) = \frac{1}{2}\vec(\bU)^\top (\bX \otimes \bI + \bI \otimes \bX)^{-1} \vec(\bV), 
\end{equation}
where $\bU, \bV$ on $T_\bX\sS^n_{++}$ are the symmetric matrices and the Lyapunov operator $\L_\bX[\bU]$ is defined as the solution of the matrix equation $\bX \L_\bX[\bU] \allowbreak + \L_{\bX}[\bU]\bX = \bU$ for $\bU \in \sS^n$ (which is the set of symmetric matrices of size $n\times n$). Here, $\vec(\bU)$ and $\vec(\bV)$ are the vectorization of matrices $\bU$ and $\bV$, respectively, and $\otimes$ denotes the Kronecker product. 
Our proposed GBW metric generalizes (\ref{bw_metric}) and is parameterized by a given $\bM \in \sS_{++}^n$ as
\begin{equation}
\begin{array}{lll}\label{gbw_metric}
    g_{\rm gbw}(\bU, \bV) 
    = \frac{1}{2}  \trace(\L_{\bX,\bM}[\bU]\bV)
    = \frac{1}{2} \vec(\bU)^\top (\bX \otimes \bM + \bM \otimes \bX)^{-1} \vec(\bV), 
    \end{array}
\end{equation}
where $\L_{\bX,\bM}[\bU]$ is the generalized Lyapunov operator, defined as the solution to the linear matrix equation $\bX\L_{\bX,\bM}[\bU]\bM + \bM \L_{\bX,\bM}[\bU] \bX = \bU$. Similar to the special Lyapunov operator, the solution is symmetric given that $\bX, \bM \in \sS^{n}_{++}$ and $\bU \in \sS^n$. As we show later that the Riemannian distance associated with the GBW metric is derived as 
\begin{equation}\label{gbw_distance}
\begin{array}{lll}
    d_{\rm gbw}(\bX, \bY) = 
    \sqrt{ \trace(\bM^{-1}\bX) + \trace(\bM^{-1} \bY) - 2 \trace(\bX\bM^{-1}\bY\bM^{-1})^{1/2} },
    \end{array}
\end{equation}
which can be seen as the BW distance \eqref{wasserstein_dist} between $\bM^{-1/2} \bX \bM^{-1/2}$ and $\allowbreak \bM^{-1/2} \allowbreak \bY \bM^{-1/2}$. 
Note that the affine-invariant metric \cite{bhatia2009positive} is given by $g_{\rm ai}(\bU, \bV) = \vec(\bU)^\top (\bX \otimes \bX)^{-1} \vec(\bV)$.  Clearly, the proposed metric (\ref{gbw_metric}) coincides locally with the affine-invariant (AI) metric when $\bM = \bX$, i.e., around the neighbourhood of $\bX$. (Implications of this observation are discussed later in experiments.) 

Below, we show that the same GBW distance \eqref{gbw_distance} is realized under various contexts naturally. In those cases, the Euclidean norm, denoted by $\| \cdot\|_2$ is replaced with the more general Mahalanobis norm defined as $\| \bX \|_{\bM^{-1}} := \sqrt{\trace(\bX^\top \bM^{-1} \bX)}$. 


\paragraph{\textbf{Orthogonal Procrustes problem:}}
Any SPD matrix $\bX \in \sS_{++}^{n}$ can be factorized as $\bX = \bP \bP^\top$ for $\bP \in {\rm M}(n)$, the set of invertible matrices. Such a factorization is invariant under the action of the orthogonal group $O(n)$, the set of orthogonal matrices. That is, for any $\bO \in O(n)$, $\bP\bO$ is also a valid parameterization. In \cite{bhatia2019bures}, the BW distance is verified as the extreme solution of the orthogonal Procrustes problem where $\bP$ is set to be $\bX^{1/2}$, i.e., $d_{\rm bw}(\bX, \bY) = \min_{\bO \in O(n)}\| \bX^{1/2} - \bY^{1/2} \bO \|_2$. We can show that the GBW distance is obtained as the solution to the same orthogonal Procrustes problem in the Mahalanobis norm parameterized by $\bM^{-1}$.

\begin{proposition}
\label{gbw_procrustes}
$d_{\rm gbw}(\bX, \bY) = \min_{\bO \in O(n)} \| \bX^{1/2} \allowbreak- \bY^{1/2} \bO \|_{\bM^{-1}}.$
\end{proposition}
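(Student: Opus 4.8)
The plan is to reduce the right-hand side to a standard orthogonal Procrustes maximization and then apply the singular value decomposition. First I would expand the squared objective using the definition of the Mahalanobis norm,
$$\|\bX^{1/2} - \bY^{1/2}\bO\|_{\bM^{-1}}^2 = \trace\bigl((\bX^{1/2} - \bY^{1/2}\bO)^\top \bM^{-1}(\bX^{1/2} - \bY^{1/2}\bO)\bigr),$$
and multiply out the four terms. Since $\bX^{1/2}$, $\bY^{1/2}$, and $\bM^{-1}$ are symmetric and $\bO^\top\bO = \bO\bO^\top = \bI$, cyclicity of the trace collapses the first diagonal term to $\trace(\bM^{-1}\bX)$ and the last to $\trace(\bM^{-1}\bY)$, both independent of $\bO$. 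The two cross terms are transposes of one another, hence equal under the trace, so they combine to $-2\trace(\bX^{1/2}\bM^{-1}\bY^{1/2}\bO)$. Thus minimizing over $\bO$ is equivalent to \emph{maximizing} $\trace(\bA\bO)$ with $\bA := \bX^{1/2}\bM^{-1}\bY^{1/2}$.

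Next I would invoke the classical solution of the orthogonal Procrustes problem. Writing the singular value decomposition $\bA = \bU\bSigma\bV^\top$, one has $\trace(\bA\bO) = \trace(\bSigma\,\bV^\top\bO\bU)$; since $\bV^\top\bO\bU$ is orthogonal, its diagonal entries lie in $[-1,1]$, so $\trace(\bA\bO) \le \trace(\bSigma) = \sum_i \sigma_i(\bA)$, with equality attained at $\bO = \bV\bU^\top$. Therefore the maximum of the cross term equals $\trace\bigl((\bA^\top\bA)^{1/2}\bigr) = \trace\bigl((\bA\bA^\top)^{1/2}\bigr)$, the sum of the singular values of $\bA$.

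Finally I would match this with the distance formula. Since $\bA\bA^\top = \bX^{1/2}\bM^{-1}\bY\bM^{-1}\bX^{1/2}$, the attained maximum is exactly $\trace\bigl((\bX^{1/2}\bM^{-1}\bY\bM^{-1}\bX^{1/2})^{1/2}\bigr)$, the trace appearing in \eqref{gbw_distance} (the equivalent forms following from the commutativity identity already recorded in the text). Substituting the three pieces back and taking a square root gives
$$\min_{\bO \in O(n)}\|\bX^{1/2} - \bY^{1/2}\bO\|_{\bM^{-1}} = \bigl(\trace(\bM^{-1}\bX) + \trace(\bM^{-1}\bY) - 2\,\trace(\bX^{1/2}\bM^{-1}\bY\bM^{-1}\bX^{1/2})^{1/2}\bigr)^{1/2} = d_{\rm gbw}(\bX,\bY).$$

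The only genuinely nonroutine step is the Procrustes maximization: justifying that the diagonal entries of an orthogonal matrix are bounded by one (equivalently, appealing to von Neumann's trace inequality) and verifying attainability at $\bO = \bV\bU^\top$. Everything else is bookkeeping with symmetry and the cyclic property of the trace, so I expect that to be where the argument should be stated most carefully.
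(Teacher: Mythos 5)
Your proof is correct and takes essentially the same route as the paper: expand the squared Mahalanobis norm into $\trace(\bM^{-1}\bX) + \trace(\bM^{-1}\bY)$ minus twice a cross term linear in $\bO$, then solve the resulting orthogonal Procrustes maximization, whose optimal value is $\trace(\bX^{1/2}\bM^{-1}\bY\bM^{-1}\bX^{1/2})^{1/2}$. The only difference is that the paper cites \cite{bhatia2019bures} for the maximizer (the orthogonal polar factor of $\bY^{1/2}\bM^{-1}\bX^{1/2}$), whereas you prove that step inline via the SVD/von Neumann argument; this is equivalent, since your maximizer $\bV\bU^\top$ is exactly that polar factor up to the immaterial relabeling $\bO \leftrightarrow \bO^\top$ of the cross term.
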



\paragraph{\textbf{Wasserstein distance and optimal transport:}}\label{subsec:wasserstein-distance-optimal-transport}

To demonstrate the connection of the GBW distance to the Wasserstein distance, recall that the $L_2$-Wasserstein distance between two probability measures $\mu, \nu$ with finite second moments is $W^2(\mu, \nu) = \inf_{\bx \sim \mu, \by \sim \nu} \mathbb{E} \| \bx - \by \|^2_2 = \inf_{\gamma \sim \Gamma(\mu, \nu)} \int_{\sR^n \times \sR^n} \|\bx - \by \|^2_2 d\gamma(\bx, \by)$,
where $\Gamma(\mu, \nu)$ is the set of all probability measures with marginals $\mu, \nu$. It is well known that the $L_2$-Wasserstein distance between two zero-centered Gaussian distributions is equal to the BW distance between their covariance matrices~\cite{bhatia2019bures,computationalOTbook,van2020bures}. The following proposition shows that the $L_2$-Wasserstein distance between such measures with respect to a Mahalanobis cost metric (which we term as generalized Wasserstein distance) coincides with the GBW distance in \eqref{gbw_distance}. 

\begin{proposition}
\label{prop_wasserstein_dist}
Define the generalized Wasserstein distance as $\tilde{W}^2(\mu, \nu) := \inf_{\bx \sim \mu ,\by \sim \nu} \allowbreak \mathbb{E} \| \bx - \by\|^2_{\bM^{-1}}$, for any $\bM^{-1} \in \sS_{++}^n$. Suppose $\mu, \nu$ are two Gaussian measures with zero mean and covariances as $\bX, \bY \in \sS_{++}^n$ respectively. Then, we have $\tilde{W}^2(\mu, \nu) = d^2_{\rm gbw}(\bX, \bY)$.
\end{proposition}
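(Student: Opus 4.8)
The plan is to reduce the generalized Wasserstein problem to the ordinary $L_2$-Wasserstein problem between Gaussians via a linear change of variables, and then invoke the known identification of the latter with the BW distance together with the observation (stated just after \eqref{gbw_distance}) that $d_{\rm gbw}(\bX,\bY) = d_{\rm bw}(\bM^{-1/2}\bX\bM^{-1/2}, \bM^{-1/2}\bY\bM^{-1/2})$. The starting point is the algebraic identity $\| \bx - \by \|^2_{\bM^{-1}} = (\bx-\by)^\top \bM^{-1} (\bx-\by) = \| \bM^{-1/2}\bx - \bM^{-1/2}\by \|^2_2$, which converts the Mahalanobis cost into a Euclidean cost applied to the transformed variables.

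First I would introduce the invertible linear map $T(\bz) = \bM^{-1/2}\bz$ and the pushforward measures $\mu' := T_\# \mu$ and $\nu' := T_\# \nu$. Since $\mu, \nu$ are zero-mean Gaussians with covariances $\bX, \bY$, their images $\mu', \nu'$ are again zero-mean Gaussians, now with covariances $\bM^{-1/2}\bX\bM^{-1/2}$ and $\bM^{-1/2}\bY\bM^{-1/2}$, both elements of $\sS_{++}^n$ because $\bM^{-1/2}$ is invertible. Next I would argue that the map $(\bx,\by) \mapsto (T\bx, T\by)$ induces a bijection between the set of couplings $\Gamma(\mu,\nu)$ and the set of couplings $\Gamma(\mu',\nu')$, the inverse being pushforward under $(\bu,\bv)\mapsto(\bM^{1/2}\bu, \bM^{1/2}\bv)$; since $T$ is a homeomorphism this is a routine pushforward bookkeeping step that preserves the marginal constraints.

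Combining these two observations gives, for any coupling $\gamma \in \Gamma(\mu,\nu)$ with image $\gamma' \in \Gamma(\mu',\nu')$,
\[
\int \| \bx - \by \|^2_{\bM^{-1}}\, d\gamma(\bx,\by) = \int \| \bu - \bv \|^2_2\, d\gamma'(\bu,\bv),
\]
so taking infima over the two (bijectively matched) coupling sets yields $\tilde{W}^2(\mu,\nu) = W^2(\mu',\nu')$. I would then apply the standard result that the $L_2$-Wasserstein distance between zero-mean Gaussians equals the BW distance between their covariances, giving $W^2(\mu',\nu') = d^2_{\rm bw}(\bM^{-1/2}\bX\bM^{-1/2}, \bM^{-1/2}\bY\bM^{-1/2})$, and finally identify the right-hand side with $d^2_{\rm gbw}(\bX,\bY)$ using the relation recorded below \eqref{gbw_distance}.

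The main obstacle is not any hard computation but the rigorous handling of the change of variables at the level of measures: one must verify carefully that $T_\#$ maps $\Gamma(\mu,\nu)$ onto $\Gamma(\mu',\nu')$ bijectively (so that the infima over the two sets genuinely coincide, with no loss of optimal couplings), and that Gaussianity and the zero-mean property are preserved under the linear pushforward. Everything else follows by citing the BW--Wasserstein identity and the GBW/BW relation already established in the excerpt.
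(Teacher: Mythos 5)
Your proof is correct, but it takes a genuinely different route from the paper's. The paper gives a self-contained, matrix-analytic argument in the style of Bhatia et al.: it expands $\mathbb{E}\|\bx-\by\|^2_{\bM^{-1}}$, reduces the problem to maximizing $\trace(\bM^{-1}\bK^\top)$ over admissible cross-covariances $\bK$ subject to the joint covariance constraint $\bSigma \succeq \bzero$, and then bounds this via a block congruence $\bP\bSigma\bP^\top \succeq \bzero$ with $\bP = [\bA^{1/2}\ \bzero;\ \bzero\ \bA^{-1/2}\bM^{-1}]$ together with a trace Cauchy--Schwarz inequality, identifying the supremum with the quantity $\tilde{F}(\bX,\bY)$ from the preceding proposition. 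You instead push everything forward under $T(\bz) = \bM^{-1/2}\bz$, observe that couplings in $\Gamma(\mu,\nu)$ and $\Gamma(\mu',\nu')$ correspond bijectively with equal costs, and then cite the known identity $W^2 = d_{\rm bw}^2$ for zero-mean Gaussians plus the relation $d_{\rm gbw}(\bX,\bY) = d_{\rm bw}(\bM^{-1/2}\bX\bM^{-1/2}, \bM^{-1/2}\bY\bM^{-1/2})$ recorded after \eqref{gbw_distance}. Both are valid; in fact the paper itself gestures at your argument in the remark immediately following its proof (``Alternatively, the same distance is recovered by considering two scaled random Gaussian vectors\dots''), but relegates it to a side comment. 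What your route buys is brevity and modularity: the result follows from known facts by pure bookkeeping, with the only care needed being the bijection of coupling sets, which you correctly flag. What the paper's route buys is self-containedness and an intermediate variational identity, $\max_{\bK:\,\bSigma\succeq\bzero}\trace(\bM^{-1}\bK^\top) = \tilde{F}(\bX,\bY)$, which ties the result to the characterizations of $\tilde{F}$ in the preceding proposition and exposes the structure of the optimal cross-covariance --- structure that feeds directly into the subsequent derivation of the optimal transport plan in Proposition \ref{prop_ot_plan}.
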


Alternatively, the same distance is recovered by considering two scaled random Gaussian vector $\bM^{-1/2} \bx, \bM^{-1/2} \by$ under the Euclidean distance, i.e., $d^2(\bX, \bY) = \inf_{\bx \sim \mu, \by \sim \nu}\mathbb{E} \| \bM^{-1/2} \bx-\bM^{-1/2} \by \|^2_2$. For completeness, we also derive the optimal transport plan corresponding to the GBW distance in Appendix.

\section{Generalized Bures-Wasserstein Riemannian geometry}
In this section, the geometry arising from the GBW metric (\ref{gbw_metric}) is shown to have a Riemannian structure for a given $\bM \in \sS^{n}_{++}$, which we denote as $\M_{\rm gbw}$. We show the expressions of the Riemannian distance, geodesic, exponential/logarithm maps, Levi-Civita connection, sectional curvature as well as the geometric mean and barycenter. A summary of the results is presented in Table \ref{table_riem_geometry_compare}. Additionally, we discuss optimization on the SPD manifold with the proposed GBW geometry. We defer the detailed derivations discussed in this section to Appendix. 


\subsection{Differential geometric properties of GBW}
\label{sec:bw_geometry}

To derive the various expressions in Table \ref{table_riem_geometry_compare}, we provide two strategies, one is by a Riemannian submersion from the general linear group and another is by a Riemannian isometry from the BW Riemannian geometry, $\M_{\rm bw}$. These claims are formalized in Propositions \ref{prop_riem_submersion} and \ref{prop_riem_isometry} respectively.

\paragraph{\textbf{Perspective from Riemannian submersion:}}
A \textit{Riemannian submersion} \cite{lee2006riemannian} between two manifolds is a smooth surjective map where its differential restricted to the horizontal space is isometric (formally defined in Appendix). The general linear group ${\rm GL}(n)$ is the set of invertible matrices with the group action of matrix multiplication. When endowed with the standard Euclidean inner product $\langle \cdot, \cdot\rangle_2$, the group becomes a Riemannian manifold, denoted as $\M_{\rm gl}$. The proposition below introduces a Riemannian submersion from $\M_{\rm gl}$ to $\M_{\rm gbw}$. 

\begin{table}[t]
\begin{center}
{\small 
\caption{Summary of expressions for the proposed generalized Bures-Wasserstein (GBW) Riemannian geometry, which is parameterized by $\bM \in \sS_{++}^d$. 
}
\label{table_riem_geometry_compare}
\begin{tabular}{@{}p{0.14\textwidth}p{0.85\textwidth}@{}}
\toprule
Metric & $g_{\rm gbw}(\bU, \bV) =  \frac{1}{2}\trace( \L_{\bX,\bM}[\bU]\bV)$\\
\addlinespace[5pt]
Distance  & $d_{\rm gbw}^2(\bX, \bY) =  \trace(\bM^{-1}\bX) + \trace(\bM^{-1} \bY) - 2 \trace(\bX\bM^{-1}\bY\bM^{-1})^{1/2} $\\
\addlinespace[5pt]
Geodesic  & $\gamma(t) = ((1-t) \bX^{1/2} + t \bY^{1/2} \bO)((1-t) \bX^{1/2} + t \bY^{1/2} \bO)^\top$ with $\bO$ the orthogonal polar factor of $\bY^{1/2} \bM^{-1} \bX^{1/2}$.\\
\addlinespace[5pt]
Exp  &  ${\rm Exp}_{\bX} (\bU) = \bX + \bU +  \bM \L_{\bX, \bM}[\bU] \bX \L_{\bX, \bM}[\bU] \bM$\\
\addlinespace[5pt]
Log & ${\rm Log}_{\bX}(\bY) = \bM(\bM^{-1} \bX \bM^{-1} \bY)^{1/2} + (\bY \bM^{-1}\bX \bM^{-1})^{1/2}\bM - 2\bX$\\
\addlinespace[5pt]
Connection & $\nabla_\xi\eta = \D_\xi\eta + \{ \bX \L_{\bX, \bM}[\eta]\bM \L_{\bX,\bM}[\xi] \bM + \bX  \L_{\bX, \bM}[\xi]\bM \L_{\bX,\bM}[\eta] \bM  \}_{\rm S} - \{ \bM \L_{\bX, \bM}[\eta]\xi \}_{\rm S} - \{ \bM \L_{\bX, \bM}[\xi] \eta \}_{\rm S},$ where $\{\bA \}_{\rm S} \coloneqq \frac{1}{2}(\bA + \bA^\top)$.\\
\addlinespace[5pt]
Min/Max Curvature & $K_{\min}(\pi(\bP)) = 0, {\text{ and }} K_{\max}(\pi(\bP)) = \frac{3}{\sigma_n^2 + \sigma^2_{n-1}}$, where $\sigma_i$ is the $i$-th largest singular value of $\bP$, and $\pi(\bP) = \bM^{1/2} \bP \bP^\top \bM^{1/2}$. \\
\bottomrule
\end{tabular}
}
\end{center}
\end{table}

\begin{proposition}
\label{prop_riem_submersion}
The map $\pi: \M_{\rm gl} \xrightarrow{} \M_{\rm gbw}$ defined as $\pi(\bP) = \bM^{1/2} \bP \bP^\top \bM^{1/2}$ is a Riemannian submersion, for $\bP \in {\rm GL}(n)$ and $\M_{\rm gbw}$ parameterized by $\bM \in \sS_{++}^n$ as in \eqref{gbw_metric}. 
\end{proposition}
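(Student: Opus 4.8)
The plan is to verify the three defining properties of a Riemannian submersion in turn: smoothness and surjectivity of $\pi$, surjectivity of its differential, and the horizontal isometry. Since ${\rm GL}(n)$ is an open subset of $\sR^{n\times n}$, I identify $T_\bP\M_{\rm gl} \cong \sR^{n \times n}$ and $T_\bX\M_{\rm gbw} \cong \sS^n$, and I first compute the differential
$$\D\pi(\bP)[\bZ] = \bM^{1/2}\big(\bZ\bP^\top + \bP\bZ^\top\big)\bM^{1/2}, \qquad \bZ \in \sR^{n\times n}.$$
Smoothness of $\pi$ is immediate, and surjectivity onto $\sS_{++}^n$ follows because any $\bX\in\sS^n_{++}$ factors as $\bX = \bM^{1/2}\bP\bP^\top\bM^{1/2}$ by taking $\bP = (\bM^{-1/2}\bX\bM^{-1/2})^{1/2}$.

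Next I establish the submersion property and identify the vertical/horizontal split. Surjectivity of $\D\pi(\bP)$ is seen by solving $\bZ\bP^\top + \bP\bZ^\top = \bM^{-1/2}\bU\bM^{-1/2}$ for a given $\bU \in \sS^n$, e.g. $\bZ = \tfrac{1}{2}\bM^{-1/2}\bU\bM^{-1/2}\bP^{-\top}$, using invertibility of $\bP$ and $\bM$. The kernel condition $\bZ\bP^\top + \bP\bZ^\top = \bzero$ says $\bP^{-1}\bZ$ is skew-symmetric (equivalently $\bZ\bP^\top$ is skew), which identifies the vertical space $\gV_\bP = \{\bP\bOmega : \bOmega^\top = -\bOmega\}$. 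Taking the Euclidean orthogonal complement with respect to $\langle\bA,\bB\rangle_2 = \trace(\bA^\top\bB)$ — using that $\trace(\bA\bOmega)=0$ for all skew $\bOmega$ iff $\bA$ is symmetric — gives $\gH_\bP = \{\bS\bP : \bS^\top = \bS\}$, consistent with the dimension count $n(n+1)/2$.

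The crux is the isometry on the horizontal space. I parameterize horizontal vectors as $\bZ_i = \bS_i\bP$ with $\bS_i \in \sS^n$, so that $\D\pi(\bP)[\bZ_i] = \bM^{1/2}(\bS_i\bP\bP^\top + \bP\bP^\top\bS_i)\bM^{1/2} =: \bU_i$. The key step — and the one I expect to be the main obstacle — is to produce the closed form of the generalized Lyapunov operator applied to $\bU_i$. Using $\bX = \bM^{1/2}\bP\bP^\top\bM^{1/2}$, I claim $\L_{\bX,\bM}[\bU_i] = \bM^{-1/2}\bS_i\bM^{-1/2}$, which I verify directly by substituting into $\bX\L\bM + \bM\L\bX = \bU_i$ and simplifying. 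Granting this, the metric evaluates to
$$g_{\rm gbw}(\bU_1,\bU_2) = \tfrac{1}{2}\trace\big(\bM^{-1/2}\bS_1\bM^{-1/2}\,\bU_2\big) = \tfrac{1}{2}\trace\big(\bS_1(\bS_2\bP\bP^\top + \bP\bP^\top\bS_2)\big) = \trace(\bS_1\bS_2\bP\bP^\top),$$
where the last equality uses the cyclic property together with the symmetry of $\bS_1,\bS_2$ and $\bP\bP^\top$ to equate the two trace terms. Finally, the Euclidean inner product on the base gives $\langle\bZ_1,\bZ_2\rangle_2 = \trace(\bP^\top\bS_1\bS_2\bP) = \trace(\bS_1\bS_2\bP\bP^\top)$, which matches, completing the verification. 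The only genuinely nontrivial point is guessing and checking the Lyapunov solution; once that identity is in hand, both sides collapse to the same trace, and the factor $\tfrac12$ in the metric is exactly absorbed by the two equal cross terms.
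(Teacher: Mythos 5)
Your proof is correct and follows essentially the same route as the paper: identify the vertical space as the kernel of $\D\pi(\bP)$, take its Euclidean orthocomplement as the horizontal space, and verify the isometry by computing the generalized Lyapunov operator on the pushforward of horizontal vectors, after which both inner products collapse to the same trace. The only difference is cosmetic: you parameterize horizontal vectors as $\bS\bP$ while the paper writes them as $\bM^{1/2}\bS'\bM^{1/2}\bP$ (the same subspace, via $\bS = \bM^{1/2}\bS'\bM^{1/2}$), a choice that makes the identity $\L_{\pi(\bP),\bM}\big[\D\pi(\bP)[\bM^{1/2}\bS'\bM^{1/2}\bP]\big] = \bS'$ immediate from the definition of the operator, whereas your parameterization requires the (correctly executed) guess-and-check step $\L_{\pi(\bP),\bM}[\bU_i] = \bM^{-1/2}\bS_i\bM^{-1/2}$.
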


\paragraph{\textbf{Perspective from Riemannian isometry:}}
A \textit{Riemannian isometry} between two manifolds is a diffeomorphism (i.e., bijective, differentiable, and its inverse is differentiable) that pulls back the Riemannian metric from one to another \cite{lee2006riemannian}. We show in the following proposition that there exists a Riemannian isometry between the GBW and BW geometries.

\begin{proposition}
\label{prop_riem_isometry}
Define a map as $\tau(\bD) = \bM^{-1/2} \bD \bM^{-1/2}$, for $\bD \in \sS^n$. Then, the GBW metric can be written as $g_{\rm gbw, \bX}(\bU, \bV) = g_{\rm bw, \tau(\bX)}(\tau(\bU), \tau(\bV))$, where the subscript $\bX, \tau(\bX)$ indicates the tangent space. Hence, $\tau: \M_{\rm bw} \xrightarrow{} \M_{\rm gbw}$ is a Riemannian isometry.
\end{proposition}

The proofs of the results in Table \ref{table_riem_geometry_compare} are in Appendix and derived from the first perspective of Riemannian submersion, taking inspiration from the analysis in \cite{bhatia2019bures,malago2018wasserstein,massart2019curvature}. In Appendix, we also include various additional developments on the GBW geometry, such as geometric interpolation and barycenter, connection to robust Wasserstein distance and metric learning.


\subsection{Riemannian optimization with the GBW geometry}

\label{sec:applications}




\label{riemannian_opt_gbw}

Learning over SPD matrices usually concerns optimizing an objective function with respect to the parameter, which is constrained to be SPD. Riemannian optimization is an elegant approach that converts the constrained optimization into an unconstrained problem on manifolds~\cite{absil2008a,boumal2020a}. Among the metrics for the SPD matrices, the affine-invariant (AI) metric is seemingly the most popular choice for Riemannian optimization due to its efficiency and convergence guarantees. Recently, however, in~\cite{han2021riemannian}, the BW metric is shown to be a promising alternative for various learning problems. Below, we derive the expressions for Riemannian gradient and Hessian of an objective function for the GBW geometry. 

Riemannian gradient (and Hessian) are generalized gradient (and Hessian) on the tangent space of Riemannian manifolds. The expressions allow to implement various Riemannian optimization methods, using toolboxes like Manopt~\cite{boumal2014manopt}, Pymanopt~\cite{townsend16a}, ROPTLIB \cite{huang2018roptlib}, etc.

\begin{proposition}
\label{riem_grad_hess_gbw}
The Riemannian gradient and Hessian on $\M_{\rm gbw}$ is derived as $\grad f(\bX) = 2\bX \nabla f(\bX) \bM + 2\bM \nabla f(\bX) \bX$ and $\hess f(\bX)[\bU] =  4 \{ \bM \nabla^2f(\bX)[\bU] \bX \}_{\rm S} \allowbreak + 2\{ \bM \nabla f(\bX) \bU \}_{\rm S} + 4 \{ \bX \{ \nabla f(\bX) \bM \L_{\bX, \bM}[\bU] \}_{\rm S} \bM \}_{\rm S} -  \{ \bM \L_{\bX, \bM}[\bU] \grad f(\bX) \}_{\rm S}$,
where $\nabla f(\bX), \nabla^2 f(\bX)$ represent the Euclidean gradient and Hessian, respectively.
\end{proposition}

In Appendix, we discuss {\it geodesic convexity} of functions on the SPD manifold endowed with the GBW metric. It generalizes the discussion in \cite{han2021riemannian}.


\section{Experiments}\label{sec:experiments}

In this section, we perform experiments showing the benefit of the GBW geometry. The algorithms are implemented in Matlab using the Manopt toolbox~\cite{boumal2014manopt}. The codes are available on \url{https://github.com/andyjm3/GBW}.


\subsection{Log-determinant Riemannian optimization}\label{sec:log_det_optimization}

\paragraph{\textbf{Problem formulation:}} Log-determinant (log-det) optimization is common in statistical machine learning, such as for estimating the covariance, with an objective concerning $\min_{\bX \in \sS_{++}^n} f(\bX) = - \log\det(\bX)$. From \cite{han2021riemannian}, optimization with the BW geometry is less well-conditioned compared to the AI geometry. This is because the Riemannian Hessians at optimality are ${\rm Hess}_{\rm ai} f(\bX^*)[\bU] = \bU$ for the AI geometry and ${\rm Hess}_{\rm bw} f(\bX^*)[\bU] = 4 \{ (\bX^*)^{-1} \bU \}_{\rm S}$ for the BW geometry. This suggests, under the BW geometry, the condition number of Hessian at optimality depends on the solution $\bX^*$, while no dependence on $\bX^*$ under the AI geometry. Thus, this leads to a poor performance on BW geometry \cite{han2021riemannian}. 

Here, we show how the GBW geometry helps to address this issue. Specifically, with the GBW geometry, we see from Proposition \ref{riem_grad_hess_gbw} that by choosing $\bM = \bX^*$, the Riemannian Hessian is ${\rm Hess}_{\rm gbw} f(\bX^*)[\bU] = \bU$, which becomes well-conditioned (around the optimal solution). This provides the motivation for a choice of $\bM$. As the optimal solution $\bX^*$ is unknown in optimization problems, choice of $\bM$ is not trivial. In practice, one may choose $\bM = \bX$ dynamically at every or after a few iterations. This strategy corresponds to modifying the GBW geometry dynamically with iterations. 

\begin{table}[t]
\begin{center}
{\small 
\caption{Riemannian optimization ingredients for the affine-invariant (AI) and Generalized Bures-Wasserstein (GBW) with $\bM = \bX$ geometries for log-det optimization.}
\label{table_optimization_ingredient_AIGBW}
\begin{tabular}{@{}p{0.1\textwidth}p{0.42\textwidth}p{0.42\textwidth}@{}}
\toprule
 & AI & GBW (with $\bM = \bX$)\\
\midrule
Exp & ${\rm Exp}_{\bX} (\bU) = \bX \exp(\bX^{-1} \bU)$  &  ${\rm Exp}_{\bX} (\bU) = \bX + \bU + \frac{1}{4} \bU \bX^{-1} \bU$\\
\addlinespace[4pt]
Grad & $\grad f(\bX) = \bX\bC \bX -\bX$ & $\grad f(\bX) =  4 \bX \bC \bX - 4 \bX$\\
\addlinespace[4pt]
Hess & $\hess f(\bX)[\bU] = 2\bU + \{ \bU \bC \bX \}_{\rm S}$ & $\hess f(\bX)[\bU] = 2 \bU + 2\{ \bU \bC \bX\}_{\rm S} $\\
\bottomrule
\end{tabular}
}
\end{center}
\end{table}

As an example, we consider the following inverse covariance estimation problem \cite{friedman2008sparse,hsieh2011sparse} as $\min_{\bX \in \sS_{++}^n} f(\bX) = -\log\det(\bX) + \trace(\bC\bX),$ 
where $\bC \in \sS_{++}^n$ is a given SPD matrix. The Euclidean gradient $\nabla f(\bX) = -\bX^{-1} + \bC$ and the Euclidean Hessian $\nabla^2 f(\bX)[\bU] = \bX^{-1} \bU \bX^{-1}$. From the analysis in Appendix,
this problem is geodesic convex and the optimal solution is $\bX^* = \bC^{-1}$, which we seek to estimate as a direct computation is challenging for ill-conditioned $\bC$. 

Choosing $\bM = \bX$ and following derivations in Section \ref{riemannian_opt_gbw}, the expressions for the exponential map, Riemannian gradient, and Hessian under the GBW geometry are shown in Table \ref{table_optimization_ingredient_AIGBW}, where we also draw comparisons to the AI geometry. We see that the choice of $\bM = \bX$ allows GBW to locally approximate the AI geometry up to some constants. 
For example, the AI exponential map $\bX \exp(\bX^{-1} \bU)$ can by approximated by second-order terms as $\bX + \bU + \frac{1}{2} \bU \bX^{-1} \bU$. This matches the GBW expression up to an additional term $ \frac{1}{4} \bU \bX^{-1} \bU$. 
Overall, the similarity of optimization ingredients help GBW (with $\bM = \bX$) perform as similar as the AI geometry, which helps to resolve the poor performance of BW for log-det optimization problems observed in \cite{han2021riemannian}.


\begin{figure*}[t]
    \centering
    \subfigure[\texttt{\footnotesize Log-det:} \texttt{\footnotesize well-cond} ]{\includegraphics[scale=0.22]{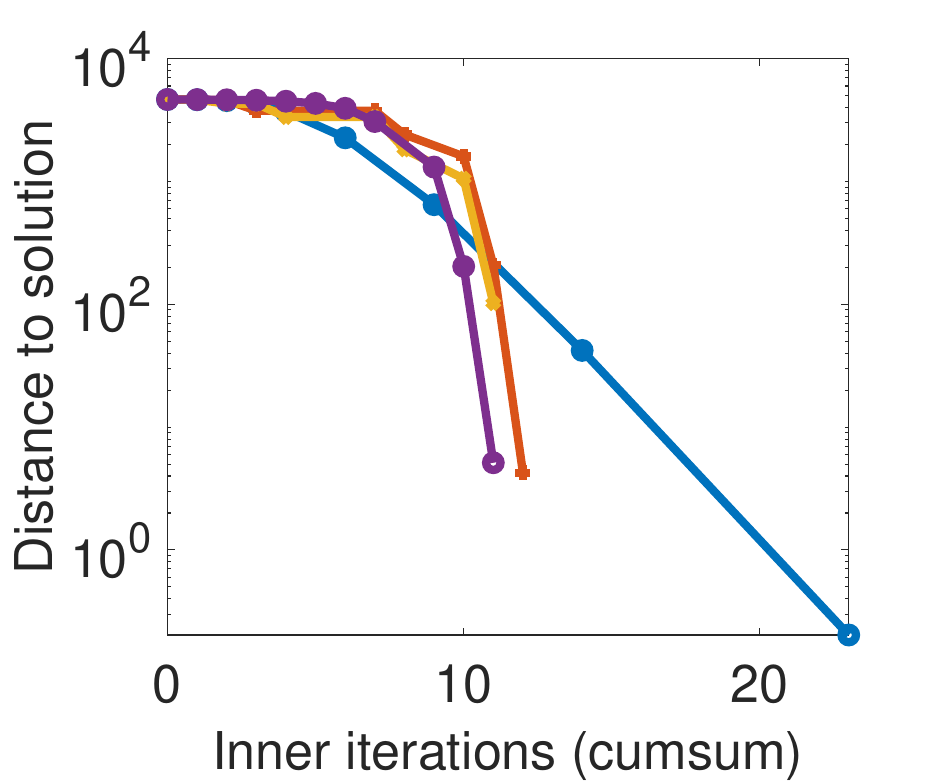}}
    \subfigure[\texttt{\footnotesize Log-det:} \texttt{\footnotesize ill-cond} ]{\includegraphics[scale=0.22]{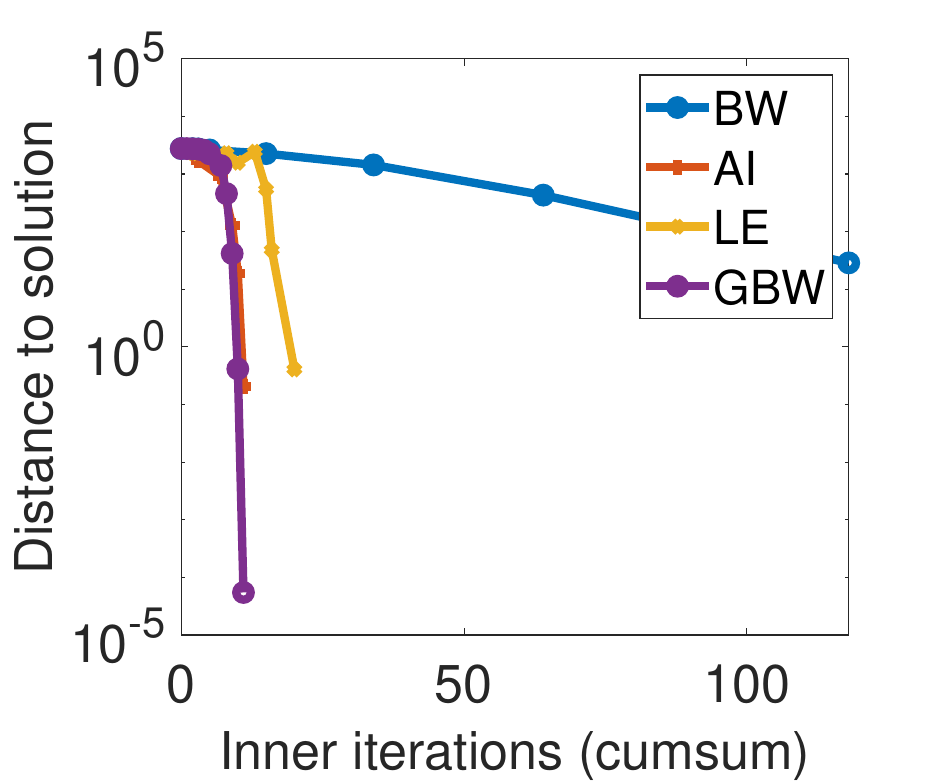}}
    \subfigure[\texttt{\footnotesize GMM:} \texttt{\footnotesize iris} ]{\includegraphics[scale=0.22]{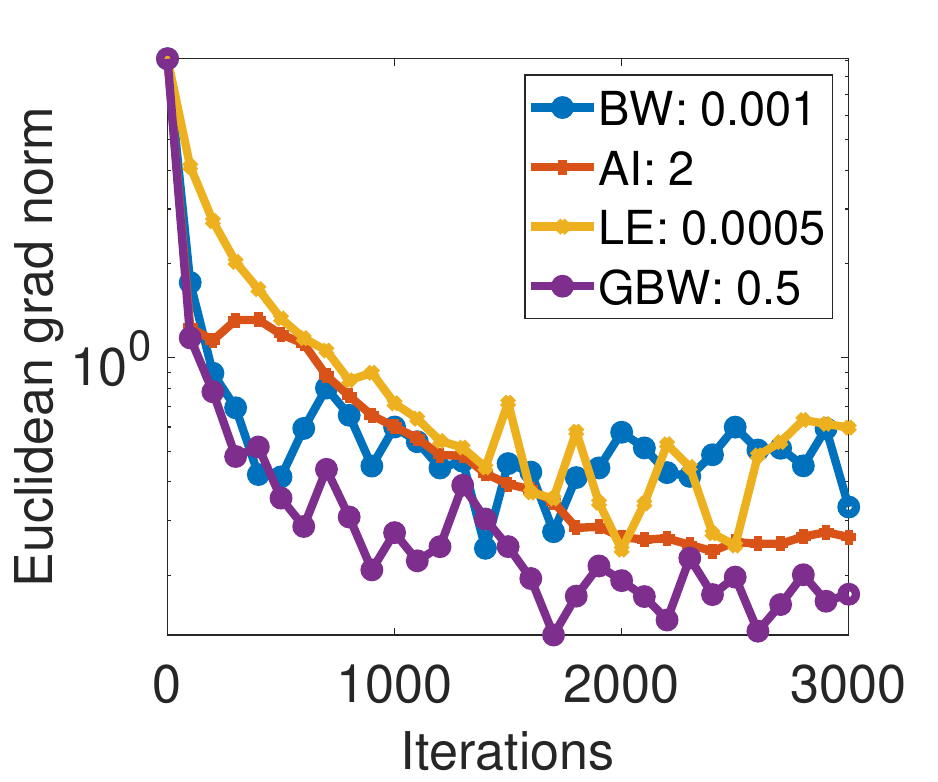}} \\[-0.12in]
    \subfigure[\texttt{\footnotesize GMM:} \texttt{\footnotesize kmeansdata} ]{\includegraphics[scale=0.22]{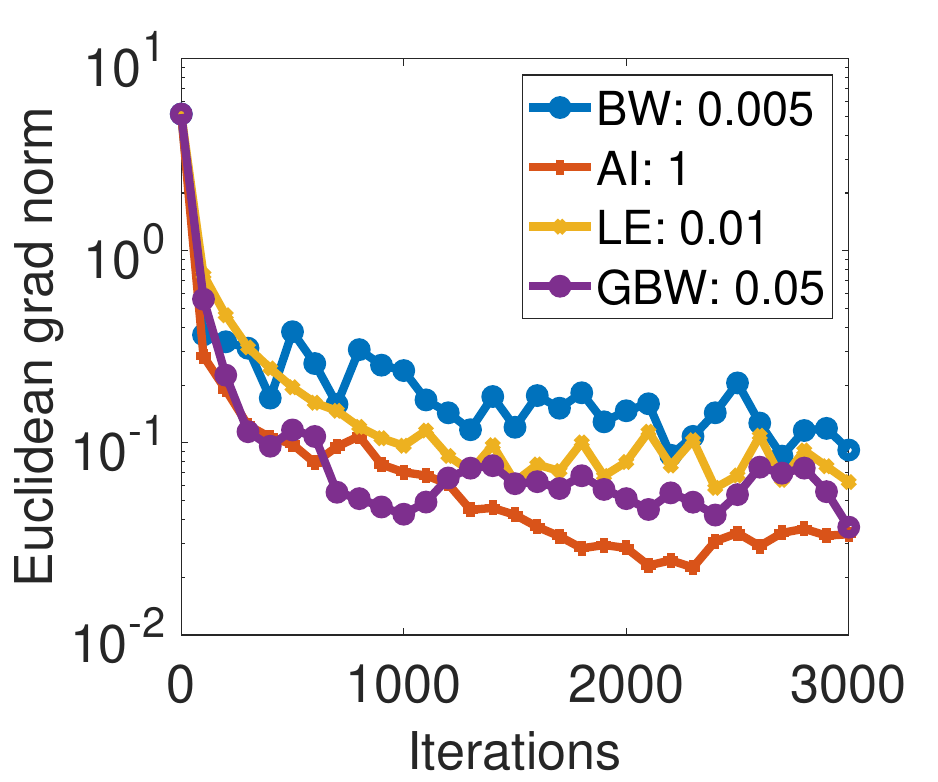}}
    \subfigure[\texttt{\footnotesize GMM:} \texttt{\footnotesize balance} ]{\includegraphics[scale=0.22]{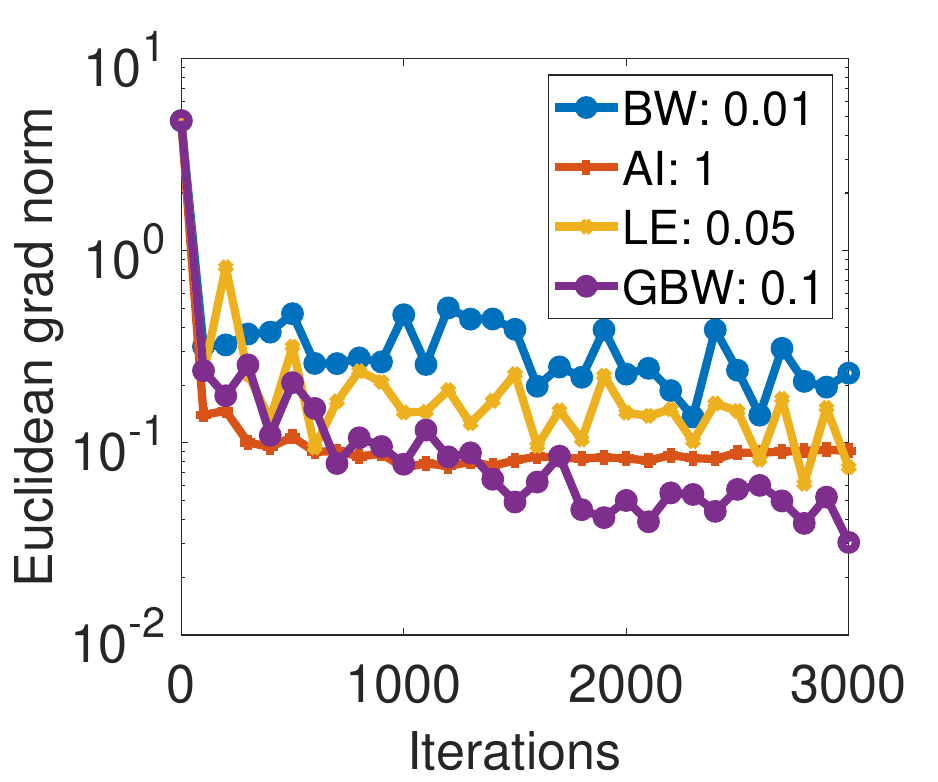}}
    \subfigure[\texttt{\footnotesize GMM:} \texttt{\footnotesize phoneme} ]{\includegraphics[scale=0.22]{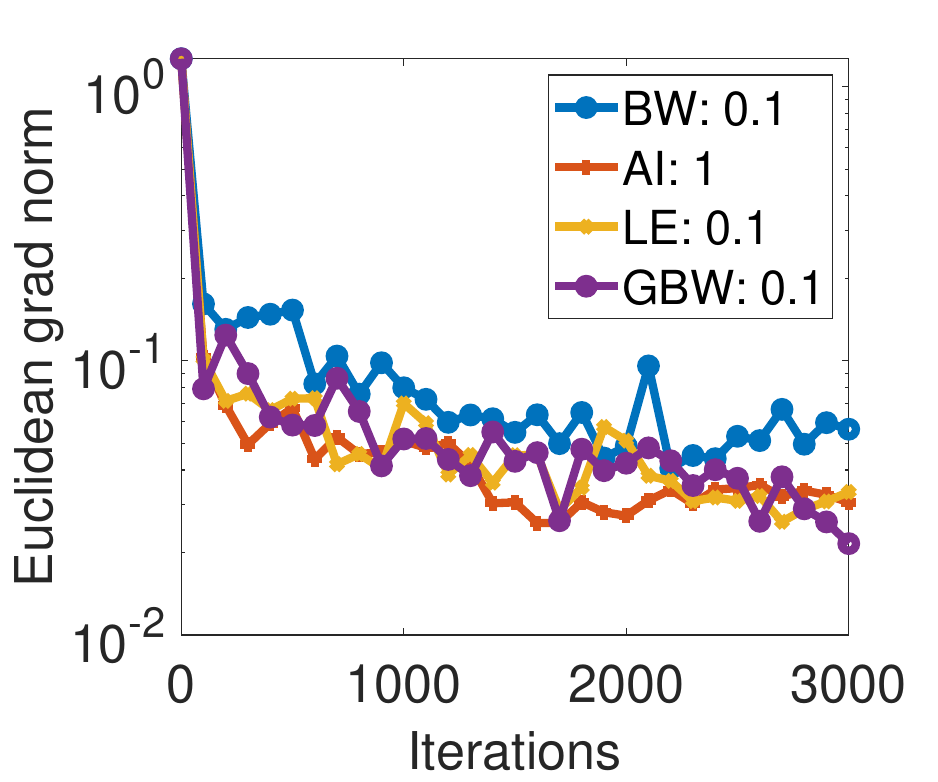}}
    \caption{Figures~(a)~\&~(b): Convergence for log-det optimization problem via Riemannian trust region algorithm. Figures~(c)-(f): Gaussian mixture model via Riemannian stochastic gradient descent algorithm with optimal initial stepsize. 
    In both the settings, the GBW algorithm outperforms the BW algorithm and performs similar to the AI algorithm. This can be attributed to the choice of $\bM$, which offers additional flexibility to the GBW modeling. 
    }
    \label{logdet_gmm_optimization_figure}
\end{figure*}


\paragraph{\textbf{Experimental setup and results:}} We follow the same settings as in \cite{han2021riemannian} to create problem instances and consider two instances where the condition number of $\bX^*$ is $10$ (well-conditioned) and $1000$ (ill-conditioned). $\bC$ is then obtained as ${(\bX^*)}^{-1}$.
To compare the convergence performance of optimization methods under the AI, LE, BW, and GBW (with $\bM = \bX$) geometries, we implement the Riemannian trust region (a second-order solver) with the considered geometries \cite{absil2008a,boumal2020a}. To measure convergence, we use the distance to (theoretical) optimal solution, i.e., $\| \bX_t - \bX^* \|_{2}$. We plot this distance against the cumulative inner iterations that the trust region method takes to solve a particular trust region sub-problem at every iteration. The inner iterations are a good measure to show convergence of trust region algorithms \cite[Chapter~7]{absil2008a}.

From Figures~\ref{logdet_gmm_optimization_figure}(a)~\&~\ref{logdet_gmm_optimization_figure}(b), we observe the faster convergence with the GBW geometry compared to other geometries regardless of the condition number. In contrast, the BW geometry performs poorly in log-determinant optimization problems as shown in~\cite{han2021riemannian}. The GBW geometry effectively resolves the convergence issues with the BW geometry for such settings. Based on our discussion earlier, we see that GBW with $\bM = \bX$ performs similar to the AI geometry. Empirically, it shows that the GBW geometry effectively bridges the gap between BW and AI geometries for optimization problems. 

\subsection{Gaussian mixture model (GMM)}
\paragraph{\textbf{Problem formulation:}} We now consider Gaussian density estimation and mixture model problem. 
Let $\bx_i \in \sR^d, i =1,...,N,$ be the given i.i.d. samples. 
Following~\cite{hosseini2020alternative}, we consider a reformulated GMM problem on augmented samples $\by_i^\top = [\bx_i^\top;1]\in \sR^{d+1}$. 
The density of a GMM is parameterized by the augmented covariance matrix $\bSigma \in \sR^{d+1}$. 
It should be noted that the log-likelihood of Gaussian is geodesic convex under the AI geometry~\cite{hosseini2020alternative} but not under the GBW geometry. However, if we define $\bS = \bSigma^{-1}$~\cite{han2021riemannian}, the reparameterized log-likelihood $p_{\gN}(\bY; \bS) = \sum_{i=1}^N \log \left( (2\pi)^{1 - d/2} \exp(1/2) \det(\bS)^{1/2} \allowbreak \exp(-\frac{1}{2} \by_i^\top \bS \by_i )\right)$ is geodesic convex on $\M_{\rm gbw}$. Similar trick was employed in~\cite{han2021riemannian} to obtained geodesic convex log-likelihood objective for GMM under the BW geometry. 
Overall, we solve the GMM problem similar as discussed in  \cite{hosseini2020alternative,han2021riemannian}.



\paragraph{\textbf{Experimental setup and results:}} We consider datasets:
\texttt{iris}, \texttt{kmeansdata}, \texttt{balance}, and \texttt{phoneme} from Matlab database and Keel database \cite{derrac2015keel}. For comparisons, we implement the Riemannian stochastic gradient descent method~\cite{bonnabel2013stochastic} as it is widely used in GMM problems~\cite{hosseini2020alternative}. The batch size is set to $50$ and we use a decaying stepsize for all the geometries~\cite{han2021riemannian}. As discussed in Section~\ref{sec:log_det_optimization}, we set $\bM = \bX$ at every iteration for optimizing under the GBW geometry. Without access to the optimal solution, the convergence is measured in terms of the Euclidean gradient norm $\| \bSigma_t \nabla L(\bSigma_t)\|_2$ for comparability across geometries. 

Figures~\ref{logdet_gmm_optimization_figure}(c)-\ref{logdet_gmm_optimization_figure}(f) show convergence along with the best selected initial stepsize. We observe that convergence under the GBW geometry is competitive and clearly outperforms the BW geometry based algorithm.


\begin{remark}
For all the experiments in this section, we simply set $\bM = \bX$. In general, $\bM$ can be learned according to the applications. We demonstrate several examples in Appendix.
\end{remark}

\section{Conclusion}
In this paper, we propose a Riemannian geometry that generalizes the recently introduced Bures-Wasserstein geometry for SPD matrices. This generalized geometry has natural connections to the orthogonal Procrustes problem as well as to the optimal transport theory, and still possesses the properties of the Bures-Wasserstein geometry (which is a special case). The new geometry is shown to be parameterized by a SPD matrix $\bM$. This offers necessary flexibility in applications. Experiments show that learning of $\bM$ leads to better modeling in applications.





\bibliographystyle{plain}

\clearpage

\appendix

\section{Additional results and proofs for Section \protect{\ref{sec:motivations}}}

\subsection{Proof of Proposition \protect{\ref{gbw_procrustes}}}
\begin{proof}[Proof of Proposition \ref{gbw_procrustes}]
First we have 
\begin{align}
    &\min_{\bO \in O(n)} \| \bX^{1/2} - \bY^{1/2} \bO \|^2_{\bM^{-1}} \nonumber\\
    &= \trace(\bM^{-1} \bX) + \trace(\bM^{-1} \bY) - 2 \max_{\bO \in O(n)} \trace(\bM^{-1} \bX^{1/2} \bO \bY^{1/2}). \label{prop1_eq1}
\end{align}
And the minimum of \eqref{prop1_eq1} is attained when $\bO$ is the orthogonal polar factor of $\bY^{1/2}\bM^{-1}\bX^{1/2}$, which is $\bO = \bY^{1/2}\bM^{-1}\bX^{1/2}(\bX^{1/2}\bM^{-1}\bY\bM^{-1}\bX^{1/2})^{-1/2}$, as proved in \cite{bhatia2019bures}. Substituting the expression of $\bO$ in \eqref{prop1_eq1} completes the proof.
\end{proof}

\subsection{Proof of Proposition \protect{\ref{prop_wasserstein_dist}}}

Before we proceed to prove Proposition \ref{prop_wasserstein_dist}, we provide an essential lemma, which generalizes \cite[Theorem~2]{bhatia2019bures}.

\begin{lemma}
Define $\tilde{F}(\bX,\bY) = \trace(\bX^{1/2} \bM^{-1} \bY \bM^{-1} \bX^{1/2})^{1/2}$. Then for any $\bX, \bY \in \sS_{++}^n$, 
\begin{enumerate}[1)]
    \item $\tilde{F}(\bX, \bY) = \min_{\bA \in \sS_{++}^n} \frac{1}{2} \trace( \bX\bA + \bM^{-1}\bY \bM^{-1}\bA^{-1} )$.
    \item $\tilde{F}(\bX, \bY) = \min_{\bA \in \sS_{++}^n} \sqrt{\trace(\bX \bA) \trace(\bM^{-1}\bY \bM^{-1}\bA^{-1})}$. 
\end{enumerate}
\end{lemma}
\begin{proof}
Following \cite{bhatia2019bures}, the proof proceeds by analyzing the first-order stationary conditions, where we replace $\bY$ with $\bM^{-1} \bY \bM^{-1}$. 
\end{proof}

\begin{proof}[Proof of Proposition \protect{\ref{prop_wasserstein_dist}}]
We have $\bX = \mathbb{E}[\bx\bx^\top]$ and $\bY = \mathbb{E}[\by\by^\top]$. 
\begin{align*}
    \tilde{W}^2(\mu, \nu) &= \inf_{\bx \sim \mu, \by \sim \nu} \mathbb{E}[ \bx^\top \bM^{-1} \bx + \by^\top \bM^{-1} \by - 2\bx^\top \bM^{-1} \by ] \\
    &= \inf_{\bx \sim \mu, \by \sim \nu} \trace(\bM^{-1} \bX) + \trace(\bM^{-1} \bY) - 2\trace(\bM^{-1} \mathbb{E}[\by\bx^\top] ) ] \\
    &= \trace(\bM^{-1} \bX) + \trace(\bM^{-1} \bY) - \sup_{\bK: \bSigma \succeq \bzero} 2\trace(\bM^{-1} \bK^\top),
\end{align*}
where $\bK$ is the covariance between $\bx, \by$ such that the joint covariance matrix 
\begin{equation*}
    \bSigma =\mathbb{E} \begin{bmatrix} \bx\bx^\top & \bx\by^\top \\ \bx\by^\top & \by\by^\top \end{bmatrix} =  \begin{bmatrix} \bX & \bK \\ \bK^\top & \bY \end{bmatrix} \succeq \bzero.
\end{equation*}
Two necessary and sufficient conditions for $\bSigma \succeq \bzero$ are (i) $\bX \succeq \bK \bY^{-1} \bK^\top$ and (ii) $\bK = \bX^{1/2}\bC\bY^{1/2}$ for some contraction $\bC$, i.e., $\| \bC \|_2 \leq 1$ \cite{bhatia2009positive}. Hence, $\trace(\bK) \leq \| \bX^{1/2} \|_{2} \| \bY^{1/2} \|_{2} = \sqrt{\trace(\bX) \trace(\bY)}$. Also, for any $\bA \in \sS_{++}^n$, the block diagonal matrix 
\begin{equation*}
    \bP = \begin{bmatrix}\bA^{1/2} & \bzero \\ \bzero & \bA^{-1/2} \bM^{-1} \end{bmatrix} \in {\rm M}(2n)
\end{equation*}
Then
\begin{align*}
    \bP \begin{bmatrix} \bX & \bK \\ \bK^\top &\bY  \end{bmatrix}     \bP^\top = 
    \begin{bmatrix}
    \bA^{1/2} \bX \bA^{1/2} & \bA^{1/2} \bK \bM^{-1}\bA^{-1/2} \\ \bA^{-1/2} \bM^{-1} \bK^\top \bA^{1/2} &
\bA^{-1/2} \bM^{-1}\bY \bM^{-1} \bA^{-1/2}  \end{bmatrix} \succeq \bzero.
\end{align*}
This leads to 
\begin{align*}
    \trace(\bM^{-1} \bK^\top) &= \trace(\bA^{-1/2} \bM^{-1} \bK^\top \bA^{1/2}) \\
    &\leq \sqrt{\trace(\bA^{1/2} \bX \bA^{1/2}) \trace(\bA^{-1/2} \bM^{-1}\bY \bM^{-1} \bA^{-1/2})} \\
    &= \sqrt{\trace(\bX\bA) \trace(\bM^{-1}\bY \bM^{-1}\bA^{-1})}.
\end{align*}
Hence choosing $\bK = (\bX \bY)^{1/2}$, we can show $\bK \bY^{-1} \bK^\top = \bX$ \cite{bhatia2019bures} and
\begin{equation*}
    \max_{\bK:\bSigma \succeq \bzero} \trace(\bM^{-1} \bK^\top) = \tilde{F}(\bX, \bY) = \min_{\bA \in \sS_{++}^n}\sqrt{\trace(\bX\bA) \trace(\bM^{-1}\bY \bM^{-1}\bA^{-1})}.
\end{equation*}
It thus follows that $\tilde{W}^2(\mu, \nu) = d^2_{\rm gbw}(\bX, \bY)$.
\end{proof}

\subsection{Additional results: optimal transport plan for GBW distance}

Next, we derive the optimal transport plan corresponding to the generalized Wasserstein distance as follows, where we use the notation $\bA \# \bB$ to represent the matrix geometric mean under the affine-invariant metric, i.e., $\bA \# \bB = \bA^{1/2} (\bA^{-1/2}\bB \bA^{-1/2})^{1/2} \bA^{1/2} = \bA (\bA^{-1}\bB)^{1/2} = (\bA \bB^{-1})^{1/2} \bB$.

\begin{proposition}
\label{prop_ot_plan}
Let $\bx, \by \in \sR^n$ be random Gaussian vectors with zero mean and covariance matrices $\bX, \bY \in \sS_{++}^n$ respectively. The optimal transport plan from $\bx$ to $\by$ under the Mahalanobis distance is $\bT = \bM(\bX^{-1} \# (\bM^{-1} \bY \bM^{-1}))$.
\end{proposition}

\begin{proof}[Proof of Proposition \ref{prop_ot_plan}]
For any $\bP \in {\rm M}(n)$ as a transport plan,
\begin{align}
    &\mathbb{E}\| \bM^{-1/2}\bx - \bP \bM^{-1/2}\bx \|^2_2 \nonumber\\
    &= \trace(\bM^{-1}\bX) + \trace(\bP \bM^{-1/2} \bX \bM^{-1/2} \bP^\top ) - 2\trace(\bX^{1/2} \bM^{-1/2} \bP \bM^{-1/2} \bX^{1/2}). \label{main_prop_4}
\end{align}
By comparing \eqref{main_prop_4} to $d^2_{\rm gbw}(\bX, \bY)$, we set 
\begin{equation*}
    \bX^{1/2} \bM^{-1/2} \bP \bM^{-1/2} \bX^{1/2} = (\bX^{1/2} \bM^{-1} \bY \bM^{-1} \bX^{1/2})^{1/2}
\end{equation*}
which gives an expression of $\bP$ as 
\begin{align}
    \bP &= \bM^{1/2} \bX^{-1/2} (\bX^{1/2} \bM^{-1} \bY \bM^{-1} \bX^{1/2})^{1/2} \bX^{-1/2} \bM^{1/2} \nonumber\\
    &= \bM^{1/2} \big( \bX^{-1}\#(\bM^{-1} \bY \bM^{-1}) \big) \bM^{1/2}. \label{transport_P}
\end{align}
From this result, we have
\begin{align*}
    &\trace( \bP \bM^{-1/2} \bX \bM^{-1/2} \bP^\top) \\
    &= \trace \Big( \bM^{1/2} \big( \bX^{-1}\#(\bM^{-1} \bY \bM^{-1}) \big) \bX \big( \bX^{-1}\#(\bM^{-1} \bY \bM^{-1}) \big) \bM^{1/2} \Big) \\
    &= \trace \Big( \bM^{1/2} \bX^{-1} (\bX \bM^{-1} \bY \bM^{-1})^{1/2} \bX \bX^{-1} (\bX \bM^{-1} \bY \bM^{-1})^{1/2} \bM^{1/2} \Big) \\
    &= \trace(\bM^{-1} \bY),
\end{align*}
where we use the property of matrix geometric mean. This suggests the definition of $\bP$ is the optimal transport map under the Euclidean distance. Combining \eqref{transport_P} with \eqref{main_prop_4} shows
\begin{align*}
    &\mathbb{E}\| \bM^{-1/2} \bx - \bM^{1/2} \big( \bX^{-1}\#(\bM^{-1} \bY \bM^{-1}) \big) \bx \|^2_2 \\
    = \,&\mathbb{E} \| \bx - \bM \big( \bX^{-1} \# (\bM^{-1} \bY \bM^{-1}) \big) \bx \|_{\bM^{-1}}^2  \\
    = \,&d^2_{\rm gbw}(\bX, \bY).
\end{align*}
We can, thus, define the transport plan as $\bT_{\bX \xrightarrow{} \bY} := \bM \big( \bX^{-1} \# (\bM^{-1} \bY \bM^{-1}) \big)$ and denote $\by = \bT_{\bX \xrightarrow{} \bY} \,\bx$, which is a Gaussian random vector with covariance 
\begin{equation*}
    \bT_{\bX \xrightarrow{} \bY} \bX \bT_{\bX \xrightarrow{} \bY}^\top = \bM \big( \bX^{-1} \# (\bM^{-1} \bY \bM^{-1}) \big) \bX \big( \bX^{-1} \# (\bM^{-1} \bY \bM^{-1}) \big) \bM = \bY.
\end{equation*}
Thus, $\mathbb{E}\| \bx - \by \|^2_{\bM^{-1}} = d^2_{\rm gbw}(\bX, \bY)$ where $\by = \bT_{\bX \xrightarrow{} \bY} \bx$. From Proposition \ref{prop_wasserstein_dist}, we see $\bT_{\bX \xrightarrow{} \bY}$ is the {optimal transport plan} from $\bX$ to $\bY$ under the Mahalanobis distance.
\end{proof}

\section{Additional results and proofs for Section \protect{\ref{sec:bw_geometry}}}

\subsection{Riemannian distance, geodesics, exponential map, and logarithm map}

\begin{proposition}
\label{dist_prop}
The Riemannian distance on $\M_{\rm gbw}$ is derived as $d_{\rm gbw}(\bX, \bY) = ( \trace(\bM^{-1}\bX) + \trace(\bM^{-1} \bY) - 2 \trace(\bX\bM^{-1}\bY\bM^{-1})^{1/2})^{1/2}$.
\end{proposition}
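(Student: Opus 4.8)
The plan is to invoke Theorem~\ref{R_dist_theorem} together with the Riemannian submersion $\pi(\bP) = \bM^{1/2}\bP\bP^\top\bM^{1/2}$ established in Proposition~\ref{prop_riem_submersion}. By that theorem, the Riemannian distance on $\M_{\rm gbw}$ equals the pushforward of the Euclidean distance on $\M_{\rm gl}$,
\[
d_{\rm gbw}(\bX,\bY) = \inf_{\bP \in \pi^{-1}(\bX),\ \bQ \in \pi^{-1}(\bY)} d_{\M_{\rm gl}}(\bP,\bQ),
\]
so the task reduces to three sub-steps: (i) computing $d_{\M_{\rm gl}}$ explicitly, (ii) describing the fibers $\pi^{-1}(\bX)$ and $\pi^{-1}(\bY)$, and (iii) evaluating the resulting infimum.

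For (i), since $\M_{\rm gl} = {\rm GL}(n)$ carries the flat metric inherited from $\sR^{n\times n}$, I would argue that its Riemannian distance coincides with the ambient Frobenius distance, $d_{\M_{\rm gl}}(\bP,\bQ) = \|\bP-\bQ\|_2$. For (ii), the equation $\pi(\bP) = \bX$ is equivalent to $\bP\bP^\top = \bM^{-1/2}\bX\bM^{-1/2} =: \tilde{\bX}$, whose invertible solutions are exactly $\bP = \tilde{\bX}^{1/2}\bO$ for $\bO \in O(n)$; hence $\pi^{-1}(\bX) = \{\tilde{\bX}^{1/2}\bO : \bO \in O(n)\}$, and likewise $\pi^{-1}(\bY) = \{\tilde{\bY}^{1/2}\bO : \bO \in O(n)\}$ with $\tilde{\bY} := \bM^{-1/2}\bY\bM^{-1/2}$. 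Substituting these and using the right-invariance of the Frobenius norm under $O(n)$ to absorb one of the two orthogonal factors, the double infimum collapses to a single orthogonal Procrustes problem,
\[
d_{\rm gbw}(\bX,\bY) = \min_{\bO \in O(n)} \| \tilde{\bX}^{1/2} - \tilde{\bY}^{1/2}\bO \|_2,
\]
with the minimum attained by compactness of $O(n)$.

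This is precisely the Procrustes characterization of the standard BW distance between $\tilde{\bX}$ and $\tilde{\bY}$, whose optimal value is known from \cite{bhatia2019bures} to be $\trace(\tilde{\bX}) + \trace(\tilde{\bY}) - 2\trace(\tilde{\bX}^{1/2}\tilde{\bY}\tilde{\bX}^{1/2})^{1/2}$. To finish, I would translate back to $\bX,\bY,\bM$: the first two terms give $\trace(\bM^{-1}\bX)$ and $\trace(\bM^{-1}\bY)$, while for the cross term I would use that $\tilde{\bX}^{1/2}\tilde{\bY}\tilde{\bX}^{1/2}$ is similar to $\tilde{\bX}\tilde{\bY} = \bM^{-1/2}\bX\bM^{-1}\bY\bM^{-1/2}$, which is in turn similar to $\bX\bM^{-1}\bY\bM^{-1}$ and to $\bX^{1/2}\bM^{-1}\bY\bM^{-1}\bX^{1/2}$. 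Since the trace of the matrix square root depends only on the eigenvalues, all of these share the same value by the identities already noted below \eqref{gbw_distance}, yielding the stated expression for $d_{\rm gbw}(\bX,\bY)$.

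I expect the main obstacle to be sub-step (i): justifying that the intrinsic Riemannian distance on ${\rm GL}(n)$ equals the ambient Frobenius distance. The subtlety is that the straight-line segment joining $\bP$ and $\bQ$ may pass through singular matrices and thereby leave ${\rm GL}(n)$, so a priori the intrinsic distance could be strictly larger. The resolution is that the singular matrices form a measure-zero (codimension-one) algebraic set, so any such segment can be perturbed to a nearby curve lying entirely in ${\rm GL}(n)$ whose length is arbitrarily close to $\|\bP - \bQ\|_2$; passing to the infimum over admissible paths recovers exactly the Frobenius distance. The remaining steps are routine given Proposition~\ref{prop_riem_submersion} and the Procrustes identity already cited.
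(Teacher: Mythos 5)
Your route is structurally the same as the paper's: invoke Theorem~\ref{R_dist_theorem} for the submersion of Proposition~\ref{prop_riem_submersion}, parameterize the fibers by orthogonal factors, collapse the double infimum to an orthogonal Procrustes problem, and quote its solution from \cite{bhatia2019bures}; working with the whitened matrices $\tilde{\bX}=\bM^{-1/2}\bX\bM^{-1/2}$, $\tilde{\bY}=\bM^{-1/2}\bY\bM^{-1/2}$ is only cosmetically different from the paper's representatives $\bM^{-1/2}\bX^{1/2}\bOmega$. The genuine problem is your resolution of what you correctly identify as the main obstacle. The claim that the intrinsic Riemannian distance of ${\rm GL}(n)$ with the flat metric equals the ambient Frobenius distance is \emph{false}, and the measure-zero/codimension-one perturbation argument is exactly what fails: a codimension-one hypersurface \emph{separates} $\sR^{n\times n}$, so it cannot be dodged by small perturbations. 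Concretely, ${\rm GL}(n)$ has two connected components; for $\bP=\bI$ and $\bQ=\mathrm{diag}(-1,1,\dots,1)$ every continuous path between them must cross $\{\det=0\}$ (the determinant changes sign), so the intrinsic distance is $+\infty$ while $\|\bP-\bQ\|_2=2$. Even inside ${\rm GL}^+(n)$ the issue persists: for $\bP=\mathrm{diag}(1,\epsilon)$, $\bQ=\mathrm{diag}(-\epsilon,-1)$ the straight segment has determinant $-(1-\epsilon)^2/4<0$ at its midpoint, so all nearby perturbations of it still exit ${\rm GL}(n)$, and no "length arbitrarily close to $\|\bP-\bQ\|_2$" conclusion can be drawn this way.

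What actually closes the gap — and what the paper silently relies on here (it is made explicit only in Proposition~\ref{proposition_geodesic}) — is a two-sided argument that never needs the false identity for \emph{all} pairs. First, the flat-metric length of any curve in ${\rm GL}(n)$ dominates the Frobenius distance of its endpoints, so $d_{\rm gl}(\bP,\bQ)\geq\|\bP-\bQ\|_2$ and hence the pushforward distance is at least the Procrustes value. Second, at the Procrustes-\emph{optimal} pair the straight segment does lie in ${\rm GL}(n)$: with $\bO$ the orthogonal polar factor of $\tilde{\bY}^{1/2}\tilde{\bX}^{1/2}$ one has (Lemma~\ref{lemma_U} with $\bM=\bI$ applied to the whitened matrices) $\tilde{\bY}^{1/2}\bO=(\tilde{\bY}\#\tilde{\bX}^{-1})\tilde{\bX}^{1/2}$ with $\tilde{\bY}\#\tilde{\bX}^{-1}\succ 0$, so
\begin{equation*}
(1-t)\tilde{\bX}^{1/2}+t\,\tilde{\bY}^{1/2}\bO=\bigl((1-t)\bI+t\,(\tilde{\bY}\#\tilde{\bX}^{-1})\bigr)\tilde{\bX}^{1/2}
\end{equation*}
is invertible for all $t\in[0,1]$; its length equals $\|\tilde{\bX}^{1/2}-\tilde{\bY}^{1/2}\bO\|_2$, giving $d_{\rm gl}\leq\|\cdot\|_2$ at that particular pair. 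The two bounds force the infimum over the fibers of $d_{\rm gl}$ to equal the Procrustes value, and the remainder of your argument (fiber description, reduction to the BW Procrustes problem for $\tilde{\bX},\tilde{\bY}$, and the trace/similarity identities) then goes through as written.
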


\begin{proposition}
\label{proposition_geodesic}
A geodesic on $\M_{\rm gbw}$ between any $\bX, \bY \in \sS_{++}^n$ is given by $\gamma(t) = (\pi \circ c)(t)$, where $c(t) = (1-t)\bM^{-1/2}\bX^{1/2} + t\bM^{-1/2}\bY^{1/2}\bO$. Here, $\bO$ is the orthogonal polar factor of $\bY^{1/2}\bM^{-1}\bX^{1/2}$.  
\end{proposition}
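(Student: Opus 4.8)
The plan is to apply Theorem~\ref{theorem_geodesic} to the Riemannian submersion $\pi$ of Proposition~\ref{prop_riem_submersion}: a geodesic of the total space $\M_{\rm gl}$ whose initial velocity is horizontal projects to a geodesic of $\M_{\rm gbw}$. Since $\M_{\rm gl}$ carries the flat metric $\langle\cdot,\cdot\rangle_2$, its geodesics are affine curves, and $c(t) = (1-t)\bM^{-1/2}\bX^{1/2} + t\bM^{-1/2}\bY^{1/2}\bO$ is affine in $t$; hence $c$ is a geodesic of $\M_{\rm gl}$ (one checks the segment stays in ${\rm GL}(n)$ on $[0,1]$, using the optimality of the polar factor $\bO$). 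It then remains to (i) confirm the endpoints of $\pi\circ c$ and (ii) show that $c'(0)$ is horizontal at $c(0)$.

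Step (i) is a direct computation. At $t=0$, $c(0)=\bM^{-1/2}\bX^{1/2}$ gives $\pi(c(0)) = \bM^{1/2}(\bM^{-1/2}\bX^{1/2})(\bX^{1/2}\bM^{-1/2})\bM^{1/2} = \bX$, and at $t=1$, $c(1)=\bM^{-1/2}\bY^{1/2}\bO$ with $\bO\bO^\top = \bI$ gives $\pi(c(1)) = \bY^{1/2}\bO\bO^\top\bY^{1/2} = \bY$. Expanding $\pi(c(t))$ for general $t$ reproduces the closed form $((1-t)\bX^{1/2}+t\bY^{1/2}\bO)((1-t)\bX^{1/2}+t\bY^{1/2}\bO)^\top$ listed in Table~\ref{table_riem_geometry_compare}.

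Step (ii) is the crux, and where I expect the real work. Differentiating gives $c'(0) = \bM^{-1/2}(\bY^{1/2}\bO - \bX^{1/2})$. By the characterization \eqref{horizontal_space} of the horizontal space at $\bP = c(0) = \bM^{-1/2}\bX^{1/2}$, a horizontal vector has the form $\bM^{1/2}\bS\bM^{1/2}\bP = \bM^{1/2}\bS\bX^{1/2}$ with $\bS\in\sS^n$. Equating this to $c'(0)$ forces $\bS = \bM^{-1}\bY^{1/2}\bO\bX^{-1/2} - \bM^{-1}$, so horizontality holds iff $\bM^{-1}\bY^{1/2}\bO\bX^{-1/2}$ is symmetric. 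Substituting the form \eqref{Lemma_1_inter} of Lemma~\ref{lemma_U}, namely $\bO = \bY^{1/2}(\bM^{-1}\bX\bM^{-1}\bY)^{-1/2}\bM^{-1}\bX^{1/2}$, collapses this product to $\bM^{-1}\bY(\bM^{-1}\bX\bM^{-1}\bY)^{-1/2}\bM^{-1}$.

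It thus remains to prove that the latter matrix is symmetric. Taking the transpose and using $((\bM^{-1}\bX\bM^{-1}\bY)^{-1/2})^\top = (\bY\bM^{-1}\bX\bM^{-1})^{-1/2}$, the symmetry reduces to the intertwining identity $\bY(\bM^{-1}\bX\bM^{-1}\bY)^{-1/2} = (\bY\bM^{-1}\bX\bM^{-1})^{-1/2}\bY$. This follows because $\bY(\bM^{-1}\bX\bM^{-1}\bY) = (\bY\bM^{-1}\bX\bM^{-1})\bY$, so $\bY$ intertwines $\bM^{-1}\bX\bM^{-1}\bY$ with $\bY\bM^{-1}\bX\bM^{-1}$, and therefore intertwines any primary matrix function of the two, in particular the inverse square root. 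With $c'(0)$ shown to be horizontal, Theorem~\ref{theorem_geodesic} yields that $\gamma = \pi\circ c$ is a geodesic of $\M_{\rm gbw}$ joining $\bX$ and $\bY$, completing the argument.
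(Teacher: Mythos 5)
Your proposal is correct and follows essentially the same route as the paper: push a straight-line segment through the submersion $\pi$, check the endpoints, and use Lemma~\ref{lemma_U} to verify that $c'(0)$ has the form $\bM^{1/2}\bS\bM^{1/2}c(0)$ with $\bS$ symmetric, then invoke Theorem~\ref{theorem_geodesic}. The only presentational difference is that the paper certifies symmetry by recognizing $\bY^{1/2}\bO\bX^{-1/2}\bM$ as the geometric mean $\bY\#(\bM\bX^{-1}\bM)$ --- which simultaneously gives positive definiteness and hence the ${\rm GL}(n)$ membership of $c(t)$ that you only sketched in a parenthetical --- whereas you prove symmetry directly via the intertwining property of primary matrix functions, a step that is equally valid.
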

The geodesic in Proposition \ref{proposition_geodesic} can be simplified as 
\begin{align}
    \gamma(t) = \psi(t) \psi(t)^\top =  \big((1-t)\bX^{1/2} + t\bY^{1/2} \bO \big) \big( (1-t)\bX^{1/2} + t\bY^{1/2} \bO \big)^\top, \label{geodesic_simplify_bw}
\end{align}
which coincides with the geodesic of the BW geometry except that $\bO$ is now the orthogonal polar factor of $\bY^{1/2} \bM^{-1} \bX^{1/2}$ rather than $\bY^{1/2} \bX^{1/2}$ as for BW. 

\begin{proposition}
\label{prop_exp_log_map}
The Riemannian exponential map associated with the generalized BW metric is ${\rm Exp}_{\bX}(t\bU) = \bX + t\bU + t^2 \bM \L_{\bX, \bM}[\bU] \bX \L_{\bX, \bM}[\bU] \bM$. The neighbourhood $\mathcal{X} := \{ \bM + t \bM \L_{\bX, \bM}[\bU] \bM \in \sS_{++}^n  \}$ is a totally normal neighbourhood where exponential map is a diffeomorphism with logarithm map ${\rm Log}_{\bX}(\bY) = \bM(\bM^{-1} \bX \bM^{-1} \bY)^{1/2} + (\bY \bM^{-1}\bX \bM^{-1})^{1/2}\bM - 2\bX$. 
\end{proposition}
We remark that the exponential map is only invertible in the neighbourhood $\mathcal{X}$, where $t$ is chosen sufficiently small. This makes $\M_{\rm gbw}$ a geodesic incomplete manifold, similar to $\M_{\rm bw}$ \cite{malago2018wasserstein}.

\subsection{Levi-Civita connection and sectional curvature}

The Levi-Civita connection (Levi-Civita derivative) of a vector field on manifold $\M$ is the unique covariant derivative that satisfies (1) torsion-free property, and (2) metric compatibility (formal definitions are provided as supplementary). Let $\mathfrak{X}(\M)$ be the space of vector fields on the Riemannian manifold $(\M, g)$ and denote $\{ \bA\}_{\rm S} := (\bA + \bA)/2$, for $\bA \in \sR^{n \times n}$.

\begin{proposition}
\label{riem_connection}
The Levi-Civita connection with the GBW geometry is $\nabla_\xi \eta = \D_\xi\eta + \{ \bX \L_{\bX, \bM}[\eta]\bM \L_{\bX,\bM}[\xi] \bM + \bX  \L_{\bX, \bM}[\xi]\bM \L_{\bX,\bM}[\eta] \bM  \}_{\rm S} - \{ \bM \L_{\bX, \bM}[\eta]\xi \}_{\rm S} - \{ \bM \L_{\bX, \bM}[\xi] \eta \}_{\rm S}$, for any $\xi, \eta \in \mathfrak{X}(\M)$.
\end{proposition}

Sectional curvature measures the curvature locally around a point $\bX$, which is defined geometrically as the Gaussian curvature of a $2$-dimensional subspace of $T_{\bX}\M_{\rm gbw}$.

\begin{proposition}
\label{prop_sect_curv}
Let $U, V \in \mathfrak{X}(\M_{\rm gbw})$ be two (linearly independent) vector fields. Let $\tilde{U}(\bP) = \bM^{1/2} \L_{\bM, \pi(\bP)}[U(\pi(\bP))] \bM^{1/2} \bP$, for $\bP \in \M_{\rm gl}$ and similarly for $\tilde{V}$. Suppose $\tilde{U}(\bP)$, $\tilde{V}(\bP)$ are orthonormal on $T_\bP \M_{\rm gl}$. Then, the sectional curvature of the subspace spanned by $U(\pi(\bP)),\allowbreak V(\pi(\bP))$ is
\begin{equation*}
    K(U(\pi(\bP)), V(\pi(\bP))) = \sum_{i,j} \frac{3 \bC_{ij}^2}{\sigma^2_j(\sigma_i \sigma_j^{-1} + \sigma_i^{-1} \sigma_j)^2},
\end{equation*}
where $\bC = \bV^\top (\tilde{V}(\bP)^\top \tilde{U}(\bP) - \tilde{U}(\bP)^\top \tilde{V}(\bP)) \bV$ and the singular value decomposition gives $\bP = \bU \bSigma \bV^\top$ with $\bSigma = {\rm diag}(\sigma_1, \ldots, \sigma_n)$, $\sigma_1 \geq \cdots  \geq \sigma_n$.
\end{proposition}

The sectional curvature of GBW geometry is shown to be non-negative, regardless of the choice of $\bM$. The bounds are shown below.

\begin{proposition}
\label{prop_bounds_sect_curvature}
Under the same settings as Proposition \ref{prop_sect_curv}, the minimum sectional curvature is zero and the maximum is $3/(\sigma_n^2 + \sigma_{n-1}^2)$.
\end{proposition}

Although sharing the minimum curvature with the BW geometry \cite{massart2019curvature}, the maximum curvature of GBW geometry is affected by the choice of $\bM$ from the definition of Riemannian submersion $\pi$ defined in Proposition \ref{prop_riem_submersion}.

\subsection{Geometric interpolation and barycenter}

With the proposed GBW geometry, we are interested to study the properties of interpolation between two or more SPD matrices on the manifold. This has implications in various applications, such as diffusion tensor imaging (DTI) \cite{pennec2006riemannian,bhatia2009positive}. 

First, we show that the geodesic interpolation between $\bX, \bY \in \M_{\rm gbw}$ satisfies an operator inequality (shown in supplementary), which is $\gamma(t) \preceq (1-t) \bX + t \bY$, where $\gamma(0) = \bX$, $\gamma(1) = \bY$ and $\preceq$ denotes the L\"owner partial order. One immediate implication is $\log\det(\gamma(t)) \leq \log\det((1-t) \bX + t\bY)$ that suggests a smaller swelling effect compared to the Euclidean interpolation for DTI applications. See more discussions in the supplementary. 

We further study the interpolation for multiple SPD matrices $\{\bX_{l}\}_{l = 1}^N$, also known as the barycenter problem on $\M_{\rm gbw}$, i.e., $\min_{\bA \in \sS_{++}^n } \sum_{l=1}^N w_l d^2_{\rm gbw}(\bX_l, \bA)$, where the weights $\sum_l w_l = 1$. We show in the supplementary that there exists a unique solution to the problem and provide a fixed point iteration to compute the barycenter \cite{bhatia2019bures}.

\subsection{Proof of Proposition \protect{\ref{prop_riem_submersion}}}

First we recall a \textit{smooth submersion} is a smooth map $\pi: (\M, g) \xrightarrow{} (\gN, h)$ from Riemannian manifold $(\M, g)$ to $(\gN,h)$ such that its differential $\D \pi(x): T_x\M \xrightarrow{} T_{\pi(x)}\gN$ is surjective for any $x \in \M$. Every tangent space $T_x\M$ can be decomposed as $T_x\M = \gV_x \oplus \gH_x = \ker(\D\pi(x)) \oplus \ker(\D\pi(x))^\perp$, where $\ker(f)$ denotes the kernel of a map and $\oplus$ is the direct sum. We respectively call $\gV_x, \gH_x$ as the vertical and horizontal subspaces. The map $\pi$ is called a \textit{Riemannian submersion} if it is a smooth submersion and its differential restricted to the horizontal space, $\D\pi(x): \gH_x \xrightarrow{} T_{\pi(x)}\gN$ is isometric for any $x \in \M$, i.e. $g_x(u,v) = h_{\pi(x)}( \D\pi(x)[u], \D\pi(x)[v] )$.

\begin{proof}[Proof of Proposition \ref{prop_riem_submersion}]
Note that the tangent space of $\M_{\rm gl}$, $T_\bP \M_{\rm gl}$, is the space of $\sR^{n \times n}$. 
The differential of $\pi(\bP)$ in the direction $\bU \in \sR^{n \times n}$ is given by $\D\pi(\bP)[\bU] = \bM^{1/2} \bU \bP^\top \bM^{1/2} + \bM^{1/2} \bP\bU^\top \bM^{1/2}$. We then derive the kernel of $\D\pi(\bP)$ (vertical space $\gV_{\bP}$) and the orthogonal complement of the kernel (horizontal space $\gH_{\bP}$) as 
\begin{align}
    \ker(\D\pi(\bP)) &= \{ \bU: \D\pi(\bP)[\bU] = \bzero \} \nonumber\\
    &= \{ \bU = \bM^{-1/2} \bK \bM^{-1/2} \bP^{-\top}: \bK \text{ is skew-symmetric}\}, \label{vertical_space}\\
    \ker(\D\pi(\bP))^\perp &= \{ \bV : \trace(\bV^\top\bM^{-1/2} \bK \bM^{-1/2} \bP^{-\top} ) = \bzero \} \nonumber\\
    &= \{ \bV = \bM^{1/2} \bS \bM^{1/2} \bP : \bS \in \sS^n \}. \label{horizontal_space}
\end{align}
It is clear that $\pi$ is a smooth submersion. Now, we only need to verify that it also satisfies the isometry property. For any $\bS, \bH \in \sS^n$, $\bM^{1/2} \bS \bM^{1/2} \bP, \bM^{1/2} \bH \bM^{1/2} \bP \in \gH_\bP$, and
\begin{align*}
    \D \pi(\bP)[\bM^{1/2} \bS \bM^{1/2} \bP] &= \bM \bS \bM^{1/2} \bP\bP^\top \bM^{1/2} + \bM^{1/2} \bP\bP^\top \bM^{1/2} \bS \bM \\
    &= \bM \bS \pi(\bP) + \pi(\bP) \bS \bM \\
    \D \pi(\bP)[\bM^{1/2} \bH \bM^{1/2} \bP] &= \bM \bH \bM^{1/2} \bP\bP^\top \bM^{1/2} + \bM^{1/2} \bP\bP^\top \bM^{1/2} \bH \bM \\
    &= \bM \bH \pi(\bP) + \pi(\bP) \bH \bM.
\end{align*}
The inner product at $\pi(\bP)$ is given by 
\begin{align*}
    &\langle \D \pi(\bP)[\bM^{1/2} \bS \bM^{1/2} \bP], \D \pi(\bP)[\bM^{1/2} \bH \bM^{1/2} \bP]\rangle_{\rm gbw} \\
    = &\frac{1}{2}\trace(\L_{\pi(\bP), \bM}[\D \pi(\bP)[\bM^{1/2} \bS \bM^{1/2} \bP]] \D \pi(\bP)[\bM^{1/2} \bH \bM^{1/2} \bP]) \\
    = &\frac{1}{2} \trace(\bS \bM \bH \pi(\bP) + \bS \pi(\bP)\bH \bM ) = \trace(\pi(\bP)\bS \bM \bH),
\end{align*}
where the last equality is because $\bS, \bM, \bH, \pi(\bP)$ are all symmetric. The inner product at $\bP$ is given by 
\begin{align*}
    \langle \bM^{1/2} \bS \bM^{1/2} \bP, \bM^{1/2} \bH \bM^{1/2} \bP\rangle_{2} = \trace(\bP^\top \bM^{1/2}\bS\bM \bH \bM^{1/2}\bP) = \trace(\pi(\bP) \bS\bM \bH).
\end{align*}
This shows for any $\tilde{\bS}, \tilde{\bH} \in \gH_\bP$, $\langle \tilde{\bS}, \tilde{\bH} \rangle_{2} = \langle \D\pi(\bP)[\tilde{\bS}], \D\pi(\bP)[\tilde{\bH}] \rangle_{\rm gbw}$, thereby completing the proof.
\end{proof}

\subsection{Proof of Proposition \protect{\ref{prop_riem_isometry}}}

\begin{proof}[Proof of Proposition \ref{prop_riem_isometry}]
Given for any $\bX \in \sS_{++}^n$, $\tau: \sS_{++}^n \xrightarrow{} \sS_{++}^n$ is a diffeomorphism, it is thus suffices to show $g_{\rm gbw, \bX}(\bU, \bV) = g_{\rm bw, \tau(\bX)}(\tau(\bU), \tau(\bV))$. That is,
\begin{align*}
    g_{\rm gbw, \bX}(\bU,\bV) &= \frac{1}{2} \trace( \L_{\bX, \bM}[\bU] \bV ) = \frac{1}{2} \trace( \bM^{-1/2} \L_{\tau(\bX)}[\tau(\bU)] \bM^{-1/2} \bV) \\
    &=  \frac{1}{2} \trace( \L_{\tau(\bX)}[\tau(\bU)] \tau(\bV)) = g_{\rm bw, \tau(\bX)}(\tau(\bU), \tau(\bV)),
\end{align*}
where we use the definition of the Lyapunov operator.
\end{proof}

\subsection{Proof of Proposition \protect{\ref{dist_prop}}}

First we provide a Theorem that shows the pushforward distance from a Riemannian submersion is the Riemannian distance.

\begin{theorem}[Riemannian distance induced from Riemannian submersion  \cite{van2020bures}]
\label{R_dist_theorem}
Consider $\pi: (\M, g) \xrightarrow{} (\gN,h)$ as a Riemannian submersion. Let $d_\M$ be the Riemannian distance on $(\M, g)$ and the pushforward distance $d_\gN(p,q) = \inf_{u \in \pi^{-1}(p), v \in \pi^{-1}(q)} d_\M(u,v)$ is equal to the Riemannian distance.
\end{theorem}

We now proceed to derive the distance expression.

\begin{proof}[Proof of Proposition \ref{dist_prop}]
From the definition of $\pi$ and Theorem \ref{R_dist_theorem}, we have for any $\bX, \bY \in \sS^n_{++}$, 
\begin{align*}
    d_{\rm gbw}^2(\bX, \bY) &= \inf_{\bOmega, \bR \in O(n)} d_{\rm gl}^2(\bM^{-1/2}\bX^{1/2}\bOmega, \bM^{-1/2}\bY^{1/2}\bR)\\
    &= \inf_{\bOmega, \bR \in O(n)} \|\bM^{-1/2}\bX^{1/2}\bOmega -  \bM^{-1/2}\bY^{1/2}\bR \|^2_{2} \\
    &= \trace(\bM^{-1}\bX) + \trace(\bM^{-1} \bY) - 2 \sup_{\bOmega, \bR \in O(n)} \trace(\bM^{-1} \bX^{1/2} \bOmega \bR^\top \bY^{1/2}) \\
    &= \trace(\bM^{-1}\bX) + \trace(\bM^{-1} \bY) - 2 \sup_{\bO \in O(n)} \trace(\bM^{-1} \bX^{1/2} \bO \bY^{1/2}).
\end{align*}
The supremum is attained when $\bO = \bY^{1/2}\bM^{-1}\bX^{1/2}(\bX^{1/2}\bM^{-1}\bY\bM^{-1}\bX^{1/2})^{-1/2}$ as in Proposition \ref{gbw_procrustes}. This completes the proof.
\end{proof}

Here we also verify that the second-order approximation of the GBW distance recovers the proposed Riemannian metric in \eqref{gbw_metric}.

\begin{proposition}
\label{prop_second_approximation_GBW}
The GBW distance is approximated as $d^2_{\rm gbw}(\bX, \bX+\theta \bH) = \frac{\theta^2}{2} \trace(\L_{\bX,\bM}[\bH] \bH) + o(\theta^2).$
\end{proposition}

\begin{proof}[Proof of Proposition \ref{prop_second_approximation_GBW}]
 For $\bX \in \sS^n_{++}$ and $\bH \in \sS^n$ such that $\bX \pm \bH \in \sS_{++}^n$. Thus, for $\theta \in [-1, 1]$, $\bX +\theta \bH \in \sS^n_{++}$ and
 \begin{align*}
     d^2_{\rm gbw}(\bX, \bX + \theta \bH) = &\,2\trace(\bM^{-1}\bX) + \theta \trace(\bM^{-1}\bH) \\
     &- 2\trace(\bX^{1/2} \bM^{-1}\bX \bM^{-1} \bX^{1/2} + \theta \bX^{1/2} \bM^{-1} \bH \bM^{-1} \bX^{1/2})^{1/2}
 \end{align*}
The first-order derivative is 
\begin{align*}
    &\frac{d}{d\theta} d^2_{\rm gbw}(\bX, \bX+ \theta \bH) \\
    &= \trace \Big(\bM^{-1} \bH \\
    &\quad- 2\L_{(\bX^{1/2}\bM^{-1}\bX \bM^{-1}\bX^{1/2} + \theta \bX^{1/2} \bM^{-1} \bH \bM^{-1} \bX^{1/2} )^{1/2}}[\bX^{1/2} \bM^{-1} \bH \bM^{-1}\bX^{1/2}] \Big) \\
    &= \trace \Big(\bM^{-1} \bH \\
    &\quad- (\bX^{1/2}\bM^{-1}\bX \bM^{-1}\bX^{1/2} + \theta \bX^{1/2} \bM^{-1} \bH \bM^{-1} \bX^{1/2} )^{-1/2} \bX^{1/2} \bM^{-1} \bH \bM^{-1}\bX^{1/2} \Big),
\end{align*}
where we use the properties of standard Lyapunov operator, $\D_{\bV}(\bX)^{1/2} = \L_{\bX^{1/2}}[\bV]$ and $\trace(\L_{\bX}[\bU]) = \frac{1}{2}\trace(\bX^{-1}\bU)$. Notice that 
\begin{align*}
    &\frac{d}{d\theta} d^2_{\rm gbw}(\bX, \bX+ \theta \bH)|_{\theta = 0} \\
    &= \trace(\bM^{-1}\bH) - \trace \Big(\bM^{-1}\bX^{1/2} (\bX^{1/2} \bM^{-1} \bX \bM^{-1} \bX^{1/2})^{-1/2} \bX^{1/2} \bM^{-1} \bH \Big) \\
    &= \trace(\bM^{-1}\bH) - \trace\Big( (\bM^{-1}\bX \bM^{-1} \bX)^{-1/2} \bM^{-1} \bX \bM^{-1} \bH \Big) = 0,
\end{align*}
where the second equality is from \eqref{Lemma_1_inter}. The second order derivative is 
\begin{align*}
    &\frac{d^2}{d\theta^2}  d^2_{\rm gbw}(\bX, \bX+ \theta \bH)\\
    &= - \trace \Big( \frac{d}{d\theta} \big(\bX^{1/2}\bM^{-1}\bX \bM^{-1}\bX^{1/2} + \theta \bX^{1/2} \bM^{-1} \bH \bM^{-1} \bX^{1/2} \big)^{-1/2} \times \\
    &\qquad\qquad \qquad\bX^{1/2} \bM^{-1} \bH \bM^{-1}\bX^{1/2} \Big) \\
    &= \trace( \frac{d}{d\theta} (-C^{-1/2}) \bX^{1/2} \bM^{-1} \bH \bM^{-1}\bX^{1/2} ),
\end{align*}
where we let $\bC = \bX^{1/2}\bM^{-1}\bX \bM^{-1}\bX^{1/2} + \theta \bX^{1/2} \bM^{-1} \bH \bM^{-1} \bX^{1/2}$. Then,
\begin{align*}
    \frac{d}{d\theta} (-\bC^{-1/2}) = \bC^{-1/2} \L_{C^{1/2}}[\bX^{1/2}\bM^{-1} \bH \bM^{-1} \bX^{1/2}] \bC^{-1/2}.
\end{align*}
Thus,
\begin{align*}
    &\frac{d^2}{d\theta^2} d^2_{\rm gbw}(\bX, \bX+\theta \bH)|_{\theta =0} \\
    = &\trace( (\bX^{1/2}\bM^{-1}\bX \bM^{-1}\bX^{1/2})^{-1/2} \L_{(\bX^{1/2}\bM^{-1}\bX \bM^{-1}\bX^{1/2})^{1/2}}[\bX^{1/2}\bM^{-1}\bH \bM^{-1}\bX^{1/2}] \times \\ &(\bX^{1/2}\bM^{-1}\bX \bM^{-1}\bX^{1/2})^{-1/2} \bX^{1/2}\bM^{-1}\bH \bM^{-1}\bX^{1/2} ) \\
    = &\trace( \bX^{-1/2} \L_{(\bX^{1/2}\bM^{-1}\bX \bM^{-1}\bX^{1/2})^{1/2}}[\bX^{1/2}\bM^{-1}\bH \bM^{-1}\bX^{1/2}] \bX^{-1/2} \bH ).
\end{align*}
Notice, similarly from \eqref{Lemma_1_inter},
\begin{align*}
    (\bX^{1/2}\bM^{-1}\bX \bM^{-1}\bX^{1/2})^{1/2} \bX^{-1/2}\bM = \bX^{-1/2}\bM(\bX^{-1}\bM \bX^{-1} \bM)^{-1/2} &= \bX^{1/2}, \text{ and } \\
    \bM\bX^{-1/2}(\bX^{1/2}\bM^{-1}\bX \bM^{-1}\bX^{1/2})^{1/2} &= \bX^{1/2}
\end{align*}
Let $\bL := \L_{(\bX^{1/2}\bM^{-1}\bX \bM^{-1}\bX^{1/2})^{1/2}}[\bX^{1/2}\bM^{-1}\bH \bM^{-1}\bX^{1/2}]$. Then,
\begin{align*}
\bH &= \bM\bX^{-1/2}\bL (\bX^{1/2}\bM^{-1}\bX \bM^{-1}\bX^{1/2})^{1/2} \bX^{-1/2}\bM \\
&\quad+ \bM \bX^{-1/2}(\bX^{1/2}\bM^{-1}\bX \bM^{-1}\bX^{1/2})^{1/2} \bL\bX^{-1/2} \bM \\
&= \bM\bX^{-1/2} \bL \bX^{1/2} + \bX^{1/2} \bL \bX^{-1/2}\bM \\
&= \bM\bX^{-1/2} \bL \bX^{-1/2} \bX + \bX \bX^{-1/2} \bL \bX^{-1/2} \bM.
\end{align*}
Thus, $\L_{\bX,\bM}[\bH] = \bX^{-1/2}\bL\bX^{-1/2}$ and $\frac{d^2}{d\theta^2} d^2_{\rm gbw}(\bX, \bX+\theta \bH)|_{\theta =0} = \trace(\L_{\bX,\bM}[\bH] \bH)$. This completes the proof.
\end{proof}

\subsection{An important lemma regarding the polar factor}
The next lemma studies the various expressions of the polar factor $\bO$, which is used throughout the proofs in the rest of the paper. 

\begin{lemma}
\label{lemma_U}
Consider $\bO$ as defined in the proof of Proposition \eqref{dist_prop}, then 
\begin{equation*}
    \bO = \bY^{1/2} (\bY^{-1} \bM \bX^{-1}\bM)^{1/2} \bM^{-1} \bX^{1/2} = \bY^{-1/2} (\bY \# (\bM\bX^{-1}\bM)) \bM^{-1} \bX^{1/2}.
\end{equation*}
\end{lemma}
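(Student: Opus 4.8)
The plan is to treat $\bO$ as the orthogonal polar factor of $\bB := \bY^{1/2}\bM^{-1}\bX^{1/2}$ and to push the factors around with a single elementary tool: the intertwining identity for matrix functions. Concretely, for square matrices $\bC,\bD$ and any $f$ analytic on a neighbourhood of the common nonzero spectrum of $\bC\bD$ and $\bD\bC$, one has $\bC\,f(\bD\bC)=f(\bC\bD)\,\bC$; this follows from $\bC(\bD\bC)^k=(\bC\bD)^k\bC$ and extends to $f(t)=t^{\pm 1/2}$ because every product I intertwine is similar to an SPD matrix and hence has positive spectrum. I would first record that $\bB^\top\bB=\bX^{1/2}\bM^{-1}\bY\bM^{-1}\bX^{1/2}$, so that the matrix $\bO$ of Proposition~\ref{dist_prop} is exactly $\bO=\bB(\bB^\top\bB)^{-1/2}$.

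Applying the identity with $f(t)=t^{-1/2}$ moves the factor across, giving $\bO=(\bB\bB^\top)^{-1/2}\bB$ with $\bB\bB^\top=\bY^{1/2}\bM^{-1}\bX\bM^{-1}\bY^{1/2}$. Inverting this SPD product as $(\bY^{1/2}\bM^{-1}\bX\bM^{-1}\bY^{1/2})^{-1}=\bY^{-1/2}\bM\bX^{-1}\bM\bY^{-1/2}$ turns the expression into $\bO=(\bY^{-1/2}\bM\bX^{-1}\bM\bY^{-1/2})^{1/2}\,\bY^{1/2}\bM^{-1}\bX^{1/2}$. Writing $\bA:=\bY^{-1/2}$ and $\bS:=\bM\bX^{-1}\bM$, a second application of the identity (now $f(t)=t^{1/2}$, $\bC=\bA$, $\bD=\bA\bS$, so $\bD\bC=\bA\bS\bA$ and $\bC\bD=\bA^2\bS$) yields $(\bA\bS\bA)^{1/2}\bA^{-1}=\bA^{-1}(\bA^2\bS)^{1/2}$, i.e.\ $(\bY^{-1/2}\bM\bX^{-1}\bM\bY^{-1/2})^{1/2}\bY^{1/2}=\bY^{1/2}(\bY^{-1}\bM\bX^{-1}\bM)^{1/2}$. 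Substituting back produces the first claimed form $\bO=\bY^{1/2}(\bY^{-1}\bM\bX^{-1}\bM)^{1/2}\bM^{-1}\bX^{1/2}$.

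The second equality is then immediate from the definition of the matrix geometric mean recorded before Proposition~\ref{prop_ot_plan}, namely $\bA\#\bB=\bA(\bA^{-1}\bB)^{1/2}$: taking the two arguments to be $\bY$ and $\bM\bX^{-1}\bM$ gives $\bY\#(\bM\bX^{-1}\bM)=\bY(\bY^{-1}\bM\bX^{-1}\bM)^{1/2}$, hence $\bY^{-1/2}\big(\bY\#(\bM\bX^{-1}\bM)\big)=\bY^{1/2}(\bY^{-1}\bM\bX^{-1}\bM)^{1/2}$, which matches the first form after appending $\bM^{-1}\bX^{1/2}$. The only genuine subtlety — and the step I would be most careful about — is justifying the intertwining identity for the (inverse) square root rather than merely for polynomials, and checking at each use that the two intertwined products have spectrum on the positive half-line so that $t^{\pm 1/2}$ is analytic there; once that is secured the remainder is just bookkeeping of the non-commuting $\bM$ factors.
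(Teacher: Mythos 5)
Your proof is correct, and it is a close cousin of the paper's argument rather than a carbon copy: both hinge on pushing invertible factors through principal (inverse) square roots, justified by positivity of the relevant spectra, followed by the same geometric-mean bookkeeping. The difference is in the mechanics. The paper works with $\bO = \bB(\bB^\top\bB)^{-1/2}$ directly, inserts the identity $\bX^{-1/2}\bM\bM^{-1}\bX^{1/2}$ on the right, and establishes the single similarity relation $\bM^{-1}\bX^{1/2}(\bX^{1/2}\bM^{-1}\bY\bM^{-1}\bX^{1/2})^{-1/2}\bX^{-1/2}\bM = (\bM^{-1}\bX\bM^{-1}\bY)^{-1/2}$ by an ad hoc squaring argument (setting $\bC = (\bX^{1/2}\bM^{-1}\bY\bM^{-1}\bX^{1/2})^{-1/2}$ and reading off the square root of the conjugated product), which lands on the intermediate form $\bY^{1/2}(\bM^{-1}\bX\bM^{-1}\bY)^{-1/2}\bM^{-1}\bX^{1/2}$. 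You instead first flip to the symmetric polar form $(\bB\bB^\top)^{-1/2}\bB$ via the intertwining identity, invert the SPD Gram matrix $\bB\bB^\top$, and then apply the intertwining identity a second time to move $\bY^{1/2}$ through the square root. Your packaging of the key step as the general lemma $\bC\,f(\bD\bC)=f(\bC\bD)\,\bC$ (with the explicit spectral check that makes $f(t)=t^{\pm 1/2}$ admissible) is cleaner and reusable — indeed the paper's squaring trick is exactly this lemma in disguise, with the uniqueness of the positive-spectrum square root left implicit — while the paper's version is more self-contained, needing no appeal to holomorphic functional calculus. Both routes are sound and of comparable length.
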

\begin{proof}
From the definition of $\bO$,
  \begin{align}
     \bO &= \bY^{1/2}\bM^{-1}\bX^{1/2}(\bX^{1/2}\bM^{-1}\bY\bM^{-1}\bX^{1/2})^{-1/2} \nonumber\\
     &=\bY^{1/2}\bM^{-1}\bX^{1/2} (\bX^{1/2}\bM^{-1}\bY\bM^{-1}\bX^{1/2} )^{-1/2} \bX^{-1/2} \bM \bM^{-1} \bX^{1/2} \nonumber\\
     &= \bY^{1/2} (\bM^{-1} \bX \bM^{-1} \bY )^{-1/2} \bM^{-1} \bX^{1/2} \label{Lemma_1_inter}\\
     &= \bY^{1/2} (\bY^{-1} \bM \bX^{-1}\bM)^{1/2} \bM^{-1} \bX^{1/2}, \nonumber\\
     &= \bY^{-1/2} \bY \# (\bM\bX^{-1}\bM) \bM^{-1} \bX^{1/2}, \label{important_inter_lemma}
\end{align}
where \eqref{Lemma_1_inter} is proved as follows. Denote $\bC = (\bX^{1/2}\bM^{-1}\bY\bM^{-1}\bX^{1/2} )^{-1/2}$ and we have
\begin{align*}
     \bI &= \bC \bX^{1/2}\bM^{-1}\bY\bM^{-1}\bX^{1/2} \bC \\
     &= (\bM^{-1} \bX^{1/2} \bC \bX^{-1/2 }\bM) \bM^{-1} \bX \bM^{-1} \bY (\bM^{-1} \bX^{1/2} \bC \bX^{-1/2} \bM). 
\end{align*}
Thus, $\bM^{-1} \bX^{1/2} \bC \bX^{-1/2} \bM = (\bM^{-1} \bX \bM^{-1} \bY )^{-1/2}$.
\end{proof}

\subsection{Poof of Proposition \protect{\ref{proposition_geodesic}}}

To derive the geodesic expression, we need the following well-known theorem. 
\begin{theorem}[Geodesic induced from Riemannian submersion \protect{\cite{bhatia2019bures,lee2018introduction}}]
\label{theorem_geodesic}
Consider $\pi: (\M, g) \xrightarrow{} (\gN,h)$ as a Riemannian submersion. Let $c$ be a geodesic on $(\M, g)$ with $c'(0)$ is horizontal. Then, we have
\begin{enumerate}[(\arabic*)]
    \item $c'(t)$ is horizontal for all $t$. 
    \item $\gamma := \pi \circ c$ is a geodesic on $(\gN,h)$ of the same length as $c$.
\end{enumerate}
\end{theorem}

\begin{proof}[Proof of Proposition \ref{proposition_geodesic}]
First, we see $\gamma(0) = \bX, \gamma(1) = \bY$ and for $\bM, \bX, \bY \in \sS^n_{++}$,
\begin{align*}
    c(t) &= ((1-t) \bI + t \bM^{-1/2} \bY^{1/2}\bO \bX^{-1/2}\bM^{1/2} ) \bM^{-1/2}\bX^{1/2} \\
    &= ((1-t) \bI + t \bM^{-1/2} \bY\#(\bM\bX^{-1}\bM) \bM^{-1/2} ) \bM^{-1/2}\bX^{1/2},
\end{align*}
where the second equality follows from Lemma \ref{lemma_U}. It is clear that 
\begin{equation*}
    \bM^{-1/2} \bY\#(\bM\bX^{-1}\bM) \bM^{-1/2} \in \sS^{n}_{++},
\end{equation*}
and hence, $c(t)$ lies entirely in ${\rm GL}(n)$ for $t \in [0,1]$ as it is closed under matrix multiplication. Also, $c(t)$ is a line segment, and thus, it is a valid geodesic on $\M_{\rm gl}$. Now, we need to show $c'(0)$ is horizontal. Indeed, we have 
\begin{align*}
    c'(0) &= \bM^{-1/2} \bY^{1/2}\bO - \bM^{-1/2}\bX^{1/2} \\
    &= \bM^{1/2}(\bM^{-1}\bY^{1/2} \bO - \bM^{-1}\bX^{1/2}) \\
    &= \bM^{1/2}(\bM^{-1}\bY^{1/2}\bO \bX^{-1/2}\bM - \bI) \bM^{-1} \bX^{1/2} \\
    &= \bM^{1/2} (\bM^{-1} \bY (\bY^{-1} \bM \bX^{-1}\bM)^{1/2} \bM^{-1} - \bM^{-1}) \bM^{1/2} \bM^{-1/2} \bX^{1/2} \\
    &= \bM^{1/2} \bH \bM^{1/2} \bM^{-1/2} \bX^{1/2},
\end{align*}
where $\bH := \bM^{-1} \bY \#(\bM\bX^{-1}\bM) \bM^{-1} - \bM^{-1} \in \sS^n$. Thus, from the definition of the horizontal space in \eqref{horizontal_space}, we have $c'(0) \in \gH_{\bM^{-1/2}\bX^{1/2}}$. This completes the proof. In addition, from Theorem \ref{theorem_geodesic}, we verify that the square of the Riemannian distance $d^2_{\rm gbw}$ is the same as the straight-line distance on $\M_{\rm gl}$, which is $\| \bM^{-1/2} \bX^{1/2} -\bM^{-1/2} \bY^{1/2}\bO \|^2_{2} = \trace(\bM^{-1}\bX) + \trace(\bM^{-1}\bY) - 2 \trace(\bX^{1/2}\bM^{-1}\bY\bM^{-1}\bX^{1/2})^{1/2}$.
\end{proof}

\subsection{Proof of Proposition \protect\ref{prop_exp_log_map}}

\begin{proof}[Proof of Proposition \ref{prop_exp_log_map}]
We first simplify $(1-t) \bX^{1/2} + t\bY^{1/2} \bO = ((1-t) \bI + t\bY^{1/2}\bU \bX^{-1/2}) \bX^{1/2} = ((1-t) \bM + t\bY \# (\bM\bX^{-1}\bM) )\bM^{-1}\bX^{1/2}$. With $\bK := \bY \# (\bM\bX^{-1}\bM)$, we rewrite the geodesic as 
\begin{align*}
    \gamma(t) &= ( (1-t) \bX^{1/2} + t \bY^{1/2} \bO ) ((1-t)\bX^{1/2} + t \bY^{1/2} \bO )^\top \\
    &= ( (1-t) \bM + t \bK )\bM^{-1} \bX \bM^{-1}  ( (1-t) \bM + t \bK ) \\
    &= \bX + t \bX(\bM^{-1}\bK -\bI) + t (\bK \bM^{-1} - \bI) \bX \\
    &\quad+ t^2 \bM (\bM^{-1} \bK \bM^{-1} - \bM^{-1}) \bX  (\bM^{-1} \bK \bM^{-1} - \bM^{-1}) \bM.
\end{align*}
The first-order derivative is 
\begin{align*}
    \gamma'(0) &= (\bK -\bM) \bM^{-1} \bX + \bX \bM^{-1}(\bK -\bM) = (\bK\bM^{-1} - \bI)\bX + \bX (\bM^{-1}\bK - \bI) \\
    &= \bM (\bM^{-1} \bK \bM^{-1} - \bM^{-1}) \bX + \bX (\bM^{-1} \bK \bM^{-1} - \bM^{-1}) \bM.
\end{align*}
Hence, $\gamma(t)= \bX + t \gamma'(0) + t^2 \bM \L_{\bX, \bM}[\gamma'(0)] \bX \L_{\bX, \bM}[\gamma'(0)] \bM$. The exponential map, therefore, is 
\begin{align*}
    {\rm Exp}_{\bX}(t\bU) &= \bX + t\bU + t^2 \bM \L_{\bX, \bM}[\bU] \bX \L_{\bX, \bM}[\bU] \bM \\
    &= (\bI + t\bM \L_{\bX,\bM}[\bU]) \bX (\bI + t\L_{\bX, \bM}[\bU] \bM) \\
    &= (\bM + t\bM \L_{\bX, \bM}[\bU] \bM)\bM^{-1} \bX \bM^{-1} (\bM + t\bM \L_{\bX, \bM}[\bU]\bM).
\end{align*}
Note that ${\rm Exp}_{\bX}(t\bU) \in \sS_{++}^n$ if $\bM + t\bM \L_{\bX, \bM}[\bU] \bM \in \sS_{++}^n$.

To derive the logarithm map, let $\bY = {\rm Exp}_{\bX}(\bU)$. We first have 
\begin{align*}
    &\bM + \bM\L_{\bX, \bM}[\bU]\bM \\
    &= (\bM^{-1} \bX \bM^{-1})^{-1/2} \left( (\bM^{-1} \bX \bM^{-1})^{1/2} \bY (\bM^{-1} \bX \bM^{-1})^{1/2} \right)^{1/2} (\bM^{-1} \bX \bM^{-1})^{-1/2}.
\end{align*}
and 
\begin{align*}
    &\L_{\bX, \bM}[\bU] = - \bM^{-1} + \bM^{-1}(\bM^{-1} \bX \bM^{-1})^{-1/2} \times \\
    &\qquad \qquad\qquad 
    \left( (\bM^{-1} \bX \bM^{-1})^{1/2} \bY (\bM^{-1} \bX \bM^{-1})^{1/2} \right)^{1/2} 
    (\bM^{-1} \bX \bM^{-1})^{-1/2} \bM^{-1} 
\end{align*}
Hence, let $\bS_\bM := \left( (\bM^{-1} \bX \bM^{-1})^{1/2} \bY (\bM^{-1} \bX \bM^{-1})^{1/2} \right)^{1/2}$. Then,
\begin{align*}
    \bU &= \bX \L_{\bX, \bM}[\bU] \bM + \bM \L_{\bX, \bM}[\bU] \bX \\
    &= 2\{\bX\bM^{-1} (\bM^{-1} \bX \bM^{-1})^{-1/2} \bS_\bM (\bM^{-1} \bX \bM^{-1})^{-1/2}\}_{\rm S} - 2\bX\\
    &= 2\{\bM (\bM^{-1} \bX \bM^{-1})^{1/2} \bS_\bM (\bM^{-1} \bX \bM^{-1})^{-1/2}\}_{\rm S} - 2\bX \\
    &= \bM(\bM^{-1} \bX \bM^{-1} \bY)^{1/2} + (\bY \bM^{-1}\bX \bM^{-1})^{1/2}\bM - 2\bX,
\end{align*}
where we denote $\{\bA\}_{\rm S} := (\bA +\bA^\top)/2$, for $\bA \in \sR^{n \times n}$. This completes the proof.
\end{proof}

\subsection{Proof of Proposition \protect\ref{riem_connection}}

The Levi-Civita connection (or Levi-Civita derivative) of a vector field on a manifold $\M$ is the unique covariant derivative that satisfies (1) torsion-free property, i.e., $\nabla_\xi \eta - \nabla_\eta \xi = \D_\xi \eta - \D_\eta \xi = [\xi, \eta]$ and (2) metric compatibility, i.e., $\nabla_\xi \langle \eta, \xi \rangle_\M = \langle \nabla_\xi \eta, \zeta\rangle_\M + \langle \eta, \nabla_\xi \zeta \rangle_\M$, for any vector fields $\xi, \eta, \zeta$.

\begin{proof}[Proof of Proposition \ref{riem_connection}]
The Levi-Civita connection is derived by applying \cite[MD.3]{lang2012differential}. For any vector fields $\xi, \eta, \zeta$ on $\M_{\rm gbw}$, it satisfies for any $\bX \in \M_{\rm gbw}$,
\begin{align}
    &\langle \nabla_\xi \eta, \L_{\bX, \bM}[\zeta] \rangle_2 \nonumber\\
    = \, &\langle \D_\xi \eta, \L_{\bX, \bM}[\zeta] \rangle_2 + \frac{1}{2} \langle \eta, \D_\xi \L_{\bX, \bM}[\zeta] \rangle_2 + \frac{1}{2} \langle \xi, \D_{\eta} \L_{\bX, \bM}[\zeta] \rangle_2 - \frac{1}{2}\langle \xi, \D_{\zeta} \L_{\bX, \bM} [\eta] \rangle \nonumber\\
    = \, &\langle \D_\xi \eta, \L_{\bX, \bM}[\zeta] \rangle_2 +\frac{1}{2}\langle \xi, \L_{\bX, \bM}\left[\zeta \L_{\bX,\bM}[\eta] \bM + \bM \L_{\bX,\bM}[\eta] \zeta \right] \rangle_2  \nonumber\\
    &-\frac{1}{2} \langle \eta, \L_{\bX, \bM}\left[\xi \L_{\bX,\bM}[\zeta] \bM + \bM \L_{\bX,\bM}[\zeta] \xi \right] \rangle_2 \nonumber\\
    &- \frac{1}{2} \langle \xi , \L_{\bX, \bM}\left[\eta \L_{\bX,\bM}[\zeta] \bM + \bM \L_{\bX,\bM}[\zeta] \eta \right] \rangle_2. \label{eq_1_levi_civita}
\end{align}
The second term of \eqref{eq_1_levi_civita} is rewritten as
\begin{align}
    &\frac{1}{2}\langle \xi, \L_{\bX, \bM}\left[\zeta \L_{\bX,\bM}[\eta] \bM + \bM \L_{\bX,\bM}[\eta] \zeta \right] \rangle_2 \nonumber\\
    = \, &\frac{1}{2}\langle \L_{\bX, \bM}[\eta] \bM \L_{\bX, \bM}[\xi] + \L_{\bX, \bM}[\xi] \bM \L_{\bX, \bM}[\eta] , \zeta \rangle_2 \nonumber\\
    = \, &\langle \{ \L_{\bX, \bM}[\eta] \bM \L_{\bX, \bM}[\xi] \}_{\rm S} , \zeta\rangle_2 \nonumber\\
    = \,&\langle \L_{\bX, \bM}\large[ \bX \{ \L_{\bX, \bM}[\eta] \bM \L_{\bX, \bM}[\xi] \}_{\rm S} \bM + \bM  \{ \L_{\bX, \bM}[\eta] \bM \L_{\bX, \bM}[\xi] \}_{\rm S} \bX  \large], \zeta\rangle_2 \nonumber\\
    = \, &\langle \bX \{ \L_{\bX, \bM}[\eta] \bM \L_{\bX, \bM}[\xi] \}_{\rm S} \bM + \bM  \{ \L_{\bX, \bM}[\eta] \bM \L_{\bX, \bM}[\xi] \}_{\rm S} \bX , \L_{\bX, \bM}[\zeta] \rangle_2 \nonumber\\
    = \, &\langle \{ \bX \L_{\bX, \bM}[\eta]\bM \L_{\bX,\bM}[\xi] \bM + \bX  \L_{\bX, \bM}[\xi]\bM \L_{\bX,\bM}[\eta] \bM  \}_{\rm S} , \L_{\bX, \bM}[\zeta]\rangle_2. \label{temp_1_hess}
\end{align}
Similarly,
\begin{align}
    \frac{1}{2} \langle \eta, \L_{\bX, \bM}\left[\xi \L_{\bX,\bM}[\zeta] \bM + \bM \L_{\bX,\bM}[\zeta] \xi \right] \rangle_2 &= \langle \{ \bM \L_{\bX, \bM}[\eta]\xi \}_{\rm S} , \L_{\bX, \bM}[\zeta] \rangle_2 \label{temp_2_hess}\\
    \frac{1}{2} \langle \xi , \L_{\bX, \bM}\left[\eta \L_{\bX,\bM}[\zeta] \bM + \bM \L_{\bX,\bM}[\zeta] \eta \right] \rangle_2 &= \langle \{ \bM \L_{\bX, \bM}[\xi] \eta \}_{\rm S}, \L_{\bX, \bM}[\zeta] \rangle_2. \label{temp_3_hess}
\end{align}
Applying the results in \eqref{temp_1_hess}, \eqref{temp_2_hess}, and \eqref{temp_3_hess} in \eqref{eq_1_levi_civita}, the proof is complete.
\end{proof}

\subsection{Proof of Proposition \protect\ref{prop_sect_curv}}

We first provide the formal definition of sectional curvature. The curvature tensor $R$ is defined for any $X, Y, Z \in \mathfrak{X}(\M)$, $R(X, Y)Z := \nabla_X \nabla_Y Z - \nabla_Y \nabla_X Z - \nabla_{[X,Y]}Z$, where $[X, Y] = XY - YX$ is the Lie bracket and $\nabla$ is the Levi-Civita connection. At point $p$, $R_p$ defines a $(1,3)$-tensor on $T_p\M$ with $R_p(X(p), Y(p)) Z(p) \in T_p\M$, where $X(p), Y(p), Z(p) \in T_p\M$ are the vector fields evaluated at $p \in \M$.  The sectional curvature is the scalar curvature of a $2$-dimensional subspace of $T_p\M$, given by 
\begin{equation}
    K(u, v) = \frac{g(R_p(u, v)v, u)}{g(u, u) g(v,v) - (g(u,v))^2} \label{sect_cur_def}
\end{equation}
for $u, v \in T_p\M$ as two linearly independent tangent vectors that span the subspace.

Before deriving the sectional curvature, we require the following theorem from Riemannian submersion.

We now derive the sectional curvature of $\M_{\rm gbw}$ based on the following theorem from Riemannian submersion \cite{besse2007einstein,o1966fundamental}. 

\begin{theorem}
\label{curvature_theorem}
Let $\pi: (\tilde{\M}, \tilde{g}) \xrightarrow{} (\M, g)$ be a Riemannian submersion and consider $X, Y$ as smooth vector fields on $\M$. The horizontal lift $\tilde{X}$, $\tilde{Y}$ are unique vector fields on $\tilde{\M}$ such that $\tilde{X}(p), \tilde{Y}(p) \in \gH_p$ and $\D \pi(p)[\tilde{X}(p)] = X({\pi(p)}), \D \pi(p)[\tilde{Y}(p)] = Y({\pi(p)})$ for all $p \in \tilde{\M}$. Then, the sectional curvature is 
\begin{equation*}
    K(X, Y) = \tilde{K}(\tilde{X},\tilde{Y}) + \frac{3}{4} \frac{\| [ \tilde{X}, \tilde{Y} ]^{\gV} \|^2}{Q(\tilde{X}, \tilde{Y})},
\end{equation*}
where $Q(\tilde{X}, \tilde{Y}) = \tilde{g}(\tilde{X}, \tilde{X}) \tilde{g}(\tilde{Y}, \tilde{Y}) - (\tilde{g}(\tilde{X}, \tilde{Y}))^2$. $Z^{\gV}$ is the vertical component of a vector field and $\tilde{K}$ is the sectional curvature of $(\tilde{\M}, \tilde{g})$.
\end{theorem}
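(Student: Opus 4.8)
The statement is O'Neill's curvature formula for Riemannian submersions, so the plan is to reproduce its classical derivation via O'Neill's fundamental $A$-tensor, specialized to the quantity $\tilde g(\tilde R(\tilde X,\tilde Y)\tilde Y,\tilde X)$ appearing in the numerator of the sectional curvature \eqref{sect_cur_def}. Since the sectional curvature depends only on the pointwise values of the spanning vectors, I extend $X,Y$ to vector fields on $\M$ and work throughout with their horizontal lifts $\tilde X,\tilde Y$, the \emph{basic} fields characterized by $\tilde X(p),\tilde Y(p)\in\gH_p$ and $\D\pi(p)[\tilde X(p)]=X(\pi(p))$, $\D\pi(p)[\tilde Y(p)]=Y(\pi(p))$; it then suffices to establish the identity pointwise for such extensions.

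First I would record how the two Levi-Civita connections relate. Applying the Koszul formula on $\tilde\M$ to basic fields and using that $\D\pi$ restricts to a fibrewise isometry $\gH_p\to T_{\pi(p)}\M$, one checks that $(\tilde\nabla_{\tilde X}\tilde Y)^\gH$ is exactly the horizontal lift of $\nabla_X Y$; this is the device that converts total-space computations into base-space ones. Next I isolate the failure of horizontality by setting $A_{\tilde X}\tilde Y:=(\tilde\nabla_{\tilde X}\tilde Y)^\gV$. For any vertical field $V$ a short computation gives
$$\tilde g(A_{\tilde X}\tilde Y,V)=-\tilde g(\tilde Y,\tilde\nabla_{\tilde X}V)=-\tilde g(\tilde Y,\tilde\nabla_V\tilde X)=\tilde g(\tilde X,\tilde\nabla_V\tilde Y)=-\tilde g(A_{\tilde Y}\tilde X,V),$$
where I use that $[\tilde X,V]$ is vertical for basic $\tilde X$ and that $\tilde g(\tilde X,\tilde Y)=g(X,Y)\circ\pi$ is constant along fibres. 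Hence $A$ is skew in its horizontal arguments, and since $\tilde\nabla$ is torsion-free, $[\tilde X,\tilde Y]^\gV=A_{\tilde X}\tilde Y-A_{\tilde Y}\tilde X=2A_{\tilde X}\tilde Y$, giving $A_{\tilde X}\tilde Y=\tfrac12[\tilde X,\tilde Y]^\gV$.

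The computational heart, and the main obstacle, is to expand $\tilde g(\tilde R(\tilde X,\tilde Y)\tilde Y,\tilde X)$ from $\tilde R(\tilde X,\tilde Y)\tilde Y=\tilde\nabla_{\tilde X}\tilde\nabla_{\tilde Y}\tilde Y-\tilde\nabla_{\tilde Y}\tilde\nabla_{\tilde X}\tilde Y-\tilde\nabla_{[\tilde X,\tilde Y]}\tilde Y$, splitting each covariant derivative into its $\gH$ and $\gV$ parts and repeatedly invoking the connection compatibility above. The horizontal contributions reassemble into $g(R(X,Y)Y,X)\circ\pi$, while the vertical cross-terms (the three $A$-bilinear pieces, whose coefficients combine to $3$) collapse to $-3\|A_{\tilde X}\tilde Y\|^2$, yielding O'Neill's equation
$$\tilde g(\tilde R(\tilde X,\tilde Y)\tilde Y,\tilde X)=g(R(X,Y)Y,X)\circ\pi-3\|A_{\tilde X}\tilde Y\|^2.$$
Keeping track of the horizontal/vertical splittings and verifying that the $\gV$-terms pair up correctly is the delicate bookkeeping here; this is where the factor $3$, and hence the eventual $\tfrac34$, originates.

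Finally I would divide both sides by $Q(\tilde X,\tilde Y)$, which equals $Q(X,Y)$ since $\D\pi$ is isometric on $\gH$, obtaining $\tilde K(\tilde X,\tilde Y)=K(X,Y)-3\|A_{\tilde X}\tilde Y\|^2/Q(\tilde X,\tilde Y)$. Rearranging for $K(X,Y)$ and substituting $\|A_{\tilde X}\tilde Y\|^2=\tfrac14\|[\tilde X,\tilde Y]^\gV\|^2$ from the previous step converts the coefficient $3$ into $\tfrac34$ and produces the claimed identity.
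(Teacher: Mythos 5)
The paper never proves this theorem: it is imported verbatim as a known result, with the argument deferred to the cited sources \cite{o1966fundamental,besse2007einstein}, and it is then only \emph{applied} with $\tilde{\M} = \M_{\rm gl}$ flat (so that the $\tilde{K}$ term vanishes) in the subsequent curvature computations for $\M_{\rm gbw}$. So there is no in-paper proof to compare against; what you have reconstructed is precisely the classical O'Neill argument that the cited references contain. Judged on its own terms, your outline is sound and the scaffolding is all correct: the reduction to basic fields, the fact that $(\tilde{\nabla}_{\tilde{X}}\tilde{Y})^{\gH}$ is the horizontal lift of $\nabla_X Y$, the chain of equalities showing $A_{\tilde{X}}\tilde{Y} := (\tilde{\nabla}_{\tilde{X}}\tilde{Y})^{\gV}$ is alternating (your use of the verticality of $[\tilde{X},V]$ for basic $\tilde{X}$ and of the fibrewise constancy of $\tilde{g}(\tilde{X},\tilde{Y}) = g(X,Y)\circ\pi$ is exactly right), the consequence $A_{\tilde{X}}\tilde{Y} = \frac{1}{2}[\tilde{X},\tilde{Y}]^{\gV}$ from torsion-freeness, and the final rearrangement turning the coefficient $3$ into $\frac{3}{4}$, with the sign pointing the correct way (the base is more curved than the total space, consistent with the paper's conclusion that $\M_{\rm gbw}$ has non-negative curvature). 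The one step you assert rather than derive is the core identity $\tilde{g}(\tilde{R}(\tilde{X},\tilde{Y})\tilde{Y},\tilde{X}) = g(R(X,Y)Y,X)\circ\pi - 3\|A_{\tilde{X}}\tilde{Y}\|^2$: expanding the three covariant-derivative terms of $\tilde{R}$ into horizontal and vertical parts and tracking the cross-terms is where essentially all of O'Neill's labour sits, and a self-contained write-up would have to execute that bookkeeping. Since the asserted identity is correct and is exactly the content of the cited theorem, this is a fair sketch of the right proof rather than a flaw, but it should be flagged as the point where your argument leans on the literature rather than standing alone.
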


Directly from Theorem \ref{curvature_theorem} above, we see that the sectional curvature of $\M_{\rm gbw}$ is non-negative, given that $\M_{\rm gl}$ endowed with the flat Euclidean metric has zero curvature. Before we derive the sectional curvature, we need the following lemma to show projection to the horizontal/vertical space on $T_\bP\M_{\rm gl}$.

\begin{lemma}
\label{lemma_projection_horizontal}
Any $\bU \in T_\bP\M_{\rm gl}$ can be projected onto the vertical and horizontal spaces defined in Proposition \ref{prop_riem_submersion}, i.e., $\bU = \bU^{\gV} + \bU^{\gH}$, where
\begin{align*}
    \bU^{\gH} &= \bM^{1/2} \L_{\bM, \bM^{1/2}\bP\bP^\top \bM^{1/2}}[\bM^{1/2}(\bU \bP^\top + \bP \bU^\top)\bM^{1/2}] \bM^{1/2} \bP, \\
    \bU^\gV &= \bM^{-1/2} \L_{\bM^{-1}, (\bM^{1/2} \bP \bP^\top \bM^{1/2})^{-1}}[\bM^{-1/2}(\bU \bP^{-1} - \bP^{-\top} \bU^\top) \bM^{-1/2}] \bM^{-1/2} \bP^{-\top}.
\end{align*}
\end{lemma}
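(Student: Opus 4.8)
The plan is to exploit the explicit parametrizations of the two subspaces established in Proposition \ref{prop_riem_submersion}: every horizontal vector has the form $\bM^{1/2}\bS\bM^{1/2}\bP$ with $\bS\in\sS^n$, and every vertical vector has the form $\bM^{-1/2}\bK\bM^{-1/2}\bP^{-\top}$ with $\bK$ skew-symmetric. Since $T_\bP\M_{\rm gl}=\gH_\bP\oplus\gV_\bP$ is a direct sum and $\bM^{1/2},\bP$ are invertible, there exist unique such $\bS,\bK$ with
\[
\bU = \bM^{1/2}\bS\bM^{1/2}\bP + \bM^{-1/2}\bK\bM^{-1/2}\bP^{-\top},
\]
so the entire task reduces to solving for $\bS$ and $\bK$ and then reading off $\bU^{\gH}$ and $\bU^{\gV}$.

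To recover $\bS$, I would apply to this identity the symmetrizing operation $\bU\mapsto\bM^{1/2}(\bU\bP^\top+\bP\bU^\top)\bM^{1/2}$. Right-multiplying the vertical term by $\bP^\top$ turns $\bP^{-\top}\bP^\top$ into $\bI$, and the combination $\bK+\bK^\top=\bzero$ annihilates the vertical contribution entirely; the horizontal terms collapse to $\bM\bS\,\pi(\bP)+\pi(\bP)\,\bS\bM$, where $\pi(\bP)=\bM^{1/2}\bP\bP^\top\bM^{1/2}$. This is exactly the generalized Lyapunov equation solved by $\L_{\bM,\pi(\bP)}=\L_{\pi(\bP),\bM}$ (the operator is unchanged under swapping its two defining matrices, since $\bX\L\bM+\bM\L\bX=\bM\L\bX+\bX\L\bM$), so $\bS=\L_{\bM,\pi(\bP)}[\bM^{1/2}(\bU\bP^\top+\bP\bU^\top)\bM^{1/2}]$. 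The argument of $\L$ is symmetric, hence $\bS\in\sS^n$, and substituting yields the stated formula for $\bU^{\gH}$.

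Dually, to recover $\bK$, I would apply the antisymmetrizing operation $\bU\mapsto\bM^{-1/2}(\bU\bP^{-1}-\bP^{-\top}\bU^\top)\bM^{-1/2}$. Right-multiplying the horizontal term by $\bP^{-1}$, and correspondingly left-multiplying its transpose by $\bP^{-\top}$, produces two identical copies of $\bM^{1/2}\bS\bM^{1/2}$ that cancel in the difference, killing the horizontal contribution; the surviving terms simplify, via $\bM^{-1/2}\bP^{-\top}\bP^{-1}\bM^{-1/2}=\pi(\bP)^{-1}$ and $\bK^\top=-\bK$, to $\bM^{-1}\bK\,\pi(\bP)^{-1}+\pi(\bP)^{-1}\bK\,\bM^{-1}$. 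This is the generalized Lyapunov equation with parameters $\bM^{-1}$ and $\pi(\bP)^{-1}$, so $\bK=\L_{\bM^{-1},\pi(\bP)^{-1}}[\bM^{-1/2}(\bU\bP^{-1}-\bP^{-\top}\bU^\top)\bM^{-1/2}]$; transposing the defining equation (the right-hand side being skew-symmetric) shows $\bK$ is skew-symmetric, and substituting gives the stated formula for $\bU^{\gV}$.

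I expect the main obstacle to be spotting the two decoupling operations: the whole argument hinges on choosing the correct right-multipliers ($\bP^\top$ versus $\bP^{-1}$) together with the symmetric versus antisymmetric combination, so that exactly one of the two unknowns survives and the residual equation matches the generalized Lyapunov form that $\L$ inverts. Once these are identified, uniqueness of the direct-sum decomposition guarantees that the recovered $\bS$ and $\bK$ reassemble $\bU$, so no separate check that $\bU^{\gH}+\bU^{\gV}=\bU$ is required.
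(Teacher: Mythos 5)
Your proposal is correct and follows essentially the same route as the paper's own proof: decompose $\bU$ via the parametrizations of $\gH_\bP$ and $\gV_\bP$ from Proposition \ref{prop_riem_submersion}, observe that the symmetric combination $\bM^{1/2}(\bU\bP^\top+\bP\bU^\top)\bM^{1/2}$ kills the skew part and reduces to the generalized Lyapunov equation in $\bS$ with parameters $\bM$ and $\pi(\bP)$, and that the antisymmetric combination $\bM^{-1/2}(\bU\bP^{-1}-\bP^{-\top}\bU^\top)\bM^{-1/2}$ kills the symmetric part and yields the Lyapunov equation in $\bK$ with parameters $\bM^{-1}$ and $\pi(\bP)^{-1}$. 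All of your intermediate identities (cancellation of the $\bK$ terms, cancellation of the $\bS$ terms, $\bM^{-1/2}\bP^{-\top}\bP^{-1}\bM^{-1/2}=\pi(\bP)^{-1}$, and skew-symmetry of the recovered $\bK$) check out and coincide with the paper's computations.
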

\begin{proof}
Based on Proposition \ref{prop_riem_submersion}, for $\bU \in T_\bP \M_{\rm gl}$, it can be decomposed as $\bU = \bU^{\gV} + \bU^{\gH} = \bM^{-1/2}\bK \bM^{-1/2} \bP^{-\top} + \bM^{1/2} \bS \bM^{1/2} \bP$, for $\bK$ skew-symmetric and $\bS$ symmetric. From the decomposition, $\bU^\top = - \bP^{-1} \bM^{-1/2} \bK \bM^{-1/2} + \bP^\top \bM^{1/2} \bS \bM^{1/2}$. Thus, we have 
\begin{equation*}
    \bM^{1/2}(\bU \bP^\top + \bP \bU^\top)\bM^{1/2} = \bM \bS \bM^{1/2} \bP \bP^\top \bM^{1/2} + \bM^{1/2} \bP \bP^\top \bM^{1/2} \bS \bM.
\end{equation*}
Hence, $\bS = \L_{\bM, \bM^{1/2}\bP\bP^\top \bM^{1/2}}[\bM^{1/2}(\bU \bP^\top + \bP \bU^\top) \bM^{1/2}]$. Similarly, we also have
\begin{align*}
    &\bM^{-1/2}(\bU \bP^{-1} - \bP^{-\top}\bU^\top) \bM^{-1/2} \\
    &= \bM^{-1} \bK \bM^{-1/2} \bP^{-\top} \bP^{-1} \bM^{-1/2} + \bM^{-1/2}\bP^{-\top} \bP^{-1} \bM^{-1/2}\bK \bM^{-1}
\end{align*}
Thus, $\bK = \L_{\bM^{-1}, (\bM^{1/2} \bP \bP^\top \bM^{1/2})^{-1}}[\bM^{-1/2} (\bU \bP^{-1} - \bP^{-\top} \bU^\top) \bM^{-1/2}]$, which is clearly skew-symmetric given that $\bU \bP^{-1} - \bP^{-\top} \bU^\top$ is skew-symmetric.
\end{proof}

Finally, we proceed to prove the main proposition. 
\begin{proof}[Proof of Proposition \ref{prop_sect_curv}]
We denote $\bS_U := \L_{\bM, \pi(\bP)}[U(\pi(\bP))]$ and similarly for $\bS_V$. Hence we see $\tilde{U}, \tilde{V}$ are the horizontal lift according to the definition. 

To start, it is clear $\tilde{U}(\bP) \in \gH_\bP$ according to the definition of the horizontal space in Proposition \ref{prop_riem_submersion}. Also, we have 
\begin{align*}
    &\D \pi(\bP)[\tilde{U}(\bP)] \\
    &= \bM  \L_{\bM, \pi(\bP)}[U(\pi(\bP))] \bM^{1/2} \bP \bP^\top \bM^{1/2} + \bM^{1/2} \bP \bP^\top \bM^{1/2} \L_{\bM, \pi(\bP)}[U(\pi(\bP))] \bM \\
    &= U(\pi(\bP)), \qquad \forall \, \bP \in \M_{\rm gl}.
\end{align*}

This suggests $\tilde{U} \in \mathfrak{X}(\M_{\rm gl})$ is indeed a horizontal lift of $U \in \mathfrak{X}(\M_{\rm gbw})$. Next we compute the sectional curvature following Theorem \ref{curvature_theorem}.

First, we derive an expression for the Lie bracket. For any two horizontal tangent vectors $\tilde{U}(\bP), \tilde{V}(\bP)$, they can be written as $\tilde{U}(\bP) = \bM^{1/2} \bS_U \bM^{1/2} \bP$ and $\tilde{V}(\bP) = \bM^{1/2} \bS_V \bM^{1/2} \bP$, for arbitrary symmetric matrices $\bS_U, \bS_V$. Therefore,
\begin{align*}
    &[\tilde{U}, \tilde{V}](\bP) \\
    &= \D \tilde{V}(\bP)[\tilde{U}(\bP)] - \D \tilde{U}(\bP)[\tilde{V}(\bP)] \\
    &= \bM^{1/2} \D \bS_V [\tilde{U}(\bP)] \bM^{1/2} \bP + \bM^{1/2} \bS_V \bM^{1/2} \tilde{U}(\bP) - \bM^{1/2} \D \bS_U [\tilde{V}(\bP)] \bM^{1/2} \bP \\
    &\quad- \bM^{1/2} \bS_U \bM^{1/2} \tilde{V}(\bP).
\end{align*}
From Lemma \ref{lemma_projection_horizontal}, to project the result onto the vertical space, we need to first evaluate
\begin{align*}
    &\bM^{-1/2} \big( ([\tilde{U}, \tilde{V}](\bP)) \bP^{-1} - \bP^{-\top}([\tilde{U}, \tilde{V}](\bP))^\top \big) \bM^{-1/2} \\
    = \, & \D\bS_V[\tilde{U}(\bP)] +  \bS_V \bM \bS_U  - \D \bS_U [\tilde{V}(\bP)] - \bS_U \bM \bS_V - \D\bS_V[\tilde{U}(\bP)] - \bS_U \bM \bS_V \\
    &\quad+ \D\bS_V[\tilde{U}(\bP)] + \bS_V\bM \bS_U \\
    = \, &2 (\bS_V \bM \bS_U - \bS_U \bM \bS_V),
\end{align*}
and the vertical projection is 
\begin{equation*}
    ([\tilde{U}, \tilde{V}](\bP))^\gV = \bM^{-1/2} \L_{\bM^{-1}, \pi(\bP)^{-1}}[2 \bS_V \bM \bS_U - 2\bS_U \bM \bS_V] \bM^{-1/2} \bP^{-\top}.
\end{equation*}
To study the trace norm of the vertical projection, we denote 
\begin{equation*}
    \bL := \L_{\bM^{-1}, \pi(\bP)^{-1}}[2\bS_V \bM \bS_U - 2\bS_U \bM\bS_V].
\end{equation*}
Then, from the definition of generalized Lyapunov operator, 
\begin{align*}
    &\bP^\top \bM^{-1/2} \bL \bM^{-1/2} \bP^{-\top} + \bP^{-1} \bM^{-1/2} \bL \bM^{-1/2} \bP \\
    &= 2\bP^\top \bM^{1/2} (\bS_V \bM \bS_U - \bS_U \bM\bS_V) \bM^{1/2} \bP \\
    &= 2 \tilde{V}(\bP)^\top \tilde{U}(\bP) - 2 \tilde{U}(\bP)^\top \tilde{V}(\bP).
\end{align*}
Now, consider the singular value decomposition of $\bP = \bU \bSigma \bV^\top$ with the singular values sorted decreasingly. Denote $\bC := \bV^\top (\tilde{V}(\bP)^\top \tilde{U}(\bP) - \tilde{U}(\bP)^\top \tilde{V}(\bP)) \bV$. This yields
\begin{align}
    2\bC  = \bSigma \bU^\top \bM^{-1/2} \bL \bM^{-1/2} \bU \bSigma^{-1}  + \bSigma^{-1} \bU^\top \bM^{-1/2} \bL \bM^{-1/2} \bU \bSigma. \label{curv_eq_1} 
\end{align}
Denote $\tilde{\bL} := \bU^\top \bM^{-1/2} \bL \bM^{-1/2} \bU$. Result \eqref{curv_eq_1} indicates $(\sigma_i \sigma_j^{-1} + \sigma_i^{-1} \sigma_j)\tilde{\bL}_{ij} = 2\bC_{ij}$.   Hence, $\bM^{-1/2}\bL \bM^{-1/2} = \bU \tilde{\bL} \bU^\top$ and 
\begin{align*}
    \|([\tilde{U}, \tilde{V}](\bP))^\gV \|_2^2 = \| \bU \tilde{\bL} \bU^\top \bP^{-\top} \|_2^2 &= \| \bU \tilde{\bL} \bSigma^{-1} \bV^\top \|_2^2 \\
    &= \| \tilde{\bL} \bSigma^{-1} \|_2^2 \\
    &= \sum_{i,j} \frac{4\bC_{ij}^2}{\sigma_j^2 (\sigma_i \sigma_j^{-1} + \sigma_i^{-1} \sigma_j)^2}.
\end{align*}
Based on Theorem \ref{curvature_theorem}, the proof is complete by noticing $\M_{\rm gl}$ has zero curvature and choosing orthonormal tangent vectors $\tilde{U}(\bP), \tilde{V}(\bP)$ without loss of generality.
\end{proof}

\subsection{Proof of Proposition \protect\ref{prop_bounds_sect_curvature}}

We now compute the bounds for the sectional curvature following \cite{massart2019curvature}. We need the following lemma, which bounds the skew operation of matrix product. 

\begin{lemma}[Lemma 2 in \cite{massart2019curvature}]
\label{lemma_bound_skew}
For arbitrary matrices $\bA, \bB \in \sR^{n \times n}$ with $\| \bA\|_2 = \| \bB\|_2 = 1$, we have $\| \bA^\top \bB - \bB^\top \bA \|^2_2 \leq 2$.
\end{lemma}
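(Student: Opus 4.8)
The plan is to exploit the orthogonal invariance of the quantity, reduce to a diagonal $\bA$, and then close with an elementary pairwise Cauchy--Schwarz estimate. (Here $\|\cdot\|_2$ is the Frobenius norm, $\|\bA\|_2^2 = \trace(\bA^\top\bA)$, consistent with its use elsewhere in the paper.)

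First I would note that for any orthogonal $\bU, \bV \in O(n)$, replacing $(\bA, \bB)$ by $(\bU^\top\bA\bV,\, \bU^\top\bB\bV)$ leaves all three relevant norms unchanged: the Frobenius norms of $\bA$ and $\bB$ are preserved, and $(\bU^\top\bA\bV)^\top(\bU^\top\bB\bV) - (\bU^\top\bB\bV)^\top(\bU^\top\bA\bV) = \bV^\top(\bA^\top\bB - \bB^\top\bA)\bV$, whose Frobenius norm equals that of $\bA^\top\bB - \bB^\top\bA$. Taking $\bA = \bU\bSigma\bV^\top$ to be a singular value decomposition, this lets me assume without loss of generality that $\bA = \bSigma = {\rm diag}(a_1, \ldots, a_n)$ with $a_i \geq 0$ and $\sum_i a_i^2 = \|\bA\|_2^2 = 1$.

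With $\bA$ diagonal the $(i,j)$ entry of $\bA^\top\bB - \bB^\top\bA$ is $a_i\bB_{ij} - a_j\bB_{ji}$, so that
\begin{equation*}
\|\bA^\top\bB - \bB^\top\bA\|_2^2 = \sum_{i,j}(a_i\bB_{ij} - a_j\bB_{ji})^2 = 2\sum_{i<j}(a_i\bB_{ij} - a_j\bB_{ji})^2,
\end{equation*}
where the diagonal terms vanish and the $(i,j)$ and $(j,i)$ contributions coincide. The final step is a pairwise Cauchy--Schwarz bound: for each pair $i<j$, $(a_i\bB_{ij} - a_j\bB_{ji})^2 \leq (a_i^2 + a_j^2)(\bB_{ij}^2 + \bB_{ji}^2) \leq \bB_{ij}^2 + \bB_{ji}^2$, using $a_i^2 + a_j^2 \leq \sum_k a_k^2 = 1$. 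Summing over pairs gives $\|\bA^\top\bB - \bB^\top\bA\|_2^2 \leq 2\sum_{i<j}(\bB_{ij}^2 + \bB_{ji}^2) \leq 2\|\bB\|_2^2 = 2$.

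The point worth flagging is that the crude estimates only give the weaker bound $4$: the triangle inequality yields $\|\bA^\top\bB - \bB^\top\bA\|_2 \leq \|\bA^\top\bB\|_2 + \|\bB^\top\bA\|_2 \leq 2$, and splitting $\bA^\top\bB$ into symmetric and skew parts gives $\|\bA^\top\bB - \bB^\top\bA\|_2^2 = 4\|{\rm skew}(\bA^\top\bB)\|_2^2 \leq 4\|\bA^\top\bB\|_2^2 \leq 4$. The factor-of-two sharpening requires using the two normalizations $\|\bA\|_2 = \|\bB\|_2 = 1$ \emph{jointly} rather than separately, and it is precisely the diagonalization together with the observation $a_i^2 + a_j^2 \leq 1$ that couples them. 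Equality is approached by concentrating $\bA$ and $\bB$ on a common $2\times 2$ block, e.g. $\bA = \tfrac{1}{\sqrt2}\bI_2$ and $\bB = \tfrac{1}{\sqrt2}\begin{psmallmatrix}0 & 1\\ -1 & 0\end{psmallmatrix}$, which saturates the chain of inequalities above.
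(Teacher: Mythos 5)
Your proof is correct. Note that the paper itself does not prove this lemma at all: it is imported verbatim as Lemma~2 of the cited work of Massart, Hendrickx, and Absil, so your argument is filling a gap the paper deliberately leaves to the literature rather than competing with an in-paper proof. Your argument is sound and self-contained: the orthogonal invariance $(\bA,\bB)\mapsto(\bU^\top\bA\bV,\bU^\top\bB\bV)$ does conjugate $\bA^\top\bB-\bB^\top\bA$ by $\bV$ and hence preserves all three Frobenius norms, so the reduction to diagonal $\bA=\bSigma$ is legitimate; the entrywise identity $(\bA^\top\bB-\bB^\top\bA)_{ij}=a_i\bB_{ij}-a_j\bB_{ji}$ is right, the diagonal terms vanish, and the pairwise Cauchy--Schwarz step $(a_i\bB_{ij}-a_j\bB_{ji})^2\le(a_i^2+a_j^2)(\bB_{ij}^2+\bB_{ji}^2)\le\bB_{ij}^2+\bB_{ji}^2$ is exactly where the joint normalization enters, since $a_i^2+a_j^2\le\sum_k a_k^2=1$ for $i\ne j$. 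This matches the SVD-plus-entrywise strategy of the original source, and your closing remarks add value: the observation that naive triangle-inequality or skew-part estimates only give $4$ explains why the reduction is needed, and your $2\times 2$ example $\bA=\tfrac{1}{\sqrt 2}\bI_2$, $\bB=\tfrac{1}{\sqrt 2}\bigl(\begin{smallmatrix}0&1\\-1&0\end{smallmatrix}\bigr)$ attains equality exactly (not merely in the limit), showing the constant $2$ is sharp --- which is also consistent with how the lemma is used in the paper's curvature bound, where the maximum sectional curvature is actually achieved.
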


\begin{proof}[Proof of Proposition \ref{prop_bounds_sect_curvature}]
It is clear when $\bC = \bzero$, the sectional curvature is zero, which happens when for example, $\bS_U = \bM^{-1}, \bS_V = \bS$ for arbitrary symmetric $\bS$. This holds even when $\tilde{U}(\bP), \tilde{V}(\bP)$ are not orthonormal. 

Also, we have
\begin{align*}
    &K(U(\pi(\bP)), V(\pi(\bP))) \\
    &= \sum_{i,j} \frac{3 \bC_{ij}^2}{\sigma^2_j(\sigma_i \sigma_j^{-1} + \sigma_i^{-1} \sigma_j)^2} = \sum_{i,j} \frac{3 \sigma_i^2 \bC_{ij}^2}{(\sigma_i^2 + \sigma_j^2)^2} =  \frac{3\sum_{i > j} (\sigma_i^2 +\sigma_j^2) \bC_{ij}^2}{(\sigma_i^2 + \sigma_j^2)^2} \\
    &= \sum_{i > j} \frac{3\bC_{ij}^2}{\sigma_i^2 + \sigma_j^2} 
    \leq \frac{3}{2(\sigma_n^2+\sigma^2_{n-1})} \| \bC \|_2^2 \leq \frac{3}{\sigma_n^2 + \sigma^2_{n-1}},
\end{align*}
where we notice $\bC$ is skew-symmetric and apply Lemma \ref{lemma_bound_skew}. To verify the choice of $\tilde{U}(\bP)$, $\tilde{V}(\bP)$ that achieves the maximum curvature, we first see
\begin{align*}
    \trace(\tilde{U}(\bP)^\top \tilde{V}(\bP)) &= \frac{\trace(\bSigma^{-2} (\bE_{\{n-1, n-1 \}} - \bE_{\{n,n\}})\bE_{\{n,n-1 \}}  )}{2 (\sigma_n^{-2} + \sigma_{n-1}^{-2}) } \\
    &= \frac{\trace(\bSigma^{-2} (\be_{n-1}\be_n^\top - \be_n \be_{n-1}^\top))}{{\sigma_n^{-2} + \sigma_{n-1}^{-2}}} = 0, \\
    \trace(\tilde{U}(\bP)^\top \tilde{U}(\bP))= \trace(\tilde{V}(\bP)^\top \tilde{V}(\bP)) &= \frac{\trace(\bSigma^{-2} (\be_{n} \be_n^\top + \be_{n-1} \be_{n-1}^\top))}{\sigma_n^{-2} + \sigma_{n-1}^{-2}} =1, 
\end{align*}
which shows $\tilde{U}(\bP), \tilde{V}(\bP)$ are orthonormal. 

Also, we have 
\begin{align*}
    \bC &= \bV^\top (\tilde{V}(\bP)^\top \tilde{U}(\bP) - \tilde{U}(\bP)^\top \tilde{V}(\bP)) \bV \\
    &= \frac{\bE_{\{ n,n-1\}} \bSigma^{-2} (\bE_{\{n-1,n-1 \}} - \bE_{\{ n,n\}}) - (\bE_{\{ n-1,n-1\}} - \bE_{\{ n,n\}})\bSigma^{-2} \bE_{\{ n, n-1\}} }{2(\sigma_n^{-2} + \sigma_{n-1}^{-2})} \\
    &= \be_{n}\be_{n-1}^\top - \be_{n-1} \be_{n}^\top.
\end{align*}
This leads to the maximum sectional curvature as $\sum_{i > j} \frac{3\bC^2_{ij}}{\sigma_i^2 + \sigma_j^2} = \frac{3}{\sigma_n^2 + \sigma_{n-1}^2}$.
\end{proof}

\subsection{Proof of Proposition \protect\ref{operator_inequality_major}}

\begin{proof}[Proof of Proposition \ref{operator_inequality_major}]
From the expression of GBW geodesic, we have
\begin{align*}
    \gamma(t) = \, &(1-t)^2\bX + t^2 \bY + t(1-t) \left(  (\bY \bM^{-1}\bX \bM^{-1})^{1/2}\bM + \bM (\bM^{-1}\bX \bM^{-1} \bY)^{1/2} \right)\\
    = \, &(1-t)^2\bX + t^2 \bY + t(1-t) \Big(  \bY^{1/2} (\bY^{1/2} \bM^{-1}\bX \bM^{-1} \bY^{1/2})^{1/2} \bY^{-1/2} \bM \\
    &+ \bM \bY^{-1/2} (\bY^{1/2} \bM^{-1}\bX \bM^{-1} \bY^{1/2})^{1/2} \bY^{1/2} \Big) \\
    = \, & \bM \bY^{-1/2} \Big( (1-t)^2 (\bY^{1/2} \bM^{-1} \bX \bM^{-1} \bY^{1/2}) + t^2 (\bY^{1/2} \bM^{-1} \bY \bM^{-1} \bY^{1/2})  \\
    &+ t(1-t) \bY^{1/2}\bM^{-1} \bY^{1/2} (\bY^{1/2} \bM^{-1} \bX \bM^{-1} \bY^{1/2})^{1/2} \\
    &+ t(1-t) (\bY^{1/2} \bM^{-1} \bX \bM^{-1} \bY^{1/2})^{1/2} \bY^{1/2} \bM^{-1} \bY^{1/2}
    \Big) \bY^{-1/2} \bM \\
    = \, &\bM \bY^{-1/2} \Big(  (1-t) (\bY^{1/2} \bM^{-1} \bX \bM^{-1} \bY^{1/2})^{1/2} + t (\bY^{1/2} \bM^{-1} \bY^{1/2}) \Big)^2 \bY^{-1/2} \bM \\
    \preceq \, & \bM \bY^{-1/2} \Big( (1-t) (\bY^{1/2} \bM^{-1} \bX \bM^{-1} \bY^{1/2}) + t (\bY^{1/2} \bM^{-1} \bY \bM^{-1} \bY^{1/2}) \Big) \bY^{-1/2} \bM \\
    = \, &(1-t) \bX +t\bY,
\end{align*}
where the second equality follows from the property of geometric mean $(\bA \bB)^{1/2} = \bA(\bA^{-1} \bB)^{1/2} = \bA^{1/2}( \bA^{1/2} \bB \bA^{1/2})^{1/2} \bA^{-1/2}$. 
\end{proof}

\subsection{Proof of Theorem \protect\ref{theorem_barycenter}}

\begin{proof}[Proof of Theorem \ref{theorem_barycenter}]
First we see 
\begin{equation*}
    F(\bA) = \sum_{l=1}^N w_l \trace(\bM^{-1} \bX_l) + \sum_{l=1}^N w_l \trace \Big( \bM^{-1} \bA - 2(\bX_l^{1/2} \bM^{-1} \bA \bM^{-1} \bX_l^{1/2})^{1/2} \Big).
\end{equation*}
Thus to show strict convexity of $F(\bA)$, we only need to show 
\begin{equation*}
    S(\bA) = \trace(\bX_l^{1/2} \bM^{-1} \bA \bM^{-1} \bX_l^{1/2})^{1/2}
\end{equation*}
is strictly concave. This is true because $\trace(\bX)^{1/2}$ is strictly concave. See proof in \cite{bhatia2019bures,bellman1968some}.

By first-order stationarity, we need to find the derivative of $F(\bA)$. First we write $S(\bA) = \trace( (h \circ \phi)(\bA))$, where $h(\bA) = \bA^{1/2}$ and $\phi(\bA) = \bX_l^{1/2} \bM^{-1} \bA \bM^{-1} \bX_l^{1/2}$. Recall that $\D h(\bX)[\bU] = \L_{\bX^{1/2}}[\bU]$ by the derivative of the inverse function law \cite{malago2018wasserstein,bhatia2019bures}. Thus by chain rule, 
\begin{align*}
    \D S(\bA)[\bU] &= \trace \Big( (\D h(\phi(\bA)) \circ \D\phi(\bA))[\bU] \Big)\\
    &= \trace \Big( \L_{ (\bX_l^{1/2} \bM^{-1} \bA \bM^{-1} \bX_l^{1/2})^{1/2} }[\bX_l^{1/2} \bM^{-1} \bU \bM^{-1} \bX_l^{1/2}] \Big) \\
    &= \frac{1}{2}\trace \Big( \bM^{-1} \bX_l^{1/2} (\bX_l^{1/2} \bM^{-1} \bA \bM^{-1} \bX_l^{1/2})^{-1/2} \bX_l^{1/2} \bM^{-1} \bU \Big)\\
    &= \frac{1}{2} \trace \Big( (\bA^{-1} \bM \bX_l^{-1} \bM)^{1/2} \bM^{-1} \bX_l \bM^{-1} \bU \Big) \\
    &= \frac{1}{2} \trace \Big( \bA^{-1} \# (\bM^{-1} \bX_l \bM^{-1}) \bU \Big),
\end{align*}
where the second equality follows from $\trace(\L_\bX[\bU]) = \frac{1}{2} \trace(\bX^{-1} \L_\bX[\bU] \bX + \L_{\bX}[\bU]) = \frac{1}{2}\trace(\bX^{-1} \bU)$ and the third equality is due to \eqref{important_inter_lemma}. 

Hence, $\D F(\bA) [\bU] = \sum_l w_l \trace\Big( \bM^{-1}\bU - \bA^{-1}\# (\bM^{-1} \bX_l\bM^{-1}) \bU \Big)$. From the first-order optimality of the convex function $F(\bA)$, i.e. $\D F(\bA)[\bU] = \bzero$ for all $\bU$, the unique minimizer $A(\bX_{1:N}, \bw)$ satisfies $\bM^{-1} = \sum_{l=1}^N w_l \, \bA^{-1} \# (\bM^{-1} \bX_l \bM^{-1})$, which is equivalent to $\bA^{1/2} \bM^{-1} \bA^{1/2} = \sum_{l=1}^N w_l \, (\bA^{1/2} \bM^{-1} \bX_l \bM^{-1} \bA^{1/2} )^{1/2}$.
\end{proof}

\subsection{Proof of Theorem \protect\ref{fixed_point_iter_GBW}}

\begin{proof}[Proof of Theorem \ref{fixed_point_iter_GBW}]
First by convexity of the matrix square,
\begin{equation*}
    K(\bA) \leq \bM\bA^{-1/2} (\sum_{l=1}^N w_l \bA^{1/2} \bM^{-1} \bX_l \bM^{-1} \bA^{1/2} ) \bA^{-1/2} \bM \leq \sum_{l=1}^N \bX_l. 
\end{equation*}
Hence, $K(\bA)$ is bounded in $\sS_{++}^n$. Also, we claim $F(\bA_{t+1}) \leq F(\bA_t)$, where $F(\bA)$ is the objective function defined in \eqref{gbw_barycenter}. To see this, first we recall from Proposition \ref{prop_ot_plan}, the optimal transport map between two zero-mean Gaussians is $\bT_{\bX \xrightarrow{} \bY} = \bM(\bX^{-1} \# (\bM^{-1} \bY \bM^{-1}))$ with $\bX, \bY$ the respective covariance matrices. Now suppose $\ba \in \sR^n$ is a random Gaussian vector with mean zero and covariance $\bA$ and define $\bx_l = \bT_{\bA \xrightarrow{} \bX_l} \,\ba$. From Proposition \ref{prop_ot_plan}, $\bx_l$ is Gaussian distributed with covariance $\bX_l$ and
\begin{equation*}
    F(\bA) = \sum_{l=1}^N w_l \, d^2_{\rm gbw}(\bA, \bX_l) = \sum_{l=1}^N w_l \, \mathbb{E}\| \ba - \bx_l \|^2_{\bM^{-1}}.
\end{equation*}
In addition, we verify that $\bT_{\bA \xrightarrow{} K(\bA)} = \sum_{l=1}^N w_l \bT_{\bA \xrightarrow{} \bX_l}$. That is, 
\begin{align*}
    \bT_{\bA \xrightarrow{} K(\bA)} &= \bM(\bA^{-1} \# (\bM^{-1} K(\bA) \bM^{-1})) \\
    &= \bM \Big( \bA^{-1} \# \big( \bA^{-1/2} \big( \sum_{l=1}^N w_l \, (\bA^{1/2} \bM^{-1} \bX_l \bM^{-1} \bA^{1/2} )^{1/2} \big)^2 \bA^{-1/2} \big) \Big) \\
    &= \bM \Big( \bA^{-1/2} ( \sum_{l=1}^N w_l (\bA^{1/2} \bM^{-1} \bX_l \bM^{-1} \bA^{1/2})^{1/2} ) \bA^{-1/2} \Big) \\
    &= \sum_{l=1}^N w_l \, \bM \Big( \bA^{-1/2} (\bA^{1/2} \bM^{-1} \bX_l \bM^{-1} \bA^{1/2})^{1/2} \bA^{-1/2} \Big) = \sum_{l=1}^N w_l \bT_{\bA \xrightarrow{} \bX_l}.
\end{align*}
Denote $\bar{\bx} := \sum_{l=1}^N w_l \bx_l$. Then
\begin{equation*}
    d^2_{\rm gbw}(\bA, K(\bA)) = \mathbb{E} \|\ba - \bT_{\bA \xrightarrow{} K(\bA)} \,\ba  \|^2_{\bM^{-1}} = \mathbb{E} \|\ba - \sum_{l=1}^N w_l \bx_l \|^2_{\bM^{-1}} = \mathbb{E}\| \ba - \bar{\bx} \|^2_{\bM^{-1}}.
\end{equation*}
Notice $\bar{\bx} = \bT_{\bA \xrightarrow{} K(\bA)} \,\ba$ is also a zero-mean Gaussian random vector with covariance $K(\bA)$. It follows that $d^2_{\rm gbw} (K(\bA), \bX_l) \leq \mathbb{E}\| \bar{\bx} - \bx_l \|^2_{\bM^{-1}}$. Next recall the variance formula for Euclidean random vector, i.e. ${\rm Var}(\by) = \mathbb{E}\| \by - \mathbb{E}[\by] \|^2 = \mathbb{E} \| \by \|^2 - \| \mathbb{E}[\by] \|^2 = \mathbb{E} \| \bx - \by \|^2 - \| \bx - \mathbb{E}[\by]\|^2$, for arbitrary $\bx$. The analogue under Mahalanobis distance and finite average also holds, i.e., $\sum_{l=1}^N w_l \| \bx_l - \bar{\bx}  \|^2_{\bM^{-1}} = \sum_{l=1}^N w_l \| \ba - \bx_l \|^2_{\bM^{-1}} - \| \ba - \bar{\bx} \|^2_{\bM^{-1}}$. Finally, based on these results, we have 
\begin{align*}
     F(K(\bA)) = \sum_{l=1}^N w_l \, d^2_{\rm gbw}( K(\bA), \bX_l) &\leq \sum_{l=1}^N w_l \, \mathbb{E} \| \bar{\bx} - \bx_l \|^2_{\bM^{-1}} \\
     &= \sum_{l=1}^N w_l \,\mathbb{E} \|\ba - \bx_l \|^2_{\bM^{-1}} - \mathbb{E} \| \ba - \bar{x} \|^2_{\bM^{-1}} \\
     &\leq F(\bA) - d^2_{\rm gbw}(\bA, K(\bA)).
\end{align*}
This suggests $F(K(\bA)) \leq F(\bA)$ and hence together with the boundedness of $K(\bA)$, the sequence $\bA_{t}$ converges. In the limit, we shall observe $F(K(\bA_t)) = F(\bA_t)$ when $t \xrightarrow{} \infty$ and thus $d^2(\bA, K(\bA)) = 0$. From the definition of $K(\bA)$ and the optimality condition, we conclude the limit point is $A(\bX_{1:N}, \bw)$. 
\end{proof}

\section{Additional results and proofs for Section \protect\ref{sec:applications}}

\subsection{Geodesic convexity}
\label{sect_geoedesic_convex}
Geodesic convexity is a generalization of standard convexity in the Euclidean space. It plays a crucial role in Riemannian optimization problems, where for geodesic convex problems, the convergence rates have been shown to be superior in many cases \cite{sra2015conic,zhang2016first}. Consequently, geodesic convexity has been exploited to develop better algorithms for machine learning applications such as Gaussian mixture models \cite{hosseini2020alternative} and metric learning \cite{zadeh2016geometric}. 
Below, we show some interesting classes of objective functions for SPD matrices that are geodesic convex under the GBW geometry.

A \textit{geodesic convex set} $\mathcal{X} \subseteq \M$ requires, for any $x, y \in \mathcal{X}$, the distance minimizing geodesic $\gamma$ connecting the two points lie entirely in the set. A function $f: \mathcal{X} \xrightarrow{} \sR$ is called \textit{geodesic convex} if, for any $x, y \in \mathcal{X}$, it satisfies that, for all $t \in [0,1]$, $f(\gamma(t)) \leq (1-t)f(x) + t f(y)$.



\begin{proposition}
\label{prop_geodesic_convex}
Suppose $\bA \in \sS_{+}^n$, the set of $n\times n$ semi-definite matrices, and let $\lambda^\downarrow : \sS_{++}^n \xrightarrow{} \sR^n_{+}$ be the eigenvalue map that is decreasingly sorted and $h: \sR_{+} \xrightarrow{} \sR$ be a monotonically increasing and convex function. Then, {{the following}} functions $f_1(\bX) = \trace(\bX\bA)$, $f_2(\bX) = \trace(\bX\bA\bX)$, $f_3(\bX) = -\log\det(\bX)$, $f_4(\bX) = \sum_{j=1}^k h(\lambda_j^\downarrow(\bX))$, $k \in [1,n]$, are geodesic convex under the GBW geometry for any choice of $\bM$.
\end{proposition}

\subsection{Proof of Proposition \protect\ref{riem_grad_hess_gbw}}

Given a function $f:\M \xrightarrow{} \sR$, the Riemannian gradient at $x \in \M$, denoted by ${\rm grad}f(x)$, is the unique tangent vector satisfying $\langle {\rm grad}f(x), u \rangle_{x} = \D_u f(x)$, for any $u \in T_x\M$. $\D_u f(x)$ is the directional derivative. Riemannian Hessian at $x$, $\hess f(x): T_x\M \xrightarrow{} T_x\M$ is defined as the Levi-Civita derivative of the Riemannian gradient, i.e., $\nabla \grad f(x)$.

\begin{proof}[Proof of Proposition \ref{riem_grad_hess_gbw}]
For the Riemannian gradient, we require
\begin{equation*}
    \trace(\nabla f(\bX) \bV) = \frac{1}{2}\trace(\L_{\bX, \bM}[\grad f(\bX)] \bV)
\end{equation*}
for any $\bV \in T_\bX\M_{\rm gbw}$. Thus, we have $\grad f(\bX) = \L_{\bX, \bM}^{-1}[2\nabla f(\bX)]= 2\bX \nabla f(\bX) \bM + 2\bM \nabla f(\bX) \bX$. 

For the Riemannian Hessian, we have for any $\bU \in T_\bX\M$
\begin{align}
    &\hess f(\bX)[\bU] = \nabla_{\bU} \grad f(\bX) \nonumber\\
    = \, &\D_{\bU}\grad f(\bX) - \{ \bM \L_{\bX, \bM}[\grad f(\bX)] \bU \}_{\rm S} - \{ \bM \L_{\bX, \bM}[\bU] \grad f(\bX) \}_{\rm S} \nonumber\\
    &+ \{ \bX \L_{\bX, \bM}[\grad f(\bX)] \bM \L_{\bX, \bM}[\bU] \bM + \bX \L_{\bX, \bM}[\bU] \bM \L_{\bX, \bM}[\grad f(\bX)]\bM \}_{\rm S} \nonumber\\
    = \, &\D_{\bU}\grad f(\bX) + \{ 4\bX \{\nabla f(\bX)\bM\L_{\bX, \bM}[\bU] \}_{\rm S}\bM \}_{\rm S} - \{2 \bM \nabla f(\bX) \bU \}_{\rm S} \nonumber\\
    &- \{ \bM \L_{\bX, \bM}[\bU] \grad f(\bX) \}_{\rm S} \label{temp_5_hess},
\end{align}
where we use $\L_{\bX, \bM}[\bX\bU\bM + \bM \bU \bX] = \bU$. Now we compute $\D_\bU \grad f(\bX)$, which is
\begin{align}
    \D_{\bU}\grad f(\bX) &= 2\D_\bU (\bX \nabla f(\bX) \bM + \bM \nabla f(\bX) \bX) \nonumber\\
    &= 2 \bU \nabla f(\bX) \bM + 2 \bX \nabla^2f(\bX)[\bU] \bM + 2 \bM \nabla^2 f(\bX)[\bU] \bX + 2\bM \nabla f(\bX) \bU \nonumber\\
    &= 4\{ \bM \nabla f(\bX) \bU \}_{\rm S} + 4 \{ \bM \nabla^2f(\bX)[\bU] \bX \}_{\rm S}.\label{temp_4_hess}
\end{align}
Combining \eqref{temp_4_hess} with \eqref{temp_5_hess} completes the proof.
\end{proof}

\subsection{Proof of Proposition \protect\ref{prop_geodesic_convex}}

\begin{proof}[Proof of Proposition \ref{prop_geodesic_convex}]
To prove geodesic convexity for $f_1, f_2$, we require a second-order characterization of geodesic convexity. That is, a twice continuously differentiable function $f$ is geodesic convex if $\frac{d^2f(\gamma(t))}{dt^2} \geq 0$ for all $t \in [0,1]$. Now recall from Proposition \ref{proposition_geodesic} and the simplification in \eqref{geodesic_simplify_bw}, the geodesic for GBW shares the same form as BW except for the value of polar factor $\bU$. Nevertheless, the non-negativity of second-order derivatives does not depend on the choice of $\bU$ according to the proof of Proposition 1 in \cite{han2021riemannian}. Hence, we can follow the exact proof to show $f_1$ and $f_2$ are geodesic convex on the GBW geometry. 

For $f_3$, we have
\begin{align*}
    \log\det(\gamma(t)) &= 2\log\det((1-t) \bX^{1/2} + t\bY^{1/2}\bU ) \\
    &= 2\log\det( ((1-t) \bM + t \bY^{1/2} \bU \bX^{-1/2} \bM )  \bM^{-1} \bX^{1/2} ) \\
    &\geq 2(1-t)\log\det(\bM) + 2t\log\det( \bY^{1/2} \bU \bX^{-1/2} \bM ) +  2\log\det(\bM^{-1}) \\
    &\qquad+ 2\log\det(\bX^{1/2}) \\
    &= 2t\log\det(\bY^{1/2}) -2t\log\det(\bX^{1/2}) + 2 \log\det(\bX^{1/2}) \\
    &= (1-t) \log\det(\bX) + t\log\det(\bY),
\end{align*}
where the inequality is due to the concavity of log-det on SPD matrices and from Lemma \ref{lemma_U}, we see $\bY^{1/2} \bU \bX^{-1/2} \bM \succeq \bzero$.

Finally for $f_4$, the geodesic convexity simply follows from the result of $\bX \star_t \bY \preceq (1-t)\bX + t\bY$ in Proposition \ref{operator_inequality_major} and Theorem 2.3 in \cite{sra2015conic}.
\end{proof}


\section{Additional developments on the GBW geometry}

\subsection{Results on geometric interpolation and barycenter}
The geometric mean between symmetric positive definite matrices $\bX$ and $\bY$ under the GBW geometry is the mid-point $\gamma(1/2)$ on the geodesic $\gamma$ that connects $\bX$ to $\bY$. Following the notation in \cite{bhatia2019bures}, we denote the interpolation of the generalized BW geodesic as $\bX \star_t  \bY := \gamma(t)$ derived in Proposition \ref{proposition_geodesic}. We show an operator inequality between the interpolation on GBW and convex combination on the Euclidean space.

\begin{proposition}(Operator inequality)
\label{operator_inequality_major}
For any $\bX, \bY \in \sS_{++}^n$, we have $\bX  \star_t \bY \preceq (1-t) \bX + t\bY$, for $t \in [0,1]$, where $\preceq$ denotes the L\"owner partial order.
\end{proposition}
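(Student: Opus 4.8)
The plan is to work directly from the factored form of the geodesic in \eqref{geodesic_simplify_bw}, writing $\bX \star_t \bY = \gamma(t) = \psi(t)\psi(t)^\top$ with $\psi(t) = (1-t)\bX^{1/2} + t\bY^{1/2}\bO$ and $\bO$ the orthogonal polar factor of $\bY^{1/2}\bM^{-1}\bX^{1/2}$. First I would expand $\psi(t)\psi(t)^\top$, using that $\bO$ is orthogonal so that $\bO\bO^\top = \bI$ and hence $\bY^{1/2}\bO\bO^\top\bY^{1/2} = \bY$. This yields
\[
\gamma(t) = (1-t)^2\bX + t^2\bY + t(1-t)\big(\bX^{1/2}\bO^\top\bY^{1/2} + \bY^{1/2}\bO\bX^{1/2}\big).
\]

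Next I would form the difference against the Euclidean convex combination. Using $(1-t)\bX - (1-t)^2\bX = t(1-t)\bX$ and $t\bY - t^2\bY = t(1-t)\bY$, the cross terms collect and the whole expression factors cleanly as
\[
(1-t)\bX + t\bY - \gamma(t) = t(1-t)\big(\bX + \bY - \bX^{1/2}\bO^\top\bY^{1/2} - \bY^{1/2}\bO\bX^{1/2}\big).
\]

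The key observation --- and essentially the only nontrivial step --- is that the bracketed matrix is a perfect outer product. Since $\bO\bO^\top = \bI$, one checks
\[
\big(\bX^{1/2} - \bY^{1/2}\bO\big)\big(\bX^{1/2} - \bY^{1/2}\bO\big)^\top = \bX + \bY - \bX^{1/2}\bO^\top\bY^{1/2} - \bY^{1/2}\bO\bX^{1/2},
\]
which is manifestly symmetric positive semidefinite. Because $t(1-t)\geq 0$ for $t\in[0,1]$, the right-hand side of the displayed difference is a nonnegative scalar times a PSD matrix, whence $(1-t)\bX + t\bY - \gamma(t)\succeq\bzero$, i.e.\ $\bX \star_t \bY \preceq (1-t)\bX + t\bY$. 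I do not anticipate a genuine obstacle here: the argument is purely algebraic, and the orthogonality of $\bO$ is exactly what drives both the cancellation $\bY^{1/2}\bO\bO^\top\bY^{1/2}=\bY$ and the outer-product factorization. The only care needed is to confirm that the two cross terms are transposes of one another --- indeed $(\bX^{1/2}\bO^\top\bY^{1/2})^\top = \bY^{1/2}\bO\bX^{1/2}$ --- so that the bracket is genuinely symmetric before invoking the L\"owner order.
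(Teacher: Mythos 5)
Your proof is correct, but it takes a genuinely different route from the paper's. The paper starts from the expanded form of the geodesic, uses geometric-mean identities to rewrite $\gamma(t)$ as $\bM \bY^{-1/2}\big((1-t)\bA + t\bB\big)^2 \bY^{-1/2}\bM$ with $\bA = (\bY^{1/2}\bM^{-1}\bX\bM^{-1}\bY^{1/2})^{1/2}$ and $\bB = \bY^{1/2}\bM^{-1}\bY^{1/2}$, and then invokes the operator convexity of the matrix square, $\big((1-t)\bA + t\bB\big)^2 \preceq (1-t)\bA^2 + t\bB^2$, together with the fact that congruence preserves the L\"owner order. You instead work directly with the factored Procrustes form $\gamma(t) = \psi(t)\psi(t)^\top$ from \eqref{geodesic_simplify_bw} and reduce everything to the completely elementary fact that a Gram matrix is positive semidefinite, obtaining the identity
\begin{equation*}
(1-t)\bX + t\bY - \bX \star_t \bY \;=\; t(1-t)\,\big(\bX^{1/2} - \bY^{1/2}\bO\big)\big(\bX^{1/2} - \bY^{1/2}\bO\big)^\top \;\succeq\; \bzero.
\end{equation*}
This buys you several things the paper's argument does not make explicit: an exact formula for the gap rather than just an inequality; an equality characterization (equality for $t\in(0,1)$ iff $\bX^{1/2} = \bY^{1/2}\bO$, i.e.\ $\bX = \bY$); the observation that the result holds for \emph{any} orthogonal $\bO$, not just the polar factor tied to $\bM$; and, by tracing against $\bM^{-1}$ and invoking Proposition \ref{gbw_procrustes}, the quantitative refinement $\trace\big(\bM^{-1}((1-t)\bX + t\bY)\big) - \trace\big(\bM^{-1}(\bX\star_t\bY)\big) = t(1-t)\, d^2_{\rm gbw}(\bX,\bY)$. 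The paper's route, on the other hand, keeps the manipulation in the language of geometric means and congruences, consistent with the surrounding computations, but requires the nontrivial operator-convexity input. Both are valid; yours is the more elementary and more informative argument.
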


An immediate result from this proposition is that $\log\det(\bX \star_t \bY) \leq \log\det((1-t) \bX + t\bY)$. This has implication in the application of Diffusion Tensor Imaging, where the larger determinant of interpolation of SPD matrices indicates the larger diffusion, known as the swelling effect, which is physically undesirable \cite{arsigny2006log,pennec2006riemannian}. Because $\log\det$ is geodesic concave on $\M_{\rm gbw}$ (Proposition \ref{prop_geodesic_convex}), the swelling effect still exists (unlike the affine-invariant or the log-Euclidean geometry), but the level of adverse effect is smaller compared to Euclidean metric.

Given a set of SPD matrices $\{\bX_{l}\}_{l = 1}^N$, the barycenter (or Riemannian center of mass) learning problem is 
\begin{equation}
    \min_{\bA \in \sS_{++}^n } F(\bA) := \sum_{l=1}^N w_l d^2_{\rm gbw}(\bX_l, \bA), \label{gbw_barycenter}
\end{equation}
with $\sum_{l=1}^N w_l = 1$. This is an extension of the Wasserstein barycenter of Gaussian measures \cite{agueh2011barycenters,bhatia2019bures}. Denote the minimizer as $A(\bX_{1:N}, \bw) := \arg\min_{\bA \in \sS_{++}^n} F(\bA)$. We can show, from matrix theory, that the minimizer is unique and is the solution to a specific nonlinear matrix equation. This generalizes the results in \cite{bhatia2019bures}.

\begin{theorem}[Generalization of the result from \cite{bhatia2019bures}]
\label{theorem_barycenter}
The function $F(\bA)$ is strictly (Euclidean) convex in the convex cone of $\sS_{++}^n$, which admits a unique GBW barycenter $A(\bX_{1:N}, \bw)$. The barycenter is the solution to the equation $\bA^{1/2} \bM^{-1} \bA^{1/2} = \sum_{l=1}^N w_l \, (\bA^{1/2} \bM^{-1} \bX_l \bM^{-1} \bA^{1/2} )^{1/2}.$
\end{theorem}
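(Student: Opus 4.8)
The plan is to split the statement into two independent tasks: (i) establishing strict convexity of $F$, which yields existence and uniqueness of the barycenter, and (ii) deriving the stated nonlinear matrix equation as the first-order stationarity condition.

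For (i), I would exploit the congruence reduction already noted after \eqref{gbw_distance}: for every $l$ one has $d^2_{\rm gbw}(\bX_l,\bA) = \dbw^2(\bM^{-1/2}\bX_l\bM^{-1/2},\, \bM^{-1/2}\bA\bM^{-1/2})$. Writing $\tilde{\bX}_l := \bM^{-1/2}\bX_l\bM^{-1/2}$ and $L(\bA) := \bM^{-1/2}\bA\bM^{-1/2}$, the map $L$ is a linear bijection of the convex cone $\sS_{++}^n$ onto itself, so $F(\bA) = \sum_l w_l\,\dbw^2(\tilde{\bX}_l, L(\bA)) = \tilde{F}(L(\bA))$, where $\tilde{F}$ is the ordinary Bures--Wasserstein barycenter objective for the data $\{\tilde{\bX}_l\}$. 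Composition with an invertible linear map preserves strict convexity, so the strict convexity of $\tilde{F}$ on $\sS_{++}^n$ established in \cite{bhatia2019bures} transfers verbatim to $F$. Existence and positive-definiteness likewise transfer: the BW barycenter $\tilde{\bA}^\ast$ of positive definite inputs exists, is unique, and is positive definite, whence $\bA^\ast = \bM^{1/2}\tilde{\bA}^\ast\bM^{1/2}$ is the unique GBW barycenter. (Alternatively, coercivity of $F$ — the linear term $\trace(\bM^{-1}\bA)$ dominates the $O(\|\bA\|^{1/2})$ correction — gives existence directly, and strict convexity gives uniqueness.)

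For (ii), I would compute the Euclidean gradient of each $d^2_{\rm gbw}(\bX_l,\cdot)$ and set the weighted sum to zero. Only the last term of \eqref{gbw_distance} is nontrivial; writing $\bW_l := \bX_l^{1/2}\bM^{-1}\bA\bM^{-1}\bX_l^{1/2}$ and using $\D\bW_l^{1/2} = \L_{\bW_l^{1/2}}[\D\bW_l]$ together with $\trace(\L_{\bW_l^{1/2}}[\bU]) = \tfrac12\trace(\bW_l^{-1/2}\bU)$ (both already used in the paper), the gradient of $d^2_{\rm gbw}(\bX_l,\bA)$ in $\bA$ equals $\bM^{-1} - \bM^{-1}\bX_l^{1/2}\bW_l^{-1/2}\bX_l^{1/2}\bM^{-1}$. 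Since $\sum_l w_l = 1$, conjugating the stationarity relation by $\bM$ gives $\bM = \sum_l w_l\,\bX_l^{1/2}\bW_l^{-1/2}\bX_l^{1/2}$. I would then convert this to the symmetric target form using the non-commutative geometric-mean identity $(\bP\bQ)^{1/2} = \bP^{1/2}(\bP^{1/2}\bQ\bP^{1/2})^{1/2}\bP^{-1/2}$ recorded in the paper: with $\bP=\bX_l$ and $\bQ=\bM^{-1}\bA\bM^{-1}$ it yields $\bX_l^{1/2}\bW_l^{-1/2}\bX_l^{1/2} = \bM\bA^{-1}\bM(\bM^{-1}\bA\bM^{-1}\bX_l)^{1/2}$; left-multiplying the stationarity equation by $\bM^{-1}\bA\bM^{-1}$ then gives $\bM^{-1}\bA = \sum_l w_l (\bM^{-1}\bA\bM^{-1}\bX_l)^{1/2}$, and a similarity transform with $\bR := \bA^{1/2}\bM^{-1}$ (using $\bR^\top\bR\bX_l = \bR^\top(\bR\bX_l\bR^\top)\bR^{-\top}$, hence $(\bR^\top\bR\bX_l)^{1/2} = \bR^\top(\bR\bX_l\bR^\top)^{1/2}\bR^{-\top}$, followed by left-multiplication by $\bA^{-1/2}\bM$ and right-multiplication by $\bM^{-1}\bA^{1/2}$) turns it into $\bA^{1/2}\bM^{-1}\bA^{1/2} = \sum_l w_l(\bA^{1/2}\bM^{-1}\bX_l\bM^{-1}\bA^{1/2})^{1/2}$, as claimed. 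Equivalently, the same condition follows from the Riemannian stationarity $\sum_l w_l \mathrm{Log}_{\bA}(\bX_l) = \bzero$ using the Log map in Table~\ref{table_riem_geometry_compare}.

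The main obstacle is twofold. The genuinely nontrivial ingredient is strict convexity: proving it from scratch is delicate, since each single term $d^2_{\rm gbw}(\bX_l,\cdot)$ is only convex (its correction term is concave but not strictly so in every direction), which is why I route the argument through the established BW result via the congruence $L$ rather than a direct Hessian computation. The remaining difficulty is purely algebraic bookkeeping: the passage from the raw stationarity condition to the symmetric form in the statement relies on repeated use of the similarity identities $\bP^{1/2}(\bP^{1/2}\bQ\bP^{1/2})^{\pm 1/2}\bP^{-1/2} = (\bP\bQ)^{\pm 1/2}$, which must be applied in the right order to keep every factor positive definite — routine but error-prone in the non-commutative setting.
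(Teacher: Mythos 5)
Your proposal is correct. The stationarity half coincides with the paper's own derivation: the paper likewise computes the Euclidean derivative of $\trace(\bX_l^{1/2}\bM^{-1}\bA\bM^{-1}\bX_l^{1/2})^{1/2}$ via the Lyapunov identities $\D(\bX^{1/2})[\bU]=\L_{\bX^{1/2}}[\bU]$ and $\trace(\L_\bX[\bU])=\frac{1}{2}\trace(\bX^{-1}\bU)$, arrives at $\bM^{-1}=\sum_l w_l\,\bA^{-1}\#(\bM^{-1}\bX_l\bM^{-1})$, and conjugates by $\bA^{1/2}$; your version keeps $\bW_l^{-1/2}$ explicit and reshuffles it with similarity identities, which I checked line by line and which lands on the same equation. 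Where you genuinely differ is the convexity half. The paper argues term by term: it reduces strict convexity of $F$ to strict concavity of $S(\bA)=\trace(\bX_l^{1/2}\bM^{-1}\bA\bM^{-1}\bX_l^{1/2})^{1/2}$, which follows from strict concavity of $\trace(\cdot)^{1/2}$ (citing Bellman and Bhatia et al.) composed with an injective linear map. You instead pull the whole objective back to the ordinary BW barycenter functional through the congruence $\bA\mapsto\bM^{-1/2}\bA\bM^{-1/2}$ and invoke the known BW result. Both routes lean on the same reference, but yours buys something the paper's proof silently omits: \emph{existence} of the minimizer (and its positive definiteness), which transfers from the BW case along the congruence, whereas strict convexity alone only yields uniqueness. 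One small inaccuracy in your motivating remark: the worry that each individual term $d^2_{\rm gbw}(\bX_l,\cdot)$ is \emph{only} convex is unfounded --- since $\trace(\cdot)^{1/2}$ is strictly concave and $\bA\mapsto\bX_l^{1/2}\bM^{-1}\bA\bM^{-1}\bX_l^{1/2}$ is linear and injective, each term is already strictly convex, which is precisely the paper's argument --- but this does not affect your proof, which never relies on that claim.
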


Next, we show how to compute the barycenter by a fixed point iteration similar in \cite{bhatia2019bures,alvarez2016fixed}. Let 
$$K(\bA) := \bM \bA^{-1/2} \big( \sum_{l=1}^N w_l \, (\bA^{1/2} \bM^{-1} \bX_l \bM^{-1} \bA^{1/2} )^{1/2} \big)^2 \bA^{-1/2} \bM,$$ 
and perform the iteration update by $\bA_{t+1} = K(\bA_t)$. We can show this update converges to $A(\bX_{1:N}, \bw)$, formalized in the following Theorem.

\begin{theorem}
\label{fixed_point_iter_GBW}
Initialize $\bA_0 \in \sS_{++}^n$ randomly and consider the update $\bA_{t+1} = K(\bA_t)$. Then $\lim_{t \xrightarrow{} \infty} \bA_{t} = A(\bX_{1:N}, \bw)$.
\end{theorem}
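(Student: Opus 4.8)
The plan is to reduce the statement to the already-established convergence of the Bures--Wasserstein (BW) fixed-point iteration by exhibiting a congruence conjugacy. Introduce the linear bijection $\Phi : \sS_{++}^n \to \sS_{++}^n$, $\Phi(\bA) = \bM^{-1/2}\bA\bM^{-1/2}$, which is a homeomorphism with inverse $\Phi^{-1}(\tilde{\bA}) = \bM^{1/2}\tilde{\bA}\bM^{1/2}$, and set $\tilde{\bX}_l := \Phi(\bX_l)$. Since the GBW distance is the BW distance between the $\Phi$-images (as noted after \eqref{gbw_distance}), we have $F(\bA) = \sum_l w_l\, \dbw^2(\tilde{\bX}_l, \Phi(\bA))$, so the GBW barycenter problem is exactly the BW barycenter problem for $\{\tilde{\bX}_l\}$ read through $\Phi$; in particular $A(\bX_{1:N},\bw) = \Phi^{-1}(\tilde{A})$, where $\tilde{A}$ is the BW barycenter of $\{\tilde{\bX}_l\}$.

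The crux is to show that $K$ is conjugate under $\Phi$ to the standard BW iteration map $\tilde{K}(\tilde{\bA}) = \tilde{\bA}^{-1/2}\big(\sum_l w_l(\tilde{\bA}^{1/2}\tilde{\bX}_l\tilde{\bA}^{1/2})^{1/2}\big)^2\tilde{\bA}^{-1/2}$, i.e. that $\Phi(K(\bA)) = \tilde{K}(\Phi(\bA))$ for all $\bA$. Writing $\tilde{\bA} := \Phi(\bA)$ and $\bG := \bM^{-1/2}\bA^{1/2}$, the matrix $\bG$ is invertible with $\bG\bG^\top = \tilde{\bA}$, so its polar decomposition reads $\bG = \tilde{\bA}^{1/2}\bO$ for some orthogonal $\bO$. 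This is exactly what lets the square roots be transported between the two frames: since $\bA^{1/2}\bM^{-1}\bX_l\bM^{-1}\bA^{1/2} = \bG^\top\tilde{\bX}_l\bG = \bO^\top(\tilde{\bA}^{1/2}\tilde{\bX}_l\tilde{\bA}^{1/2})\bO$ and square roots commute with orthogonal conjugation, we get $(\bA^{1/2}\bM^{-1}\bX_l\bM^{-1}\bA^{1/2})^{1/2} = \bO^\top(\tilde{\bA}^{1/2}\tilde{\bX}_l\tilde{\bA}^{1/2})^{1/2}\bO$. Substituting into the definition of $K$ and using $\bA^{-1/2}\bM = \bG^{-1}\bM^{1/2} = \bO^\top\tilde{\bA}^{-1/2}\bM^{1/2}$ together with its transpose $\bM\bA^{-1/2} = \bM^{1/2}\tilde{\bA}^{-1/2}\bO$, all the $\bO$ factors cancel after squaring and sandwiching, yielding exactly $K(\bA) = \bM^{1/2}\tilde{K}(\tilde{\bA})\bM^{1/2} = \Phi^{-1}\big(\tilde{K}(\Phi(\bA))\big)$.

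With the conjugacy in hand, the iterates satisfy $\Phi(\bA_t) = \tilde{K}^{t}(\Phi(\bA_0))$ ($t$-fold composition), i.e. they are precisely the BW iterates for $\{\tilde{\bX}_l\}$ started from $\Phi(\bA_0) \in \sS_{++}^n$. Invoking the convergence of the BW fixed-point iteration \cite{bhatia2019bures,alvarez2016fixed}, $\Phi(\bA_t) \to \tilde{A}$; applying the continuous map $\Phi^{-1}$ gives $\bA_t \to \Phi^{-1}(\tilde{A}) = A(\bX_{1:N},\bw)$, which is the claim.

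I expect the main obstacle to be the conjugacy identity of the second paragraph, specifically justifying the passage of the matrix square roots through the non-symmetric factor $\bG = \bM^{-1/2}\bA^{1/2}$ via its polar decomposition and verifying that all the orthogonal factors cancel after squaring and conjugation; this is the one place where the computation must be carried out explicitly rather than quoted. As a fallback that avoids the reduction entirely, the same conclusion follows by adapting the Lyapunov/descent argument of \cite{alvarez2016fixed} directly: show that $F(K(\bA)) \le F(\bA)$ with equality only at the barycenter, verify that the iterates remain in a compact subset of $\sS_{++}^n$ so that limit points exist, and conclude that the unique limit point is the minimizer using the strict convexity and uniqueness already established above.
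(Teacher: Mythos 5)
Your proposal is correct, but it proves the theorem by a genuinely different route than the paper. You reduce everything to the plain Bures--Wasserstein case via the congruence $\Phi(\bA) = \bM^{-1/2}\bA\bM^{-1/2}$: the identity $d_{\rm gbw}(\bX,\bY) = d_{\rm bw}(\Phi(\bX),\Phi(\bY))$ makes the GBW barycenter of $\{\bX_l\}$ the $\Phi$-preimage of the BW barycenter of $\{\tilde{\bX}_l\}$, and your conjugacy identity $\Phi(K(\bA)) = \tilde{K}(\Phi(\bA))$ (which I checked: with $\bG = \bM^{-1/2}\bA^{1/2} = \tilde{\bA}^{1/2}\bO$ one gets $\bA^{1/2}\bM^{-1}\bX_l\bM^{-1}\bA^{1/2} = \bO^\top\tilde{\bA}^{1/2}\tilde{\bX}_l\tilde{\bA}^{1/2}\bO$ and $\bM\bA^{-1/2} = \bM^{1/2}\tilde{\bA}^{-1/2}\bO$, so all orthogonal factors cancel) shows the GBW iterates are exactly BW iterates in the transformed frame, so convergence follows from the known BW result in \cite{alvarez2016fixed,bhatia2019bures}. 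The paper instead runs the descent argument of \cite{alvarez2016fixed} directly in the GBW setting: it bounds $K(\bA) \preceq \sum_l \bX_l$ by convexity of the matrix square, uses the optimal transport plan of Proposition \ref{prop_ot_plan} to verify $\bT_{\bA \to K(\bA)} = \sum_l w_l\,\bT_{\bA \to \bX_l}$, and derives the quantitative descent inequality $F(K(\bA)) \leq F(\bA) - d^2_{\rm gbw}(\bA, K(\bA))$ via the Mahalanobis variance decomposition, then concludes from boundedness and uniqueness of the stationary point. (Your fallback paragraph is precisely this argument.) The trade-off: your reduction is shorter, modular, and conceptually explains the iteration as a change of frame, but it outsources the analytic core to the cited BW convergence theorem, so its completeness rests on that exact statement (convergence to the barycenter for nondegenerate Gaussians) being available in the cited works — which it is. The paper's route is self-contained in the GBW framework, exercises the transport-plan machinery it has just built, and yields an explicit per-iteration descent estimate in the GBW metric that your reduction does not directly provide.
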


\subsection{Robust GBW distance} \label{sec:rgbw}

In this section, we show that the connection of the GBW distance with a class of projection robust Wasserstein distances between zero-centered Gaussians. This may be of independent interest. 

Robust Wasserstein distances~\cite{paty2019subspace,huang2021projection} may help mitigate the sample complexity of Wasserstein distances, which may grow  exponentially in dimension~\cite{dudley69a,fournier15a,weed19a}. 
Given two $n$-dimensional measures $\mu, \nu$, the projection robust Wasserstein distance~\cite{paty2019subspace,huang2021projection} is computed as follows:
\begin{equation*}
    \gP_{d}(\mu, \nu) = \sup_{\bW: \bW^\top \bW = \bI} \inf_{\gamma \sim \Gamma(\mu, \nu)} \int \| \bW^\top (\bx - \by)\|^2 d\gamma(\bx, \by),
\end{equation*}
where $\bW \in \sR^{n \times d}$ ($d \leq n$) is a projection matrix which is learned over the given samples. When $\mu$ and $\nu$ are zero-centered Gaussians with covariance matrices $\bX$ and $\bY$, respectively, this reduces to 
\begin{align}
    &\gP_d(\mu=\gN(\bzero,\bX), \nu=\gN(\bzero,\bY)) = \label{eqn:zero-centered-gaussians1}\\ 
    &\sup_{\bW:\bW^\top\bW=\bI} \trace(\bW\bW^\top \bX) + \trace(\bW\bW^\top \bY) - 2 \trace(\bX^{1/2} \bW\bW^\top \bY \bW \bW^\top \bX^{1/2})^{1/2}\nonumber
\end{align}
based on Proposition \ref{prop_wasserstein_dist}. If $\bW^{*}$ is an optimal solution of \eqref{eqn:zero-centered-gaussians1}, we also have the following equivalence: $\gP_d(\mu=\gN(\bzero,\bX), \nu=\gN(\bzero,\bY))=d^2_{\rm gbw}(\bX,\bY)$ for $\bM^{-1}=\bW^{*}(\bW^{*})^\top$. Hence, for a specific choice of $\bM^{-1}$, the GBW distance may be interepreted as a projection robust Wasserstein distance between zero-centered Gaussians.

Based on the above discussion, we now define a class of robust Wasserstein distances $d_{\rm rgbw}$ for $\bM^{-1} \succ \bzero$ as 
\begin{align}
    d^2_{\rm rgbw}(\bX, \bY) &= \max_{\bM^{-1} \in \gC} \, d^2_{\rm gbw}(\bX,\bY) = \max_{\bS \in \gC} \, \trace(\bS \bX) + \trace(\bS \bY) - 2\trace(\bX^{1/2} \bS \bY\bS\bX^{1/2})^{1/2} \label{robust_bw_dist}
\end{align}
for a closed convex set $\gC \subseteq \sS_{++}^n$. We emphasize the maximization of $\bS$ over the set $\gC$. 
Below we show that \eqref{robust_bw_dist} is a distance metric. 
\begin{proposition}
\label{prop_robust_GBW_distance}
The robust GBW distance \eqref{robust_bw_dist} in the set $\gC \subseteq \sS_{++}^n$ is a distance metric. 
\end{proposition}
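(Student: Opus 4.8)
The quantity $d_{\rm rgbw}(\bX,\bY)=\max_{\bS\in\gC}\sqrt{\trace(\bS\bX)+\trace(\bS\bY)-2\trace(\bX^{1/2}\bS\bY\bS\bX^{1/2})^{1/2}}$ is a pointwise maximum, over $\bS\in\gC$, of the family of functions $d_{\bS}$, where $d_{\bS}$ denotes the GBW distance \eqref{gbw_distance} with the parameter choice $\bM^{-1}=\bS$. The key observation driving the whole proof is that each member $d_{\bS}$ of this family is \emph{already} a genuine metric: by Proposition \ref{dist_prop} it is the Riemannian distance of $\M_{\rm gbw}$ for the fixed parameter $\bS$, and equivalently it equals $d_{\rm bw}(\bS^{1/2}\bX\bS^{1/2},\bS^{1/2}\bY\bS^{1/2})$, i.e.\ the BW distance pulled back through the bijection $\bX\mapsto\bS^{1/2}\bX\bS^{1/2}$ of $\sS_{++}^n$. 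The plan is therefore to invoke the standard principle that a pointwise maximum of a family of metrics on a common set is again a metric, once finiteness of the maximum is in hand (guaranteed here since $\gC$ is a closed set over which the maximum is attained).

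\textbf{The three easy axioms.} First I would dispatch non-negativity, symmetry, and the identity of indiscernibles, all of which pass through the maximum. Non-negativity is immediate since each $d_{\bS}\ge 0$. Symmetry holds because each $d_{\bS}$ is symmetric (being a Riemannian distance; concretely the cross term is unchanged under $\bX\leftrightarrow\bY$ because $\bS\bX\bS\bY$ and $\bS\bY\bS\bX$ share the same eigenvalues), hence so is their maximum. For the identity of indiscernibles, $\bX=\bY$ gives $d_{\bS}(\bX,\bY)=0$ for every $\bS$, so $d_{\rm rgbw}(\bX,\bX)=0$; conversely $d_{\rm rgbw}(\bX,\bY)=0$ forces $d_{\bS}(\bX,\bY)=0$ for every $\bS\in\gC$, and picking any such $\bS$, the fact that $d_{\bS}$ is a genuine metric (or that $\bS^{1/2}\bX\bS^{1/2}=\bS^{1/2}\bY\bS^{1/2}$ via the BW identification together with the invertibility of $\bS^{1/2}$) yields $\bX=\bY$.

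\textbf{The triangle inequality (main obstacle).} This is the only substantive step, and the difficulty is that the maximizing $\bS$ differs from pair to pair, so one cannot naively add the defining maxima. The trick I would use is to optimize only in the ``hard'' direction: let $\bS^{\star}\in\gC$ attain the maximum defining $d_{\rm rgbw}(\bX,\bZ)$, so that $d_{\rm rgbw}(\bX,\bZ)=d_{\bS^{\star}}(\bX,\bZ)$. Since $d_{\bS^{\star}}$ is a metric it obeys its own triangle inequality, $d_{\bS^{\star}}(\bX,\bZ)\le d_{\bS^{\star}}(\bX,\bY)+d_{\bS^{\star}}(\bY,\bZ)$, and then I bound each term on the right by the corresponding maximum, $d_{\bS^{\star}}(\bX,\bY)\le d_{\rm rgbw}(\bX,\bY)$ and $d_{\bS^{\star}}(\bY,\bZ)\le d_{\rm rgbw}(\bY,\bZ)$, which gives $d_{\rm rgbw}(\bX,\bZ)\le d_{\rm rgbw}(\bX,\bY)+d_{\rm rgbw}(\bY,\bZ)$. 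If $\gC$ is only such that the value is a supremum rather than an attained maximum, the identical argument runs after choosing $\bS$ with $d_{\bS}(\bX,\bZ)>d_{\rm rgbw}(\bX,\bZ)-\epsilon$ and letting $\epsilon\to 0$. I expect this single-maximizer argument to be the crux; everything else is bookkeeping inherited from the already-established metric property of each $d_{\bS}$.
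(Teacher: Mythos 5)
Your proposal is correct and follows essentially the same route as the paper: fix the maximizer $\bS^{\star}\in\gC$ for the pair on the left-hand side of the triangle inequality, invoke the triangle inequality of the GBW metric at that fixed parameter, and bound each resulting term by its own maximum over $\gC$, with the remaining axioms inherited pointwise from each $d_{\bS}$. Your additions (the $\epsilon$-argument when the supremum is not attained, and the explicit converse direction of the identity of indiscernibles) are harmless refinements of the same argument rather than a different approach.
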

\begin{proof}[Proof of Proposition \ref{prop_robust_GBW_distance}]
From (\ref{robust_bw_dist}), we see $d^2_{\rm rgbw}(\bX, \bY) \geq 0$ and is clearly symmetric. The triangle inequality also easily follows as shown below. Let 
\begin{equation}\label{eq:robust_argmax}
   \bS^{*} = \argmax_{\bS \in \gC} d_{\rm gbw}^2(\bX, \bY).
\end{equation}
Therefore, from (\ref{eq:robust_argmax}), we have
\begin{align*}
d_{\rm rgbw}(\bX, \bY) & =  d_{gbw}(\bX, \bY) {\text{ for }} \bS^{*} \\
& \leq d_{gbw}(\bX, \bZ) + d_{gbw}(\bZ, \bY) {\text{ for }} \bS^{*} \text{ as GBW is a distance}\\
& \leq (\max_{\bS_1 \in \gC}   d_{gbw}(\bX, \bZ) {\text{ for }} \bS_1) + (\max_{\bS_2 \in \gC}   d_{gbw}(\bZ, \bY) {\text{ for }} \bS_2) \\
    &= d_{\rm rgbw}(\bX, \bZ) + d_{\rm rgbw}(\bZ, \bY),
\end{align*}
where $\bX$, $\bY$, and $\bZ$ are SPD matrices. Finally, the identity of indiscernibles property is satisfied as the robust GBW distance is based on the GBW distance (which itself satisfies the property). This completes the proof.
\end{proof}

\subsection{GBW geometry and metric learning}
\label{sect_metric_learning}

The problem of metric learning amounts to learning a suitable Mahalanobis (symmetric positive, and possibly, semi-definite) matrix from pairs of similarity and dissimilarity information, e.g., in a classification task \cite{zadeh2016geometric,harandi2017joint,guillaumin2009you,harandi2014manifold,huang2017geometry,huang2015log}.

A particular formulation of interest is based on the objective function proposed in \cite{harandi2017joint}. Specifically, given a set of data-target pairs $\{ \bX_i, t_i\}, \bX_i \in \sS_{++}^n$ and $t_i$ categorical, we define the class adjacency matrix $\bA_{ij} = 1$ if sample $i,j$ are from the same class (i.e., $t_i = t_j$) and $\bA_{ij} = -1$ otherwise. To this end, the objective function is given as 
\begin{equation}\label{eq:metric_learning}
    \min_{\bS \succeq \bzero} \sum_{i,j}^N {\rm log}(1 + {\rm exp}(\bA_{ij} ( \trace(\bS \bX_i) + \trace(\bS \bX_j) - 2\trace(\bX_i^{1/2} \bS \bX_j \bS \bX_i^{1/2})^{1/2} )) ).
\end{equation}
It should be emphasized that the objective function in \eqref{eq:metric_learning} is formulated by directly making use of the BW distance in the objective function of \cite{harandi2017joint}. However, from the definition of the GBW distance \eqref{gbw_distance} between $\bX_i$ and $\bX_j$ and by taking $\bM^{-1} = \bS$, we observe that the problem~\eqref{eq:metric_learning} may be equivalently rewritten as 
\begin{equation*}\label{eq:metric_learning_gbw}
    \min_{\bS \succeq \bzero} \sum_{i,j}^N {\rm log}(1 + {\rm exp}(\bA_{ij}d^2_{\rm gbw}(\bX_i, \bX_j)) ).
\end{equation*}
This suggests that the GBW geometry naturally captures the metric learning properties of the space. Note that $\bS$ can be arbitrary semi-definite matrix and one usually parameterizes $\bS = \bW \bW^\top$, where $\bW$ is a matrix of size $n\times d$. Similar to Section \ref{geometry_aware_pca}, we usually consider $d \ll n$ for practical considerations.


\section{Additional experiments on geometry-aware principal component analysis (PCA)}
\label{geometry_aware_pca}\label{sect_pca}

In this section, we explore the connection of the GBW distance and geometry-aware principal component analysis.

\paragraph{\textbf{Problem formulation:}} Geometry-aware principal component analysis (PCA) for SPD matrices extends the classical PCA to manifolds by maximizing the deviation from the reduced SPD matrices to the reduced barycenter \cite{horev2016geometry,harandi2014manifold,huang2015log}. Using the BW distance, the PCA objective is formulated naturally as the GBW distance between matrices, where $\bM ^{-1}$ is parameterized as $\bW \bW^\top$ with $\bW \in \sR^{n \times d}$. Note that $\bM^{-1}$ is low rank, therefore, does not strictly fall under the generalized metric. Nevertheless, we can make use of the GBW distance expression and substitute low-rank paramterized $\bM^{-1}$.

Consequently, the objective function is
\begin{align*}
    &\max\limits_{\bM^{-1}=\bW \bW^\top: \bW^\top \bW = \bI} \sum_{i = 1}^N d^2_{\rm gbw}(\bX_i ,  \bar{\bX}) = \max_{\bW: \bW^\top \bW = \bI} \sum_{i = 1}^N d^2_{\rm bw}(\bW^\top \bX_i \bW, \bW^\top \bar{\bX} \bW) 
\end{align*}
for samples $ \bX_i \in \sS_{++}^n, i =1,\ldots,N$, where $\bar{\bX} = \argmin \sum_{i=1}^N d^2_{\rm bw}(\bX_i, \bC)$ is the barycenter in the original space. 
The constraint of column orthonormality on $\bW$, i.e., $\bW^\top \bW = \bI$, ensures that $\bW$ projects the covariance matrices onto a $d$-dimensional space. In many practical scenarios, $d$ is often chosen to be much less than $n$, i.e., $d\ll n$.

\paragraph{\textbf{Tasks:}} For the application of geometry-aware PCA, we consider two vision tasks, i.e., image set classification and video-based face recognition. Following the pre-processing steps in \cite{harandi2014manifold,huang2015log}, we treat each vectorized image (or video frame) as a sample in the set and compute the sample covariance to represent the entire image set (or a video). The task is to classify each image set or video represented by a covariance SPD matrix. 

\paragraph{\textbf{Datasets:}} Three real-world datasets are considered, including the MNIST handwritten digits (\texttt{MNIST}) \cite{lecun1998gradient}, ETH-80 object (\texttt{ETH}) \cite{leibe2003analyzing}, and YouTube Celebrities (\texttt{YTC}) \cite{kim2008face} datasets. To process \texttt{MNIST} dataset, we use 42\,000 training samples, and, for each class, we partition the samples into subgroups randomly, each containing 50 images. Then for each subgroup, the covariance matrix is computed. \texttt{ETH} dataset contains image sets of 8 objects, each with 10 subclasses. The 80 subgroups are processed accordingly. \texttt{YTC} is a collection of low-resolution videos of celebrities. Due to the sparsity of the dataset, we only consider 9 persons with video number greater than 15. All images or video frames are resized to $10\times 10$ and the SPD matrix generated as the covariance is of size $100 \times 100$. The statistics of all the considered datasets are in Table~\ref{dataset_statistic}.

\begin{table}[t]
\begin{center}
{\small 
\caption{Summary statistics for \texttt{MNIST}, \texttt{ETH}, \texttt{YTC} datasets}
\label{dataset_statistic}
\setlength{\tabcolsep}{4pt} 
\renewcommand{\arraystretch}{1.3}
\begin{tabular}{c | c c c c c c c c}
\toprule
     & SPD samples & SPD Dim & \# Class      \\
\hline
\texttt{MNIST} & 835 & 100 & 10\\
\texttt{ETH} & 80 & 100 &8 \\
\texttt{YTC} & 194 & 100 &9\\
\bottomrule
\end{tabular}
}
\end{center}
\end{table}

\begin{table}[t]
\begin{center}
{\small 
\caption{Geometry-aware PCA average classification accuracy ($\%$). GBW allows lower dimensional projection with accuracy comparable to that in the original dimension.}
\label{pca_result}
\setlength{\tabcolsep}{4pt} 
\renewcommand{\arraystretch}{1.3}
\begin{tabular}{c | c c c | c c c c c c c}
\toprule
 & \multirow{2}{*}{AI} & \multirow{2}{*}{LE} & \multirow{2}{*}{BW} & \multicolumn{6}{c}{GBW} \\
 &  &  &  & $d=5$ & $d=10$ & $d=30$ & $d=50$ & $d=70$ &$d=90$\\
\hline
\texttt{MNIST} & 100 & 100 & 100 & 99.33 & 100 &100 &100 &100 &100 \\
\texttt{ETH} &76.25  &84.50  &87.75	&80.75	&84.75	&86.75	&88.00	&87.75	&87.75 \\
\texttt{YTC} & 74.70  & 79.00 &76.40 &60.60 &72.40 &76.50 &76.00 & 76.30 & 76.40\\
\bottomrule
\end{tabular}
}
\end{center}
\end{table}

\paragraph{\textbf{Experimental setup:}} As discussed in the above problem formulation, our aim is to find the transformation matrix $\bW \in \sR^{n \times d}$. To validate the effectiveness of dimensionality reduction under the GBW geometry, we perform nearest neighbour classification on the reduced data matrix $\bW^\top \bX_i \bW, i = 1,\ldots,N$. The reduced dimension $d$ is a hyperparameter, and we, therefore, present classification accuracy with $d = \{5, 10, 30, 50, 70, 90\}$. Given that the sample size may be small for some classes, for each class, we take 50\% as the training set and the rest as the test set. Such a random splitting is repeated ten times and we report the average accuracy in Table~\ref{pca_result}, where we also report results with the affine-invariant (AI) and Log-Euclidean (LE)~\cite{arsigny2006log} distances as benchmarks. We use the Riemannian trust region method to solve the maximization problem in Section~\ref{geometry_aware_pca}.

\paragraph{\textbf{Results:}} In Table~\ref{pca_result}, we observe that the classification performance under various choices of $d$ for the GBW distance does not largely degrade, which suggests the global properties of SPD samples can be well-preserved even with a lower-dimensional representation. This also suggests that GBW is a better modeling approach than BW for the geometric PCA problem.

\end{document}